\definecolor{ForestGreen}{rgb}{0.1,0.6,0.05}
\definecolor{EgyptBlue}{rgb}{0.063,0.1,0.6}
\newtheorem{theorem}{Theorem}
\newtheorem{proposition}[theorem]{Proposition}
\newtheorem{lemma}[theorem]{Lemma}
\newtheorem{corollary}[theorem]{Corollary}
\theoremstyle{definition}
\newtheorem{definition}[theorem]{Definition}
\newtheorem{remark}[theorem]{Remark}
\newtheorem{example}[theorem]{Example}
\let\OLDthebibliography\thebibliography
\renewcommand\thebibliography[1]{
	\OLDthebibliography{#1}
	\setlength{\parskip}{1pt}
	\setlength{\itemsep}{1pt plus 0.3ex}
}
\numberwithin{equation}{section}
\numberwithin{theorem}{section}
\numberwithin{equation}{section}
\numberwithin{theorem}{section}
\DeclarePairedDelimiter\abs{\lvert}{\rvert}%
\DeclarePairedDelimiter\norm{\lVert}{\rVert}%
\let\oldnorm\norm
\def\norm{\@ifstar{\oldnorm}{\oldnorm*}}
\DeclareMathOperator*{\esssup}{ess\,sup}
\DeclareMathOperator*{\lowlim}{\underline{lim}}
\newcommand{\al} {\alpha}
\newcommand{\pa} {\partial}
\newcommand{\be} {\beta}
\newcommand{\de} {\delta}
\newcommand{\De} {\Delta}
\newcommand{\ga} {\gamma}
\newcommand{\Ga} {\Gamma}
\newcommand{\om} {\omega}
\newcommand{\Om} {\Omega}
\newcommand{\la} {\lambda}
\newcommand{\Gr} {\nabla}
\newcommand{\no} {\nonumber}
\newcommand{\noi} {\noindent}
\newcommand{\vph} {\varphi}
\newcommand{\ep} {\epsilon}
\newcommand{\ra} {\rightarrow}
\newcommand{\wra} {\rightharpoonup}
\newcommand{\pq}{\mathcal{H}_{p,q}(\Om)}
\newcommand{\pp}{\mathcal{H}_{p,p}(\Om)}
\newcommand{\pqR}{\mathcal{H}_{p,q}(\mathbb{R}^N)}
\newcommand{\dpp}{\D_0^{1,p}}
\newcommand{\Dp}{\D_0^{1,p}(\Om)}
\newcommand{\DN}{\D_0^{1,N}(\Om)}
\newcommand{\intRn}{\displaystyle{\int_{\mathbb{R}^N}}}
\newcommand{\intRnk}{\displaystyle{\int_{\mathbb{R}^{N-k}}}}
\newcommand\restr[2]{{
  \left.\kern-\nulldelimiterspace 
  #1 
  \right|_{#2} 
  }}
\NewDocumentCommand{\intervals}{ m m }{#1 {,} #2}
\def\wp{{W^{1,p}_0(\Om)}}
\def\dpR{{\D^{1,p}_0(\RN)}}
\def\C{{\mathcal C}}
\def\D{{\mathcal D}}
\def\N{{\mathbb N}}
\def\F{{\mathcal F}}
\def\M{{\mathcal M}}
\def\S{\mathbb{S}}
\def\cset{{\subset \subset }}
\def\R{{\mathbb R}}
\def\RN{{\mathbb R}^N}
\def\Rk{{\mathbb R}^k}
\def\RNk{{\mathbb R}^{N-k}}
\def\({{\Big(}}
\def\){{\Big)}}
\def\ws2{{\F_{\frac{N}{2}}}}
\def\cc{{\C_c^\infty}}
\def\c1{{\C_c^1}}
\def\dis{{\displaystyle \int_{\Omega}}}
\def\disone{{\displaystyle \int_{\Omega_1}}}
\def\p{{p^{\prime}}}
\def\q{{q^{\prime}}}
\def\f{{\tilde{f}}}
\def\g{{\tilde{g}}}
\def\h{{\tilde{h}}}
\def\d{{\rm d}}
\def\dr{{\rm d}r}
\def\dS{{\rm d}S}
\def\dl{{\rm d}\lambda}
\def\ds{{\rm d}s}
\def\dt{{\rm d}t}
\def\dz{{\rm d}z}
\def\dx{{\rm d}x}
\def\dy{{\rm d}y}
\def\vpp{{\vph^{\prime}}}
\def\H{{\mathcal{H}}}
\def\h{{\tilde{h}}}
\date{}
\begin{document}
\title{Admissible function spaces for weighted Sobolev inequalities}


\author{T. V. Anoop$^{1*}$}
\address{$^{1*}$corresponding author and also supported by the INSPIRE Research grant DST/INSPIRE/04/2014/001865.}

\author{Nirjan Biswas$^1$}
\address{$^1$ Department of Mathematics, Indian Institute of Technology Madras, Chennai, 600036, India.}
\email{anoop@\text{iitm}.ac.in, nirjaniitm@gmail.com}

\author{Ujjal Das$^2$}
\address{$^2$The Institute of Mathematical Sciences, HBNI, Chennai, 600113, India.}
\email{ujjaldas@imsc.res.in}
\maketitle
\begin{abstract}
Let $k,N\in \mathbb{N}$ with  $1\le k\le N$ and let $\Omega=\Omega_1 \times \Omega_2$ be an open set in $\mathbb{R}^k \times \mathbb{R}^{N-k}$. For $p\in (1,\infty)$ and  $q \in (0,\infty),$ we consider the following weighted Sobolev type inequality:
\begin{align}\label{pq}
   \int_{\Omega} |g_1(y)||g_2(z)| |u(y,z)|^q \, {\rm d}y {\rm d}z   \leq C \left( \int_{\Omega} | \nabla u(y,z) |^p \, {\rm d}y {\rm d}z \right)^{\frac{q}{p}},   \quad \forall \, u \in \mathcal{C}^1_c(\Omega),
\end{align}
for some $C>0$. Depending on the values of $N,k,p,q$ we have identified various pairs  of Lorentz spaces, Lorentz-Zygmund spaces and weighted Lebesgue spaces for $(g_1, g_2)$ so that \eqref{pq} holds. Furthermore, we give a sufficient condition on $g_1,g_2$ so that the best constant in \eqref{pq} is attained in the Beppo-Levi space $\mathcal{D}^{1,p}_0(\Omega)$-the completion of $\mathcal{C}^1_c(\Omega)$ with respect to $\norm{\nabla u}_{L^p(\Omega)}$. 
\end{abstract}

\noindent \textbf{Mathematics Subject Classification (2020)}:  35A23, 46E30, 46E35, 47J30
\\
\textbf{Keywords:} Weighted Sobolev inequalities, Fefferman-Phong type conditions,  Lorentz and Lorentz-Zygmund spaces, Muckenhoupt conditions, finer embeddings of $\Dp$.

\tableofcontents
\section{Introduction and the Main Results}
Let $k,N\in \N$ be such that  $1\le k\le N.$  For an open set $\Om$ in $\R^N$ and $g\in L^1_{loc}(\Om)$, we assume  the following:
\begin{equation}\label{products}
 \left.
\begin{aligned}
    \bullet\;  &\Om = \Om_1 \times \Om_2, \text{ where } \Om_1 \text{ and } \Om_2 \text{ are open sets in } \R^k \text{ and } \R^{N-k} \text{ respectively},\\
    \bullet\;   &g(x)=g_1(y) g_2(z),\; x:=(y,z) \in \Om_1 \times \Om_2, \text{ where } g_1\in L^1_{loc}(\Om_1) \text{ and } g_2\in L^1_{loc}(\Om_2),\\
   \bullet\;    &\textit{If $k=N$, then  $\Om=\Om_1$ and $g=g_1.$}
 \end{aligned}
 \right\}\tag{\bf A}
\end{equation}
  Let $p\in (1,\infty)$ and  $q \in (0,\infty).$ For $\Om$ and $g$ as given in \eqref{products}, we look for sufficient conditions on $g_1,g_2$ so that  the following weighted Sobolev type inequality holds:
\begin{align}\label{p-qHardy}
 \int_{\Om} |g(x)| |u(x)|^q \, \dx   \leq C \left( \int_{\Om} | \Gr u(x) |^p \, \dx \right)^{\frac{q}{p}},   \quad \forall \, u \in \c1(\Om),
\end{align}
for some $C>0.$

\begin{definition} [\bm{$(p,q)$} {\bf-Hardy potential}]

\begin{enumerate}
\item[]\mbox{}
\item  A weight function $g\in L^1_{loc}(\Om)$ satisfying \eqref{p-qHardy} is said to be a {$(p,q)$}{-Hardy potential}. The set of all $(p,q)$- Hardy potentials is denoted by $\pq.$ i.e.,
$$\pq=\{g \in L^1_{loc}(\Om): g \textit{ is a $(p,q)$-Hardy potential }\}.$$ 
    \item If  $g$ is of the form $g(x)=g_1(y)g_2(z)$ for some $g_1$ and $g_2$, then we say $g$ is  a {\bf cylindrical potential}. If $g$ is not a cylindrical potential, then we say $g$ is a {\bf non-cylindrical potential}. 
    \end{enumerate}
\end{definition}

It is not difficult to produce examples of weight functions $g$ in $\pq.$ For example, if $\Om_1$ or $\Om_2$ is bounded in one direction, then the Poincar\'{e} inequality shows that $$L^\infty(\Om)\subset \pp.$$ For $N>p$, let $p^*:=\frac{Np}{N-p}$. Then for $\Om\subset \R^N$ with $N>p,$ the Sobolev inequality 
\begin{align}\label{Sobolev}
 \int_{\Om}  |u(x)|^{p^*} \, \dx   \leq C \left( \int_{\Om} | \Gr u(x) |^p \, \dx \right)^{\frac{p^*}{p}},   \quad \forall  \, u \in \c1(\Om)
\end{align}
ensures  $L^\infty(\Om)\subset \H_{p,p^*}(\Om)$. Furthermore, using the  duality of the Lebesgue spaces together with the H\"{o}lder's inequality will give $L^{\frac{N}{p}}(\Om)\subset \pp.$ 
In the literature, there are many existing results that provide various sufficient conditions for $g$ to be a $(p,q)$-Hardy potential. Before we discuss some of them, we introduce two functions that will be appearing  more frequently in this manuscript. 

\noi \underline{$\bm{\al(p,q)}$:}
 For any $N,p,q,$ we define $$ \al(p,q) := \frac{Np}{N(p-q)+ qp}.$$
    Notice that, $\al(p,p)=\frac{N}{p}$ and for $N>p$,  $$\al(p,q) =\frac{p^*}{p^* - q}=\left(\frac{p^*}{q}\right)', \; \forall\, q\in (0,p^*].$$

    \noi \underline{$\bm{P^*(s)}$:} For $N>p$ and $s\in [0,p],$ we define $$P^*(s) :=\frac{p(N-s)}{N-p}.$$
    Observe that, $P^*(0)=p^*, P^*(p)=p$, and for $N>p$, 
    $$ \al(p,P^*(s))= \frac{N}{s},  \quad P^*\left(\frac{N}{\al(p,q)}\right)=q.$$

\subsection{Various sufficient conditions for $(p,q)$-Hardy Potentials} Now we discuss various sufficient conditions for a $(p,q)$-Hardy Potential available in the literature.  

\noi{\bf (i) Fefferman-Phong type conditions:} In  \cite{Fefferman-Phong}, for  $V\in L^1_{loc}(\R^N)$ with $V\le 0,$ Fefferman-Phong estimated the lowest bound of the Schrodinger operator {$-\Delta + V $}. Their result  ensures that there exists $C>0$ such that $$\int_{\R^N}|\Gr u(x)|^2\dx + \int_{\R^N}V(x)|u(x)|^2 \dx\ge -C E_{big} \int_{\R^N}|u(x)|^2\dx,\; \forall u \in \c1(\R^N),$$  where  
$$E_{big}:=\sup_{Q}\left[\left(\frac{1}{|Q|}\int_{Q}|V(x)|^s\dx\right)^{\frac{1}{s}}-c\;\text{diam}(Q)^{-2}\right] \quad \text{ with } c>0 \text{ and } s>1,$$ 
where $Q$ ranges over cubes in $\R^N$ with sides parallel to the axes, $|Q|$ and  diam$(Q)$ are respectively the measure and the diameter of $Q$. Later, in the definition of $E_{big}$, Fefferman  \cite[Theorem 5]{Fefferman} replaced the cubes with the balls as bellow:
$$
E_{big}:=\sup_{\{x\in \R^N,r>0\}}\left[\left(\frac{1}{|B_r(x)|}\int_{B_r(x)}|V(x)|^s \, \dx\right)^{\frac{1}{s}} - c r^{-2}\right] \text{  with } c>0 \text{ and } s>1.$$
Now if  there exists $ c>0,$ and $s>1$  such that
\begin{equation}\label{FF1}\left(\frac{1}{|Q|}\int_{Q}|V(x)|^s\dx\right)^{\frac{1}{s}}\le c\;\text{diam}(Q)^{-2}, \quad \forall\; Q,
\end{equation}
or 
\begin{equation}\label{FF2}\left(\frac{1}{|B_r(x)|}\int_{B_r(x)}|V(x)|^s\dx\right)^{\frac{1}{s}}\le c\; r^{-2},\quad \forall x\in \R^N, \forall \, r>0,
\end{equation}
then  $E_{big}\le 0$  and hence 
$$\intRn |V(x)|u(x)^2 \, \dx \le \intRn |\Gr u(x)|^2 \, \dx, \quad \forall \, u \in \cc(\RN).$$
In particular, if for some $c>0$ and $s>1$, $V$ satisfies \eqref{FF1} or \eqref{FF2}, then by applying the result of Fefferman-Phong to $-\De-|V|$ we get $V\in \H_{2,2}(\R^N).$

Next, we see  Fefferman-Phong type conditions for general $p$ and $q$  via weighted inequalities for the  fractional integrals.
For $N\ge 3$ and $u\in \C^\infty_c(\R^N),$ the Newtonian potential $\Ga*(\De u)$ of $\De u$   coincide with $u$ (\cite[Theorem 2, Pg-147]{Triebel}), where $\Ga$ is the Fundamental solution of the Laplacian. Thus using the integration by parts we get
\begin{align*}
    u(x)=\frac{1}{N\om_N(2-N)}\int_{\R^N}\frac{\De u(y)}{|x-y|^{N-2}}\dy= \frac{1}{N\om_N} \int_{\R^N}\frac{(x-y)\cdot\Gr  u(y)}{|x-y|^{N}}\dy.
    \end{align*}
    Therefore, 
    \begin{equation}\label{esti_conv}
     |u(x)|\le \frac{1}{N\om_N} \int_{\R^N}\frac{|\Gr  u(y)|}{|x-y|^{N-1}}\dy= \frac{1}{N\om_N} I_1(|\Gr u|),   
    \end{equation}    where $I_{\gamma}$ is the Riesz potential operator defined as $$ I_{\gamma}(f)(x) := \int_{\RN} \frac{|f(y)|}{|x-y|^{N-{\gamma}}} \, \dy; \ \gamma\in (0,N).$$
 From \eqref{esti_conv} it is evident that \eqref{p-qHardy} holds, if the following weighted inequality holds:
 \begin{align}\label{Weighted-Riesz}
\int_{\R^N} \abs{I_1(f)(x)}^q |g(x)| \, \dx \le C \left(\int_{\R^N} f(x)^p  \, \dx \right)^\frac{q}{p}, \quad \forall \, f \in \C_c(\R^N), f \ge 0.
\end{align}
For $N\geq 3$, $1< p \leq q <\infty$,  many authors provided various sufficient conditions on $g,h$ and $\gamma$ so that the following weighted inequality for the fractional integral holds:
\begin{align} \label{two_weight_ineq}
 \intRn  |I_\ga f(x)|^q |g(x)| \, \dx   \leq C \left( \intRn f(x) |h(x)|    \, \dx \right)^{\frac{q}{p}},   \quad \forall \, f \in \c1(\RN), f\ge 0.
\end{align}
For example, see \cite{Perez} ($p=q=2$ and $h \equiv 1$), \cite{Chan_Whed} ($p=q$),  \cite{Karman-Sawyer, LN,Sawyer1} ($p \leq q$).  In particular, for $\gamma=1$ and $h \equiv 1$, their results provide examples of $(p,q)$-Hardy potentials. In \cite[Theorem 1(A)]{Sawyer}, Sawyer-Wheeden have shown that, if there exist $s>1$ and $c>0$ such that
\begin{align}\label{SW}
|Q|^{\frac{\ga}{N}+\frac{1}{q}-\frac{1}{p}} \left( \frac{1}{|Q|} \int_{Q} |g(x)|^s \, \dx \right)^{\frac{1}{qs}} \left( \frac{1}{|Q|} \int_{Q} |h(x)|^{(1-\p)s} \, \dx \right)^{\frac{1}{\p s}} \le c,  \quad  \forall \, Q,
\end{align}
then  \eqref{two_weight_ineq} holds.
In particular, for $\ga =1$ and $h \equiv 1$ the above condition  reads as
\begin{align}\label{SW1}
     |Q|^{\frac{1}{\al(p,q)} -\frac{1}{s}} \left(\int_{Q} |g(x)|^s \, \dx  \right)^{\frac{1}{s}} \le c, \quad  \forall \, Q.  
 \end{align}
Thus $g$ satisfying \eqref{SW1} lies in $\pqR$.
Notice that, for $p=q=2$, \eqref{SW1} coincides with the Fefferman-Phong condition \eqref{FF1}. For recent developments concerning the weighted Sobolev inequalities and Feferman-Phong type conditions, we refer to \cite{FS,Tanaka} and the references therein.

\noi{\bf (ii) Bessel's pair:} Let $\Om=B_R(0)$ with $0<R\leq \infty$, and let $g,h$ be two positive, radial, $C^1$ functions on $\Om$. A pair $(g,h)$ is called  Bessel pair if $(g(r) r')' + h(r) r=0$ has a positive solution on $(0,R)$. In \cite{Ghoussoub2011}, the authors showed that, if $(r^{N-1}g,r^{N-1}h)$ is a  Bessel pair with $\int_0^R \frac{\dr}{r^{N-1}g(r)}=\infty$ and $\int_0^R r^{N-1} h(r) \dr<\infty$, then the following inequality holds:
\begin{align} \label{Gineq}
    \dis |g(x)| |u(x)|^2 \, \dx \le C \dis |h(x)| |\Gr u(x)|^2 \, \dx , \quad \forall \, u \in \c1(\Om).
\end{align}
For further improvements in this direction, we refer to \cite{LLZ1, LLZ} and the references therein.

\noi{\bf (iii) Maz'ya's capacity conditions:} Using $p$-capacity, Maz'ya has provided a necessary and sufficient condition (Theorem 8.5 of \cite{Mazya2}) on $g$ so that \eqref{p-qHardy} holds. Let us recall that, for $F \cset \R^N$, the p-capacity of $F$ with respect to $\Om$ is defined as
$$\text{Cap}_p(F,\Om)= \inf \left\{\int_{\Om} |\nabla u|^p : u \in \mathcal{C}^{1}_c(\Om), u \geq 1 \ \mbox{on} \ F \right\}\,.$$
For $1<p \leq q <\infty$, Maz'ya proved that  $g \in \mathcal{H}_{p,q}(\R^N)$ if and only if 
$$\|g\|_{\mathcal{H}_{p,q}}:=\displaystyle\sup_{F \cset \R^N} \left\{\frac{\displaystyle \int_{F}|g| }{[\text{Cap}_p(F)]^{\frac{q}{p}}}\right\}<\infty \,.$$
It is easy to see  that $\mathcal{H}_{p,q}(\Om) = \left\{g \in L^1_{loc}(\Om): \|g\|_{\mathcal{H}_{p,q}}<\infty \right\}; \ 1<p \leq q <\infty \, $
and $\|g\|_{\mathcal{H}_{p,q}}$ defines a Banach function space norm on $\mathcal{H}_{p,q}(\Om).$

\subsection{More admissible spaces of $(p,q)$-Hardy potentials}

The  results mentioned in the previous subsection  assumes $k=N$ and $q\ge p$ or assumes $k=N$ and $g$ is  radial. 
In this article, we allow the cases $0<q<p$ and $1\le k\le N.$ In these cases, depending on the values of $N,k,p,q$ and the geometry of $\Om,$ we  provide various classes of  function spaces that lie in $\pq$ mainly using two different techniques:
 $$(i)\; \text{symmetrization}, \quad (ii)\; \text{ polar decomposition}.$$
 
\noi $(i)$ The symmetrization method relies on the classical inequalities concerning symmetrization such as  P\'{o}lya-Szeg\"{o} inequality, Hardy-Littlewood inequality and the Muckenhoupt condition \cite{Muckenhoupt} for the one-dimensional weighted Hardy inequalities. 

\noi $(ii)$ The polar decomposition method is based on the use of the fundamental theorem of integration for various functions and  the H\"{o}lder's inequality for various conjugate pairs and conjugate triplets.

 One can also identify some admissible function spaces for  $(p,q)$-Hardy potentials using the embedding of  the Beppo-Levi space $\Dp$-the completion  of  $\c1(\Om)$ with respect to the the norm $\left(\int_{\Om} | \Gr u(x) |^p \, \dx \right)^{\frac{1}{p}}$ (if it is a well defined function space). For example, the Lorentz-Sobolev embedding and Moser-Trudinger embedding  of $\Dp$  provide certain Lebesgue spaces, Orlicz spaces, and  Lorentz spaces that lie in $\pq$ (Theorem \ref{Symmetrization}).  
 Notice that, if $\Om$ is  bounded in one direction, then $\Dp$ coincides with the classical Sobolev space $\wp.$   Unfortunately, for $N\le p$, $\dpR$ is not a function space. In fact, H\"{o}rmander-Lions in \cite{Hormander} showed that $\mathcal{D}^{1,2}_0(\R^2)$ contains objects that do not belong to even in the space of distributions. To ensure that $\Dp$ is a well defined function space,  we need to make some restrictions on $\Om.$ For $r \ge 0$, a open ball and a closed ball centred at $x$ with the  radius $r$ are denoted by  $B_r(x)$ and  $B_r[x]$ respectively.  Henceforth,  we make the following assumptions on the open set $\Om$:   
\begin{equation}\label{domain}\tag{\bf B}
    \begin{aligned}
  {\bf N>p:} &\quad\Om \text{ is any open set in } \R^N, \\
  {\bf N=p:}  &\quad \Om \subset \R^N \setminus B_a[x] \text{ for some } x \in \R^N, \text{ with } a>0, \\
  {\bf N<p:}  &\quad \Om \subset \R^N \setminus \{x\} \text{ for some } x \in \R^N.
\end{aligned}
\end{equation}
For $N \le p$ and for $\Om$ as given in \eqref{domain},  then we will show that $\Dp$ is always a well defined function space and it is continuously embedded in $L^p_{loc}(\Om)$ (Corollary \ref{function space} and see also \cite[Corollary 2.4]{ADS}).

\subsubsection{\bf{The $(p,q)$-Hardy potentials ($k=N$)}}
In this case, we have $\Om=\Om_1$ and $g=g_1.$ If $N>p$,  then using  \eqref{Sobolev} and the H\"{o}lder's inequality,  it is easy to  see that, for each $q\in (0,p^*],$  $$L^{\al(p,q)}(\Om)\subset \pq.$$
Furthermore, the classical Hardy-Sobolev inequality
\begin{align}\label{HS classic}
\int_{\Om} \frac{\abs{u(x)}^p}{\abs{x}^p}  \, \dx \le \left( \frac{p}{N-p} \right)^p \int_{\Om} \abs{\Gr u(x)}^p \, \dx, \quad \forall  \, u \in \c1(\Om)
\end{align} 
ensures that the Hardy potential $\frac{1}{|x|^p}$ belongs to $\mathcal{H}_{p,p}(\Om).$ Notice that, if $\Om$ contains the origin, then $\frac{1}{|x|^p} $ does not lie in any Lebesgue space. The inequality \eqref{HS classic} has been improved  by adding lower order radial weights to $\frac{1}{\abs{x}^p}$, for example, see \cite{Adimurthi, Brezis1, Tertikas, GM} and the references therein. Indeed, all these improved Hardy-Sobolev inequalities provide examples of radial weights in $\pp$. The improvements of \eqref{HS classic} involving the distance functions are available in \cite{LLZ1, LLZ, Lehrback}. Many authors are also interested to extend \eqref{HS classic} by considering more general class of weight functions in place of $\frac{1}{\abs{x}^p}$. The following version of Caffarelli-Kohn-Nirenberg  inequality \cite{CKN} extends  \eqref{HS classic} for $q \in [p,p^*]$:  \begin{align}\label{CKN classic}
   \intRn \frac{\abs{u(x)}^q}{\abs{x}^{\frac{N}{\al(p,q)}}}  \; \dx \leq C \left( \intRn \abs{\Gr u(x)}^p \, \dx \right)^{\frac{q}{p}}, \quad  \forall  \, u \in \c1(\RN).
\end{align}
 Thus $g(x)=|x|^{-\frac{N}{\al(p,q)}} \in \pqR$ for $q\in [p,p^*]$.
 \begin{remark} \label{nessq>=p}
 For $q\in [p,p^*]$, set $s=\frac{N}{\al(p,q)}$. Then \eqref{CKN classic} takes the following form:
  $$ \intRn  \frac{\abs{u(x)}^{P^*(s)}}{\abs{x}^{s}}  \; \dx \leq C \left( \intRn \abs{\Gr u(x)}^p \, \dx \right)^{\frac{N-s}{N-p}}, \quad  \forall  \, u \in \c1(\RN), $$
  with $s \in [0,p]$. The above inequality is also known as the classical Caffarelli-Kohn-Nirenberg  inequality. In \cite[Lemma 3.1]{GY}, the authors have shown that the conditions $q= P^*(s)$ and $s\in [0,p]$ are necessary for $|x|^{-s} \in \pqR$.
 \end{remark}
 
Observe that, $g=|x|^{-\frac{N}{\al(p,q)}}$ lies in the Lorentz space $L^{\al(p,q), \infty}(\Om)$ (see Example \ref{ex rearrangement}). In \cite{Visciglia}, for $p=2$ and $N> 2$, using the Lorentz-Sobolev embedding the authors have shown that $L^{\al(2,q), \infty}(\Om) \subset\pq $ for $q\in [2,2^*]$. For $N=p$ and $\Om=B_1(0)$, Edmunds-Triebel in \cite{ET} obtained an analogue of \eqref{HS classic}, namely:
\begin{align}\label{critical Hardy}
\int_{\Om} \frac{\abs{u(x)}^N}{(\abs{x}(\log(\frac{e}{|x|}))^N}  \, \dx \le \left(\frac{N}{N-1}\right)^N \int_{\Om} \abs{\Gr u(x)}^N \, \dx, \quad \forall  \, u \in \c1(\Om).    
\end{align}
Thus $(|x| (\log (\frac{e}{|x|})))^{-N} \in \mathcal{H}_{N,N}(B_1(0))$. It is not hard to verify that $g=(|x| (\log (\frac{e}{|x|}) ))^{-N}$ lies in the Lorentz-Zygmund space $L^{1, \infty;N}(B_1(0))$ (see Example \ref{ex rearrangement}). Indeed, following the same treatment as in \cite{Anoop1}, one can show that $L^{1, \infty;N}(\Om) \subset \mathcal{H}_{N,N}(\Om)$. Our first theorem improve all these results to general $N$, $p$ and also to a bigger range for $q$.
\begin{theorem}\label{Symmetrization}
Let $\Om$ be an open set in $\R^N.$ Let $ \ga=\frac{p}{p-q}$ for $q\in (0,p)$ and $\ga=\infty$ for $q\ge p.$  
\begin{enumerate}[(i)]
\item  Let $N>p.$ Then $$X:=L^{\al(p,q),\ga}(\Om) \subset \pq, \; \forall \, q \in [0,p^*]. $$ 
    \item Let $N=p$ and $\Om$ be bounded. Then 
    $$ X:= \left\{ \begin{array}{ll}
    L^{1,\ga;\frac{q}{p'}}(\Om), & \,  q\in (0,1) \cup [p, \infty);  \\
    L^{1, \ga; q-1}(\Om), &  \,  q\in [1,p),
\end{array}\right\} \subset \pq. $$
    \item Let $N < p$ and $\Om$ be bounded in one direction. Then  $$X:=L^1(\Om) \subset \pq, \; \forall \,  q\in [0,\infty).$$ 
 \end{enumerate}   
  Furthermore, for $g \in X$, there exists $C=C(N,p,q)>0$ so that 
 \begin{align}\label{eqn:Lorentz}
 \int_{\Om} |g(x)| |u(x)|^q \, \dx  \leq C\norm{g}_{X} \left( \int_{\Om} | \Gr u(x) |^p \, \dx \right)^{\frac{q}{p}},   \quad \forall  \, u \in \c1(\Om).
\end{align}

\end{theorem}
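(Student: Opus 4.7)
The plan is to prove Cases (i) and (ii) together by the classical symmetrization-plus-one-dimensional-Hardy scheme, and Case (iii) directly by Morrey's embedding. For Cases (i) and (ii), the first step will be the Hardy--Littlewood rearrangement inequality,
\[ \dis |g(x)||u(x)|^q\,\dx \le \int_0^{|\Om|} g^*(s)\,u^*(s)^q\,\ds. \]
Next, I would apply the Pólya--Szegő principle to the Schwarz symmetrization $u^\star$ of $u$, combined with the co-area identity for radially decreasing functions, to obtain
\[ N^p\om_N^{p/N}\int_0^{|\Om|}(-u^{*\prime}(s))^p\,s^{p(N-1)/N}\,\ds \le \dis |\Gr u(x)|^p\,\dx. \]
Since $u \in \c1(\Om)$ forces $u^*(|\Om|) = 0$, writing $u^*(s) = \int_s^{|\Om|}(-u^{*\prime}(t))\,\dt$ reduces \eqref{eqn:Lorentz} to the one-dimensional weighted Hardy inequality
\[ \int_0^{|\Om|} g^*(s)\Bigl(\int_s^{|\Om|} h(t)\,\dt\Bigr)^q \ds \le C\|g\|_X\Bigl(\int_0^{|\Om|} h(t)^p\,t^{p(N-1)/N}\,\dt\Bigr)^{q/p} \]
for all $h \ge 0$, which I intend to close by Muckenhoupt's characterization when $q \ge p$ and its Sinnamon--Maz'ya variant when $q < p$.

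For Case (i), $N > p$, the weight $v(t) = t^{p(N-1)/N}$ satisfies $\int_r^\infty v^{1-\p}(t)\,\dt \asymp r^{-(N-p)/(N(p-1))}$, and hence Muckenhoupt's supremum condition will read $\sup_{r>0}(\int_0^r g^*(s)\,\ds)^{1/q}\,r^{-(N-p)/(Np)} < \infty$. Here I plan to use the algebraic identity $q(N-p)/(Np) = 1 - 1/\al(p,q)$, which is immediate from the definition of $\al(p,q)$. That identity shows the Muckenhoupt condition is exactly the weak-Lorentz characterization $g \in L^{\al(p,q),\infty}(\Om)$, matching the case $\ga = \infty$ of the statement. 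For $q < p$, an analogous computation with the Sinnamon integral criterion produces the same first index $\al(p,q)$ with finite secondary exponent $\ga = p/(p-q)$.

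For Case (ii), $N = p$, the weight becomes critical: $v(t) = t^{p-1}$ gives $v^{1-\p}(t) = t^{-1}$, so $\int_r^T v^{1-\p}(t)\,\dt = \log(T/r)$ with $T = |\Om|$. The Muckenhoupt and Sinnamon conditions will therefore become logarithmic growth restrictions on $\int_0^r g^*$, which I expect to reinterpret as Lorentz--Zygmund norms via the equivalent formulation through the maximal rearrangement $g^{**}$. The split of the statement into the subranges $q \in (0,1)$, $q \in [1,p)$, and $q \in [p,\infty)$ should mirror the transitions in the leading logarithmic exponent of Sinnamon's integral condition, yielding Zygmund exponent $q/\p$ in the outer ranges and $q-1$ in the middle. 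I expect the hard part of the proof to be here: carefully identifying the correct Zygmund exponent across the three subranges and matching the Muckenhoupt/Sinnamon quantities to the Lorentz--Zygmund quasi-norms.

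For Case (iii), $N < p$ with $\Om$ bounded in one direction, I would bypass symmetrization altogether. Extending $u \in \c1(\Om)$ by zero to $\RN$, the Morrey embedding $W^{1,p}(\RN) \embd L^\infty(\RN)$ (valid since $p > N$) combined with the Poincaré inequality available in any slab will give $\|u\|_{L^\infty(\Om)} \le C\|\Gr u\|_{L^p(\Om)}$. The conclusion then follows in one line,
\[ \dis |g(x)||u(x)|^q\,\dx \le \|u\|_{L^\infty(\Om)}^q\,\|g\|_{L^1(\Om)} \le C\,\|g\|_{L^1(\Om)}\,\|\Gr u\|_{L^p(\Om)}^q, \]
which is \eqref{eqn:Lorentz} with $X = L^1(\Om)$.
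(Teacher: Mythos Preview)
Your proposal is correct and follows essentially the same approach as the paper: the paper reduces (i) and (ii) to a one-dimensional weighted Hardy inequality via the Hardy--Littlewood and P\'{o}lya--Szeg\"{o} inequalities, and then verifies the Muckenhoupt condition (the $A_3$ supremum for $q\ge p$ and the integral conditions of Maz'ja/Sinnamon type for $0<q<p$, split exactly at $q=1$ as you anticipate), while (iii) is dispatched by the one-line $L^\infty$ bound. Your algebraic identity $q(N-p)/(Np)=1-1/\al(p,q)$ is precisely the computation used in the paper's Proposition~A.2, and your caution about the Zygmund exponents in (ii) is well placed but will resolve mechanically once you insert $\int_s^{|\Om|}t^{-1}\,\dt=\log(|\Om|/s)$ into the three Muckenhoupt quantities.
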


Our proof for the above theorem is based on the embeddings of $\Dp$ into various function spaces with respect to the values of $N$ and $p$. For $N>p$, in \cite{ONeil} O'Neil proved that $\Dp$ is embedded in the Lorentz space $L^{p^*,p}(\RN).$  For $N=p$ and  $\Om$ bounded,  $\Dp$  is embedded in the  Lorentz-Zygmund space $L^{\infty, N ; -1}(\Om)$, proved independently  by  Hansson \cite{Hansson} ($N=p=2$), Brezis-Wainger \cite[Lemma 1]{BW} (for $N=p$). For $N<p$ and  $\Om$ bounded in one direction, $\Dp$ coincides with $W^{1,p}_0(\Om)$ and hence embedded into the Lebesgue space $L^\infty(\Om)$. In the appendix, we give simple alternate proofs for all these embeddings (Theorem \ref{embedding}) using the Muckenhoupt condition for the one-dimensional weighted Hardy inequalities and certain classical inequalities such as P\'{o}lya-Szeg\"{o} inequality, Hardy-Littlewood inequality. We also prove that,  if $g$ is radial, radially decreasing, then for $g$ to be a $(p,q)$-Hardy potential, it is necessary that $g$ lies in the spaces given in the above theorem (see Proposition \ref{necessarycond}).

Next we produce another class of $(p,q)$-Hardy potentials on certain symmetric open sets  via the polar decomposition method. 

\begin{definition}[\bf The sectorial sets]
Let $1\le k\le N$ and $S$ be an open subset of $\S^{k-1}$ and $a,b \in[0,\infty]$ with $a<b$. Then consider the open set \begin{equation}\label{def:sector}
    \Om_{a,b,S}= int\left(\left\{x\in \Rk: a \leq |x| < b, 
\frac{x}{|x|}\in S \ \text{if} \ x \neq 0 \right\}\right),
\end{equation} where $int(A)$ denotes the interior of a set $A$. 
\end{definition}

 Notice that, $0\in \Om_{a,b,S},$ only if  $a=0, S=\mathbb{S}^{k-1}$. If $S_1=\{x=(x_1,...,x_k) \in \mathbb{S}^{k-1}:x_1>0\}$, then   $\Om_{0,\infty,S_1}$ becomes the half space $\R^k_+=\left\{x\in \R^k:x_1>0\right\}.$  Next we associate a radial function to a $L^1_{loc}(\Om_{a,b,S})$ function via the notion of radial majorant.
 
\begin{definition}[\bf{The radial majorant}]
For $f\in L^1_{loc}(\Om_{a,b,S}),$ we define the radial majorant of $f$ as below: 
\begin{equation}\label{def:majorant}
    \f(r) = \esssup \{ |f(r \om)|: \om \in S \}, \ \  r\in (a,b),
\end{equation}
where the essential supremum is taken with respect to the $(k-1)$-dimensional surface measure.
\end{definition} 

\noi Notice that, $\f(r)$ is finite a.e. in $\Om_{a,b,S}$ (\cite[Theorem 2.49]{Folland}) and  for a radial function $f$, $f(x)=\f(|x|).$ Moreover,  every function defined on $\Om_{a,b,S}$ is  dominated by it's radial majorant. 

In \cite{ADS}, for $\Om_{1,\infty,\S^{N-1}}$ ($=B_1^c$-the exterior of the unit ball centred at the origin) and  for $q=p$, authors have considered class of weight functions that are dominated by radial functions. They have  shown that, if $g$ is dominated by a radial function $w$ and $w\in L^1((1, \infty),r^{p-1})$ (\cite[Theorem 1.1]{ADS}), then $g \in \mathcal{H}_{p,p}(B_1^c)$. Observe that, the radial majorant $\tilde{g}\le w$ and hence the same result is true, if $\tilde{g}\in L^1((1, \infty),r^{p-1}).$  In this article, we extend this result for $q\in[0,p^*]$ and for the general sectorial sets.

\begin{theorem}\label{weighted Lebesgue}
For $S \subset \S^{N-1}$ and $a,b \in [0, \infty]$ with $a<b$, let $\Om=\Om_{a,b,S}$  be the sectorial set as given in \eqref{def:sector} and let $\g$ be the radial majorant of $g\in L^1_{loc}(\Om).$   For $q\in (0,\infty)$, let 
$$X:= \left\{ \begin{array}{ll}
    L^1((a, b), r^{\frac{N}{\al(p,q)} - 1}), & \, p\ne N;  \\
    L^1\left((a, b), r^{N-1} \left( \log \left( \frac{r}{a} \right) \right)^{\frac{q}{N^{'}}}\right), &  \,p= N.
\end{array}\right.$$

\begin{enumerate}[(i)]
    \item  $N>p$: For $q\in (0,p^*]$, let $\g  \in X $ and in addition $\g$ be strictly decreasing for $q\in \left[P^*(1), p^* \right]$. Then $g \in \pq.$
    \item $N=p:$ For $q\in (0,p]$, let $\g\in X$ and $a>0$. Then  $g \in \pq.$
    \item $N<p:$ For $q\in (0,p]$, let $\g\in X$ and $0\notin \Om.$ 
    Then  $g \in \mathcal{H}_{p,q}(\Om).$
\end{enumerate}
Furthermore, there exists $C=C(N,p,q)>0$ so that 
    \begin{align*}
 \dis |g(x)| |u(x)|^q \, \dx  \le C\norm{\g}_{X} \left( \dis | \Gr u(x) |^p \, \dx \right)^{\frac{q}{p}},   \quad \forall  \, u \in  \c1(\Om). 
\end{align*}
\end{theorem}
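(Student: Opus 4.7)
Plan of proof. The strategy is polar decomposition. By the pointwise bound $|g(x)| \le \g(|x|)$ and Fubini, it suffices to show, for every $u \in \c1(\Omega_{a,b,S})$, that
\begin{align*}
\int_a^b \g(r)\, r^{N-1} \int_S |u(r\om)|^q \, \dS_{\om} \, \dr \le C \norm{\g}_X \left( \disab |\nabla u|^p \, \dx \right)^{q/p}.
\end{align*}
I would aim to bound the inner spherical integral by $C\, r^{N/\al(p,q)-1}\norm{\nabla u}_{L^p(\Omega)}^q$ (with an extra factor of $(\log(r/a))^{q/\Np}$ in the critical case $N=p$) and then pair this with $\g \in X$.

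For $q \in (0,p]$ I would work ray-by-ray. Since $u \in \c1(\Omega_{a,b,S})$, the fundamental theorem of calculus along $s \mapsto u(s\om)$ gives $u(r\om)=-\int_r^b \partial_s u(s\om) \,\ds$; combined with H\"older in $s$ weighted by $s^{N-1}$ (the one-dimensional radial Hardy inequality, which satisfies the Muckenhoupt condition when $N>p$), this yields $|u(r\om)|^p \le C\,r^{p-N} \int_r^b |\nabla u(s\om)|^p s^{N-1} \,\ds$. Raising to the power $q/p \le 1$ and applying Jensen to the concave map $t \mapsto t^{q/p}$ over $S$ pulls $q/p$ outside the $\om$-integral, after which Fubini collapses the right-hand side to $\norm{\nabla u}_{L^p(\Omega)}^q$, and the radial prefactor simplifies via the identity $N-1+q(p-N)/p = N/\al(p,q)-1$. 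The $N=p$ case is identical but with the logarithmic weight produced by the corresponding Muckenhoupt condition for the critical radial Hardy inequality, and the $N<p$ case is easier since $\Dp \hookrightarrow L^{\infty}(\Omega)$ by Corollary \ref{function space}, so only $\int \g(r) r^{N-1}\,\dr$ needs to be controlled.

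For the supercritical range $p<q \le p^*$ in case (i), Jensen's inequality fails in the required direction, and this is where the strict decrease hypothesis on $\g$ enters. My plan is a layer-cake decomposition: since $\g$ is strictly decreasing, its super-level sets are initial intervals $\{\g>t\}=(a,r_t)$ with $r_t=\g^{-1}(t)$, so $\g(r)=\int_0^{\g(a^+)} \mathbb{1}_{(a,r_t)}(r)\,\dt$. Interchanging integrals rewrites the weighted integral as $\int_0^{\g(a^+)} \int_{B_{r_t}\cap\Omega} |u|^q \,\dx \,\dt$. The Sobolev inequality \eqref{Sobolev} together with H\"older gives the ball estimate $\int_{B_{r_t}\cap\Omega} |u|^q \,\dx \le C\,r_t^{N/\al(p,q)} \norm{\nabla u}_{L^p(\Omega)}^q$, after noting the identity $N(1-q/p^*) = N/\al(p,q)$. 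A change of variables $t=\g(r)$ followed by integration by parts then converts $\int_0^{\g(a^+)} r_t^{N/\al(p,q)} \,\dt$ into $\tfrac{N}{\al(p,q)}\int_a^b \g(r)\,r^{N/\al(p,q)-1}\,\dr$, which is exactly $\tfrac{N}{\al(p,q)}\norm{\g}_X$.

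The main obstacle will be justifying the vanishing of the boundary terms at $r=a$ and $r=b$ in this final integration by parts, i.e.\ showing $\g(r)\,r^{N/\al(p,q)}\to 0$ at both endpoints; this I would derive from the strict monotonicity combined with the $L^1((a,b),r^{N/\al(p,q)-1})$ integrability of $\g$. A secondary subtlety is the threshold $P^*(1)$: below it the prefactor $r^{N/\al(p,q)-1}$ has a nonnegative exponent and a softer decomposition suffices, whereas above it strict monotonicity is genuinely needed to control the behaviour near the endpoints. Finally, the critical case $N=p$ requires a parallel layer-cake argument built on the Edmunds--Triebel critical Hardy inequality \eqref{critical Hardy} in place of \eqref{Sobolev}, which is precisely what introduces the $(\log(r/a))^{q/\Np}$ factor appearing in the definition of $X$.
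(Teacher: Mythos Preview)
Your plan for $q\in(0,p]$ via the ray-by-ray fundamental theorem of calculus plus H\"older is essentially what the paper does (there it is packaged through Proposition~\ref{observ1} with $\gamma=0$, $\delta=q/p$). One correction: for $N<p$ you cannot appeal to an $L^\infty$ embedding of $\Dp$, since $\Om_{a,b,S}$ need not be bounded in any direction (e.g.\ a half-space), and even when it is, that route yields the space $L^1((a,b),r^{N-1})$ rather than the claimed $X=L^1((a,b),r^{N/\al(p,q)-1})$. The paper instead integrates from $a$ to $r$ (using $0\notin\Om$ so that $u$ vanishes at $r=a$), and the H\"older step produces the correct factor $r^{(p-N)q/p}$. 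Similarly, for $N=p$ the logarithm $\log(r/a)$ arises from this FTC--H\"older computation $\int_a^r\tau^{-1}\,d\tau$, not from a layer-cake argument built on~\eqref{critical Hardy}; part~(ii) only treats $q\le p$, so your first method already covers it.

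Your treatment of $q>p$ has two genuine gaps.

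\textbf{(a)} For $q\in(p,P^*(1))$ the theorem does \emph{not} assume that $\tilde g$ is strictly decreasing, so your layer-cake decomposition is unavailable in this range. The paper instead writes $|\vph(r)|^q=-q\int_r^b|\vph|^{q-1}\vph'\,d\tau$ and applies a three-term H\"older with conjugate triple $(p_1,p_2,p)$, choosing $p_2=p^*/(q-1)$; this brings in a factor $\big(\int\tau^{N-1}|u_z|^{p^*}\,d\tau\big)^{1/p_2}$, later absorbed by the Sobolev embedding via Proposition~\ref{observ1} with $\gamma=1/p_2$, $\delta=1/p$, and no monotonicity is needed.

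\textbf{(b)} For $q\in[P^*(1),p^*]$ with $a>0$, your integration-by-parts step fails. Carrying out the change of variable $t=\tilde g(r)$, the boundary contributions at $r=b$ do cancel, but one is left with
\[
\int_0^{\tilde g(a^+)} r_t^{N/\al(p,q)}\,dt \;=\; a^{N/\al(p,q)}\,\tilde g(a^+)\;+\;\tfrac{N}{\al(p,q)}\|\tilde g\|_X .
\]
The first term is not controlled by $\|\tilde g\|_X$ with a constant depending only on $(N,p,q)$, and is infinite for, say, $\tilde g(r)=(r-a)^{-1/2}$ on $(a,a+1)$, which does lie in $X$. Monotonicity plus integrability force $r^{N/\al(p,q)}\tilde g(r)\to 0$ only as $r\downarrow 0$, not as $r\downarrow a$ for $a>0$. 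The paper's route here is quite different: it passes to the decreasing rearrangement via Hardy--Littlewood (this is where strict decrease of $\tilde g$ enters, through Remark~\ref{rearrangement for radial}) and P\'{o}lya--Szeg\"{o}, then applies the fundamental theorem to $u^*(r)^q$ followed by H\"older and Maz'ja's inequality (Proposition~\ref{Mazya1}); no boundary term in $\tilde g$ arises.
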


As the immediate consequences of the above theorem, for $\Om$ as given in Theorem \ref{weighted Lebesgue}, we have the well definedness of the Beppo-Levi space $\Dp$ for $N \le p$. Recall, $B_r[x]$ is the closed ball centred at $x$ with the radius $r$.

\begin{corollary}\label{function space}
Let $\Om = \RN \setminus B_a[0]$ with $a>0$ if $N=p$ and $a=0$ if $N<p$. Then $\Dp$ is continuously embedded in $W_{loc}^{1,p}(\Om)$, i.e.,  for every compact set $K$ in $\Om$, there exists $C=C(K,p) >0$ such that
$$  \int_{K} \left( |u(x)|^p + |\Gr u(x)|^p \right) \, \dx \le C \dis |\Gr u(x)|^p \, \dx, \quad \forall \, u \in \Dp.$$
\end{corollary}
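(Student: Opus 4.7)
The plan is to invoke Theorem \ref{weighted Lebesgue} with $q=p$ and the weight $g=\chi_K$ for arbitrary compact $K \cset \Om$. First note that in both hypotheses, $\Om=\R^N\setminus B_a[0]$ is the sectorial set $\Om_{a,\infty,\S^{N-1}}$ (taking $k=N$, $S=\S^{N-1}$): with $a>0$ when $N=p$ and $a=0$ when $N<p$. So cases (ii) and (iii) of Theorem \ref{weighted Lebesgue} apply respectively, without needing to worry about the ``strictly decreasing'' proviso from case (i).

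Given $K\cset\Om$, since $K$ is compact and disjoint from $B_a[0]$ (the singleton $\{0\}$ when $N<p$), I would choose $a_1,b_1$ with $a<a_1<b_1<\infty$ so that $K\subset\{x\in\R^N:a_1\le|x|\le b_1\}$, whence the radial majorant of $g=\chi_K$ satisfies $\g\le\chi_{[a_1,b_1]}$. Using $\al(p,p)=N/p$, the weighted Lebesgue space $X$ appearing in Theorem \ref{weighted Lebesgue} specialises to $L^1((0,\infty),r^{p-1})$ when $N<p$ and to $L^1((a,\infty),r^{N-1}(\log(r/a))^{p/N^{'}})$ when $N=p$. In either case the weight is locally bounded on $[a_1,b_1]\subset(a,\infty)$, so $\g\in X$ trivially, with $\norm{\g}_X$ depending only on $K,N,p,a$.

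Applying Theorem \ref{weighted Lebesgue} then gives
\begin{align*}
\int_K |u(x)|^p\,\dx \;\le\; C\,\norm{\g}_X \dis|\Gr u(x)|^p\,\dx, \qquad \forall\, u\in\c1(\Om),
\end{align*}
and combined with the trivial bound $\int_K|\Gr u|^p\,\dx\le\int_\Om|\Gr u|^p\,\dx$ this yields the stated inequality on test functions. I would then extend it to $u\in\Dp$ by a standard density argument: any sequence $(u_n)\subset\c1(\Om)$ Cauchy in $\norm{\Gr\,\cdot\,}_{L^p(\Om)}$ is, by the local inequality just established, Cauchy in $W^{1,p}(K)$ for every compact $K$, producing an unambiguous limit in $W^{1,p}_{loc}(\Om)$ that inherits the inequality. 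The scheme is essentially routine once Theorem \ref{weighted Lebesgue} is in hand; the only mild point of care is verifying that the density step fixes a well-defined function representative, which it does because $L^p_{loc}$-Cauchyness produces a pointwise a.e.\ limit along a subsequence.
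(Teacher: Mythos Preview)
Your proof is correct and follows essentially the same approach as the paper: apply Theorem \ref{weighted Lebesgue} with $q=p$ to obtain the $L^p$ bound on $K$, then add the trivial gradient bound. The only difference is cosmetic---the paper picks a single globally positive weight $w$ (namely $(1+r)^{-(p+1)}$ for $N<p$ and $(r^{N+1}\log(r/a)^{N-1})^{-1}$ for $N=p$), verifies $w\in X$, and then uses $\inf_K w>0$ to pass to $\chi_K$, whereas you take $g=\chi_K$ from the outset; your explicit density argument to pass from $\c1(\Om)$ to $\Dp$ is also a welcome addition that the paper leaves implicit.
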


\subsubsection{\bf{The cylindrical $(p,q)$-Hardy potentials ($1 \leq k<N$)}}

The weight functions provided by Theorem \ref{Symmetrization} and Theorem \ref{weighted Lebesgue} do not exhaust the entire $\pq$. In \cite{BT}, for $N>p,$  $q \in [p,p^*]$ and $\frac{N}{\al(p,q)}< k\leq N$ (equivalently, $q \in  (P^*(k),p^*]$ if $k\le p$, and  $q \in [p,p^*]$ if $k>p$) Badiale-Tarantello obtained the following cylindrical version of the C-K-N inequality for $\Om=\R^N$ and $k\ge 2$:
\begin{align}\label{p-qHardy2}
      \int_\Om \frac{|u(x)|^q}{\abs{y}^{\frac{N}{\al(p,q)}}}  \, \dx   \le C \left( \int_\Om | \Gr u(x) |^p \, \dx \right)^{\frac{q}{p}},   \quad \forall  \, u \in \c1(\Om).
\end{align}
 In \cite{BS,BT,Sandeep}, using the above inequality, it has been proved that, if $r^{\frac{N}{\al(p,q)}}\phi(r) \in L^{\infty}((0,\infty))$, then the cylindrical weights $g(x)=\phi(|y|) \in \mathcal{H}_{p,q}(\R^N)$. In this article, we consider  more general class of  domains and the weight functions of the form given in \eqref{products} and also allow $k=1$. 
For brevity, we only consider the case $N>p$. First we extend \eqref{p-qHardy2} for more general sectorial sets.

\begin{theorem}\label{cylin C-K-N}
Let $p \in (1,N)$ and $1 \le k\le N.$ For $S \subset \S^{k-1}$ and $a,b \in [0, \infty]$ with $a<b$, let $\Om=\Om_{a,b,S}\times \R^{N-k}.$
\begin{enumerate}[(i)]
    \item Then $|y|^{-\frac{N}{\al(p,q)}} \in \pq$ for
    $q \in \left\{ \begin{array}{ll}
    [p,p^*],   & \ \  k >p;  \\
    (P^*(k) ,p^*],  & \ \ k \leq p. 
\end{array}\right. $
    \item  If $0\not\in \Om_1$ and $k<p$, then $|y|^{-\frac{N}{\al(p,q)}} \in \pq$ for $q \in [p, P^*(k)]$.
  \end{enumerate}
\end{theorem}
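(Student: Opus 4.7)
Set $s := N/\al(p,q) = p(p^*-q)/(p^*-p)$; a direct computation shows $s \in [0,p]$ for $q \in [p,p^*]$ and $s < k$ iff $q > P^*(k)$. My strategy is a H\"older interpolation between the classical Sobolev inequality on $\R^N$ (the endpoint $q=p^*$, $s=0$) and a Hardy-type endpoint at $(q,s)=(p,p)$ in the $y$-direction, supplemented by the cylindrical Caffarelli--Kohn--Nirenberg inequality in the regime where the Hardy endpoint is unavailable.

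\textbf{Hardy--Sobolev interpolation (case $k>p$ of (i), and all of (ii)).} For $k \neq p$ and $v \in \c1(\R^k \setminus \{0\})$, I would apply the divergence theorem to the vector field $y/|y|^p$, whose divergence equals $(k-p)|y|^{-p}$, together with H\"older's inequality, to derive the classical Hardy inequality $\int_{\R^k} |v|^p/|y|^p \, dy \le (p/|k-p|)^p \int_{\R^k} |\nabla v|^p \, dy$. When $k > p$, the boundary term on $\partial B_\epsilon$ that appears when performing the IBP on $\R^k \setminus B_\epsilon$ scales as $\epsilon^{k-p}$ and vanishes as $\epsilon \to 0$, so the inequality extends to all $v \in \c1(\R^k)$. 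When $k < p$ with $0 \notin \Om_1$, every $u \in \c1(\Om)$ has its support bounded away from $\{y=0\}$, so $u(\cdot,z) \in \c1(\R^k \setminus \{0\})$ slicewise. Applying this inequality slicewise in $y$ and integrating in $z$ gives $\int_\Om |u|^p/|y|^p \, dx \le C \int_\Om |\nabla u|^p \, dx$. Combining with the Sobolev inequality $\int_\Om |u|^{p^*} dx \le C (\int_\Om |\nabla u|^p \, dx)^{p^*/p}$ via the generalized H\"older inequality with conjugate exponents $1/\theta, 1/(1-\theta)$, where $\theta := s/p = (p^*-q)/(p^*-p)$ so that $q = \theta p + (1-\theta) p^*$, one obtains
$$\int_\Om \frac{|u|^q}{|y|^s} \, dx \le \left(\int_\Om \frac{|u|^p}{|y|^p} \, dx \right)^\theta \left(\int_\Om |u|^{p^*} \, dx \right)^{1-\theta} \le C \left(\int_\Om |\nabla u|^p \, dx \right)^{q/p},$$
since $\theta + (1-\theta) p^*/p = q/p$. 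This proves (i) when $k>p$ (for $q \in [p,p^*]$) and all of (ii) (the argument in fact covers the entire range $q\in[p,p^*]$, hence in particular the stated range $q \in [p,P^*(k)]$).

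\textbf{Remaining case $k\le p$ of (i), and main obstacle.} Here $q \in (P^*(k),p^*]$ gives $s \in [0,k)$, but the Hardy endpoint is unavailable since $|y|^{-p}$ is not locally integrable on $\R^k$ when $k \le p$. I would invoke the cylindrical Caffarelli--Kohn--Nirenberg inequality of Badiale--Tarantello--Musina type (as discussed after \eqref{p-qHardy2} and referenced in \cite{BS,BT,Sandeep}), which for each $s \in [0,\min(p,k))$ states
$$\int_{\R^N} \frac{|u|^{P^*(s)}}{|y|^s} \, dx \le C \left(\int_{\R^N} |\nabla u|^p \, dx \right)^{P^*(s)/p}, \quad u \in \c1(\R^N).$$
Since $q = P^*(s)$ is equivalent to $s = N/\al(p,q)$, and since extending $u \in \c1(\Om)$ by zero to all of $\R^N$ preserves membership in $\c1$ (as $u$ has compact support inside the open set $\Om$), this inequality yields the claim immediately. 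The technical core of this step, which is the main obstacle, is a reduction to radial-in-$y$ functions via Schwarz symmetrization (the Hardy--Littlewood inequality applied to the radially decreasing weight $|y|^{-s}$ and the P\'olya--Szeg\"o inequality applied to $\nabla_y u$, with $\nabla_z u$ unaffected), followed by a power substitution $\rho = r^{(p-s)/p}$ that converts the weighted inequality on $(0,\infty)\times\R^{N-k}$ into a standard Sobolev inequality in an effective dimension, with exponents arranged so that the critical growth $P^*(s)$ arises on the left. The case $k=1$ requires a minor adjustment since $\S^0$-symmetrization is trivial, but one can argue directly on each ray of $\Om_{a,b,S}$.
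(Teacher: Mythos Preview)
Your Hardy--Sobolev interpolation for part (i) with $k>p$ and for all of (ii) is correct and genuinely different from the paper's method. The paper instead writes $|u_z(r\omega)|^q = -q\int_1^\infty |u_z(\lambda r\omega)|^{q-1}\,\nabla_y u_z(\lambda r\omega)\cdot\omega\, r\,d\lambda$ (with $\int_0^1$ for (ii)), multiplies by $r^{k-1-s}$, integrates over $\Om$, and rescales to extract the factor $\int_1^\infty \lambda^{s-k-1}\,d\lambda$, finite exactly when $q>P^*(k)$; the resulting integral inequality $\int |u|^q|y|^{-s}\le C\int |u|^{q-1}|\nabla u|\,|y|^{1-s}$ is then closed by H\"older with a conjugate triple and the Sobolev embedding. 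Your route is cleaner wherever the Hardy endpoint $\int|u|^p/|y|^p\le C\int|\nabla u|^p$ is available, and in fact settles (ii) on the full range $q\in[p,p^*]$, absorbing the $k<p$, $0\notin\Om_1$ portion of (i) as well. The paper's approach, by contrast, is self-contained (no Hardy input) and exposes the threshold $q=P^*(k)$ mechanically through the convergence of the $\lambda$-integral.

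There is, however, a genuine gap in your treatment of (i) for $k\le p$ when the Hardy endpoint fails. For $k\ge 2$ you fall back on citing \eqref{p-qHardy2} and zero-extending, which is acceptable since that is a published result, but for $k=1$ Badiale--Tarantello is unavailable and your ``minor adjustment'' is never supplied. More seriously, the symmetrization sketch you offer does not work as stated: under $\rho=r^{(p-s)/p}$ the integrands $u^q r^{k-1-s}$ and $|\partial_r u|^p r^{k-1}$ do both transform to the radial weight $\rho^{p(k-s)/(p-s)-1}$, but $|\nabla_z u|^p r^{k-1}$ transforms to the \emph{different} weight $\rho^{kp/(p-s)-1}$, so the right-hand side is not the $L^p$ norm of a gradient in any single effective Euclidean dimension (and ``$\nabla_z u$ unaffected'' is also imprecise under Steiner symmetrization in $y$). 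The paper's polar argument avoids this obstruction and handles all $k\ge 1$ uniformly, first establishing the inequality for $q\in[p,P^*(1)]$ with $q>P^*(k)$ and then interpolating with the Sobolev endpoint to cover $(P^*(1),p^*]$.
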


\begin{remark}\label{re cylin CKN}
$(i)$ If $a=0$ and $b = \infty$, then for $|y|^{-s}$ to be in $\pq,$ it is necessary that  $s=\frac{N}{\al(p,q)}$, one can see this by considering the scaling of a function. On the other hand, if $a>0$ then Theorem \ref{cylin C-K-N}  holds for  $|y|^{-s}$ with $s \geq \frac{N}{\al(p,q)}$, and if $b< \infty$, then Theorem \ref{cylin C-K-N}  holds for  $|y|^{-s}$ with $s \leq \frac{N}{\al(p,q)}$. Since $|x|^{-s}\le |y|^{-s}$, the restriction  $ p\le q\le p^*$ is also necessary for \eqref{p-qHardy2} (see also Remark \ref{nessq>=p}).

\noi $(ii)$ In \cite[Theorem 1.1]{Lehrback}, authors extended \eqref{p-qHardy2} by replacing $|y|$ (which is the distance of $x$ from $\R^{N-k}$) with distance function $\delta_E$ from general closed set $E$ in $\R^N$ and also allow the case $k=1$. More precisely, for $q \in [p,p^*]$, they have established 
    $$\int_{\R^N} \frac{|u(x)|^q}{|\delta_E(x)|^{\frac{N}{\al(p,q)}}} \ \dx \leq C \left(\int_{\R^N} |\nabla u|^p \ \dx \right)^{\frac{q}{p}}, \quad \forall \, u \in \c1(\R^N),$$
if and only if, the Assouad dimension of $E$ is strictly less than $\frac{Nq}{p^*}$. In particular, for $E=\R^{N-k}$, the Assouad dimension of $E$ is $N-k$ and $\de_E(x)=|y|$ and hence, \eqref{p-qHardy2} holds if and only if $N-k < \frac{Nq}{p^*}$.  Thus for \eqref{p-qHardy2} to hold, we must have $q>P^*(k)$  as given in part $(i)$ of the above theorem. In part $(ii)$, we addresses the complementary case:  $q \in [p,P^*(k)]$ for $k<p$ on a sectorial set with a hole at the origin.
    
    \noi $(iii)$   For $q \in (0,P^*(k)]$,  $|y|^{-\frac{N}{\al(p,q)}}$ is not locally integrable on any open set in $\R^k$ that contains the  origin.  On the other hand, if $\Om_1$ does not contain the origin and $k<p,$ then the above theorem ensures that $|y|^{-\frac{N}{\al(p,q)}}\in \pq$ for $q\in [p,p^*]$. In particular, by taking $k=1, S=\{1\},a=0,$ and $b=\infty$, we obtain \eqref{p-qHardy2}  for $\Om=\R^N_+$ and for $q\in [p,p^*]$.
\end{remark}

The following corollary is immediate from the above theorem. 
\begin{corollary}\label{gen cylin C-K-N}
Let $\Om, q$ be as given in Theorem \ref{cylin C-K-N}. Let $g(x)=g_1(y)$ be such that $\g_1 \in L^\infty((a,b), r^{\frac{N}{\al(p,q)}})$. Then the same conclusions of  Theorem \ref{cylin C-K-N} hold for $g$ in place of $|y|^{-\frac{N}{\al(p,q)}}.$
\end{corollary}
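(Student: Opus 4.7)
The corollary is a direct consequence of Theorem \ref{cylin C-K-N} via pointwise majorization of $g$ by a constant multiple of the power weight $|y|^{-N/\al(p,q)}$. The plan is to extract that constant from the weighted $L^\infty$-hypothesis on $\g_1$, and then invoke Theorem \ref{cylin C-K-N} on the dominating weight; no new analytic ingredients are needed.

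First, I would set
$$M := \norm{\g_1}_{L^\infty((a,b),\, r^{N/\al(p,q)})},$$
which by definition of the weighted $L^\infty$-norm yields $\g_1(r) \le M\, r^{-N/\al(p,q)}$ for a.e.\ $r \in (a,b)$. Since the radial majorant dominates $g_1$ pointwise, i.e.\ $|g_1(y)| \le \g_1(|y|)$ a.e.\ on $\Om_1$, this translates into
$$|g(x)| = |g_1(y)| \le M\, |y|^{-\frac{N}{\al(p,q)}}, \quad \text{for a.e. } x=(y,z) \in \Om.$$
Multiplying by $|u(x)|^q$, integrating over $\Om$, and using the fact that the hypotheses on $(N,k,p,q,\Om_1)$ assumed in the corollary are precisely those assumed in Theorem \ref{cylin C-K-N}, I would conclude: for every $u\in \c1(\Om)$,
$$\int_\Om |g(x)| |u(x)|^q \, \dx \le M \int_\Om \frac{|u(x)|^q}{|y|^{N/\al(p,q)}} \, \dx \le CM \left( \int_\Om |\Gr u(x)|^p \, \dx \right)^{q/p},$$
for some $C=C(N,k,p,q)>0$ supplied by Theorem \ref{cylin C-K-N}. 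This establishes $g \in \pq$ in each of the listed cases.

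No genuine obstacle arises: the statement is, as its placement as a corollary suggests, an essentially one-line deduction from the preceding theorem. The only point meriting attention is the correct translation of the one-dimensional weighted $L^\infty$-bound on the radial majorant $\g_1$ over $(a,b)$ into the power-weight majorization of the cylindrical function $g$ over the full product domain $\Om = \Om_{a,b,S}\times \R^{N-k}$; once the definition of $\g_1$ is unwound this step is transparent.
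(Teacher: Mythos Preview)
Your proposal is correct and follows essentially the same approach as the paper: extract the weighted $L^\infty$-bound on $\g_1$ to majorize $|g(x)|$ by $M\,|y|^{-N/\al(p,q)}$, then apply Theorem \ref{cylin C-K-N} directly. The paper writes the majorization step as a single $\esssup$ factor pulled outside the integral, but the argument is the same one-line deduction you give.
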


Next, we consider the case in which both  $g_1$ and $g_2$ are in certain  Lorentz spaces. Indeed, the product of any two functions from the Lorentz spaces need not be a $(p,q)$-Hardy potential (see Example \ref{non-compatible}). Here we provide a two-parameter family of compatible pairs of Lorentz spaces so that the product of functions from these Lorentz spaces always give rise to a  $(p,q)$-Hardy potential.
\begin{theorem}\label{Lorentz product}
Let $p \in (1,N)$ and let $t \in [0,1].$ Let $k>p$ if $t>0$, and $N-k>p$  if $t<1$.  Let $\Om$ and $g$ be as given in \eqref{products}.
  \begin{enumerate}[(i)]
      \item For  $s,t\in [0,1]$ with $st<1$, let  $(g_1,g_2) \in X_1\times X_2:= L^{\frac{k}{(k-p)st+p}, \frac{1}{st}}(\Om_1) \times L^{\frac{1}{st}}(\Om_2).$ Then  $g \in \pq$ for $q=(1-st)p.$ 
      \item For  $s,t\in [0,1],$ let $(g_1,g_2) \in X_1\times X_2:= L^{\frac{k}{(1-s)tp}, \infty}(\Om_1) \times L^{\frac{N-k}{(1-t)p}, \infty}(\Om_2).$ Then   $g \in \pq$ for  $q = (1-st)p+stp^*.$ 
      \end{enumerate}
Furthermore,  
\begin{align*}
    \dis |g(x)| |u(x)|^q \, \dx \le C \norm{g_1}_{X_1} \norm{g_2}_{X_2} \left(  \dis |\Gr u(x)|^p \, \dx \right)^{\frac{q}{p}}, \quad \forall  \, u \in \c1(\Om),
\end{align*}
for some $C=C(N,k,p,q)>0$.
\end{theorem}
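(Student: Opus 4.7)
My plan is to use Hölder's inequality in Lorentz spaces combined with the Lorentz--Sobolev embedding on the factor spaces $\Omega_1 \subset \R^k$ and $\Omega_2 \subset \R^{N-k}$ (and, for part (ii), also on $\R^N$). The key algebraic fact throughout is the homogeneity $\||v|^\alpha\|_{L^{r,s}} = \|v\|_{L^{r\alpha,s\alpha}}^\alpha$, which lets me convert Sobolev--Lorentz bounds on $u$ into bounds on $|u|^q$ while keeping the Hölder-conjugate indices in register.

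For part (i), set $\sigma = st \in [0,1)$, so that $q = (1-\sigma)p$. Fixing $z \in \Om_2$, I apply Hölder's inequality on $\Om_1$ in the Lorentz conjugate pair $(L^{r_1, 1/\sigma}, L^{r_1', 1/(1-\sigma)})$, where $r_1 = k/((k-p)\sigma + p)$ is precisely the exponent stated for $g_1$, to obtain
\[
\int_{\Om_1} |g_1(y)||u(y,z)|^q \, \dy \le C \|g_1\|_{L^{r_1, 1/\sigma}(\Om_1)} \, \||u(\cdot, z)|^q\|_{L^{r_1', 1/(1-\sigma)}(\Om_1)}.
\]
A direct computation gives $r_1' = k/((k-p)(1-\sigma))$, and the homogeneity identity together with $q=(1-\sigma)p$ yields $qr_1' = k^*$ and $q/(1-\sigma)=p$, so the second factor becomes $\|u(\cdot, z)\|_{L^{k^*, p}(\Om_1)}^q$. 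The Lorentz--Sobolev embedding in $\R^k$ (available since $k>p$ when $t>0$) then dominates this by $C\|\nabla_y u(\cdot,z)\|_{L^p(\Om_1)}^q$. Multiplying by $|g_2(z)|$, integrating in $z$, and applying classical Hölder in $z$ with conjugate exponents $1/\sigma$ and $1/(1-\sigma)$ extracts $\|g_2\|_{L^{1/\sigma}(\Om_2)}$ and leaves $(\int_{\Om_2} \|\nabla_y u(\cdot,z)\|_p^{q/(1-\sigma)} \dz)^{1-\sigma}$; since $q/(1-\sigma) = p$, the inner integral equals $\int_{\Om} |\nabla_y u|^p \dx \le \int_\Om |\nabla u|^p \dx$, and the exponent $1-\sigma = q/p$ produces the claimed bound.

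For part (ii), the exponent $q = (1-st)p + stp^* = (1-s)p + s[(1-t)p + tp^*]$ is a nested convex combination, suggesting a two-level split of $|u|^q$ into factors controlled separately by Sobolev in $\R^k$ paired with $g_1$ via Hölder--Lorentz (requiring $k>p$ when $t>0$) and by Sobolev in $\R^{N-k}$ paired with $g_2$ (requiring $N-k>p$ when $t<1$). I would iterate the Hölder--Lorentz + Sobolev scheme of part (i) in both coordinates, using the refined Hölder--Lorentz inequality, for which the condition on second indices $1/s_1 + 1/s_2 \ge 1$ is automatic whenever one factor sits in $L^{\cdot, \infty}$, and incorporating the full embedding $u \in L^{p^*, p}(\R^N)$ to account for the $stp^*$-contribution to $q$. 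The main obstacle is algebraic bookkeeping: the naive factor-by-factor matching of Hölder-conjugate indices against each Sobolev exponent produces pieces whose exponents do not sum to $q$, so the decomposition of $|u|^q$ must be calibrated by using all three Sobolev embeddings simultaneously, with the target Lorentz spaces $L^{k/((1-s)tp), \infty}(\Om_1)$ and $L^{(N-k)/((1-t)p), \infty}(\Om_2)$ emerging from the resulting identities in $s$ and $t$. The corner cases $(s,t) \in \{(0,0), (0,1), (1,0), (1,1)\}$ reduce respectively to a cylindrical Hardy in $z$, a cylindrical Hardy in $y$, a cylindrical Hardy in $z$, and the pure Sobolev inequality on $\R^N$, which serves as a sanity check for the algebra.
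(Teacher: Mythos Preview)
Your part~(i) is correct and structurally matches the paper: both slice in $z$, establish a weighted inequality in the $y$-variable bounding $\int_{\Om_1}|g_1|\,|u_z|^q\,\dy$ by $C\|g_1\|_{X_1}\|\nabla_y u_z\|_{L^p(\Om_1)}^q$, then integrate in $z$ with H\"older at exponents $(1/\sigma,\,1/(1-\sigma))$. The only difference is that you obtain the $y$-inequality directly via Lorentz--H\"older plus the Lorentz--Sobolev embedding in $\R^k$, whereas the paper quotes its Theorem~\ref{Symmetrization} (proved through symmetrization and a Muckenhoupt condition) to assert $g_1\in\mathcal{H}_{p,q}(\Om_1)$ and then its Proposition~\ref{property2}. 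Your route is a touch more self-contained here.

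Your part~(ii), however, is not a proof: you correctly identify the nested convex structure of $q$ and the relevant embeddings but stop at ``algebraic bookkeeping'' without producing a decomposition. The paper's device sidesteps the bookkeeping entirely. Write
\[
|g_1||g_2|\,|u|^q \;=\; \bigl(|g_1|^{1/t}\,|u|^{q_1}\bigr)^{t}\bigl(|g_2|^{1/(1-t)}\,|u|^{p}\bigr)^{1-t},\qquad q_1:=(1-s)p+sp^*,
\]
so that $tq_1+(1-t)p=q$, and apply H\"older with exponents $(1/t,\,1/(1-t))$. This reduces the problem to two \emph{single-factor} cylindrical inequalities: one for $|g_1|^{1/t}\in L^{\frac{k}{(1-s)p},\infty}(\Om_1)$ against $|u|^{q_1}$, the other for $|g_2|^{1/(1-t)}\in L^{\frac{N-k}{p},\infty}(\Om_2)$ against $|u|^{p}$. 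Each of these is handled by one further H\"older split (e.g.\ $|u|^{q_1}=|u|^{(1-s)p}\,|u|^{sp^*}$), the $(p,p)$-Hardy inequality on the relevant factor (from $L^{k/p,\infty}(\Om_1)\subset\mathcal{H}_{p,p}(\Om_1)$ when $k>p$, and analogously on $\Om_2$ when $N-k>p$), and the Sobolev embedding $\Dp\hookrightarrow L^{p^*}(\Om)$; the paper packages this step as Proposition~\ref{cylin Lorentz prop} and then combines the two factors via Proposition~\ref{property1}(i). This two-level H\"older decomposition is exactly the ``calibration'' you were searching for, and it makes the index matching automatic rather than something to be solved for.
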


In the following theorem,  we  consider the compatible pairs of $(g_1,g_2)$ from the weighted Lebesgue spaces. For $q\in (0,p^*]$,  we set $$\be(p,q):= \frac{{p^*}-{N'}p}{{p^*}-{N'}q}.$$

\begin{theorem}\label{weighted Lebesgue product}
Let $p \in (1,N)$ and $k<N$. For $S \subset \S^{k-1}$ and $a,b \in (0, \infty]$ with $a<b$, let $\Om= \Om_{a,b,S} \times \RNk$ and $g$ be as given in \eqref{products}. If $k\ne p$ and 
   \begin{align*}
    (\g_1,\g_2)  \in X_1\times X_2:= \left\{\begin{array}{ll}
    L^1((a,b),r^{\frac{(p-k)q}{p}+k-1}) \times  L^{\frac{p}{p-q}}((0, \infty),r^{N-k-1}), \; & \ \ q \in (0,p); \\
    L^{\be(p,q)}((a,b), r^{p-1})\times L^{\frac{\be(p,q)}{\be(p,q)-1}}((0,\infty)), \; & \ \ q \in [p, P^*(1)); \\
    L^{\infty}((a,b))\times L^{\frac{\al(p,q)}{N}} ((0,\infty)), \; & \ \ q \in \left[ P^*(1),p^* \right],
 \end{array} \right. 
 \end{align*}
then $g \in \pq$. Furthermore, 
\begin{align*}
       \dis |g(x)| |u(x)|^q \, \dx \le C \norm{\g_1}_{X_1} \norm{\g_2}_{X_2} \left(  \dis |\Gr u(x)|^p \, \dx \right)^{\frac{q}{p}}, \quad \forall  \, u \in \c1(\Om),
   \end{align*}
where $C=C(N,k,p,q)>0$.
\end{theorem}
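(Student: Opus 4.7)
My plan is to treat the three subcases in $q$ separately, following a shared template: apply Fubini to isolate the $y$- and $z$-integrations, bound one of the two slices by a previously established weighted inequality, close the other with Hölder, and use polar coordinates to convert $\norm{g_i}_{L^s(\R^{k_i})}$ into the prescribed $X_i$-norms of the radial majorants $\tilde{g}_i$.

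For $q\in(0,p)$: the weight exponent $(p-k)q/p + k - 1$ agrees exactly with $k/\al(k,q) - 1$, so for each fixed $z$, Theorem~\ref{weighted Lebesgue} applied in dimension $k$ to $u(\cdot,z)\in \c1(\Om_{a,b,S})$ (part (i) if $k>p$, part (iii) if $k<p$, the latter using $a>0$ to ensure $0\notin\Om_{a,b,S}$) yields
$$\int_{\Om_{a,b,S}}|g_1(y)||u(y,z)|^q\,\dy\le C\norm{\tilde{g}_1}_{X_1}\left(\int_{\R^k}|\Gr_y u(y,z)|^p\,\dy\right)^{q/p}.$$
Integrating in $z$ and applying Hölder with conjugate pair $(p/(p-q),p/q)$ closes the case, since $\norm{g_2}_{L^{p/(p-q)}(\R^{N-k})}\le C\norm{\tilde{g}_2}_{X_2}$ by polar coordinates (the measure $r^{N-k-1}\,\dr$ in $X_2$ is exactly what arises from the Jacobian) and $|\Gr_y u|\le|\Gr u|$ folds the slicewise gradient into $\norm{\Gr u}_p^q$.

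For $q\in[p,P^*(1))$: I would apply a Hölder split along the conjugate pair $(\beta(p,q),\beta(p,q)')$, with the auxiliary factor $|y|^{\pm N/\al(p,q)}$ inserted so that the first factor is controlled by Theorem~\ref{cylin C-K-N} (cylindrical CKN with weight $|y|^{-N/\al(p,q)}$) paired with $\tilde{g}_1\in L^\beta((a,b),r^{p-1})$, and the second factor is controlled by a one-dimensional weighted Hardy-type bound in the $z$-direction paired with $\tilde{g}_2\in L^{\beta'}((0,\infty))$. The algebraic identity $\beta(p,q) - 1 = N'(q-p)/(p^*-N'q)$ is precisely what makes the Hölder exponents balance. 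For $q\in[P^*(1),p^*]$: the $L^\infty$-bound gives $|g_1|\le\norm{\tilde{g}_1}_\infty$, reducing to $\int_\Om|g_2(z)||u|^q\,\dx$; I would then combine the fundamental theorem of calculus in the $z$-direction with the Muckenhoupt characterisation of one-dimensional weighted Hardy inequalities invoked elsewhere in the paper, and the exponent $\al(p,q)/N$ on $\tilde{g}_2$ is exactly what the Muckenhoupt condition dictates in view of the identity $\al(p,q)'=p^*/q$.

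The main obstacle is the intermediate range $q\in[p,P^*(1))$: the exponent $\beta(p,q)$ is tailored so that the Hölder split simultaneously matches the weighted $L^\beta$-hypothesis on $\tilde{g}_1$ (against measure $r^{p-1}\,\dr$) and the unweighted $L^{\beta'}$-hypothesis on $\tilde{g}_2$ through the cylindrical CKN weight $|y|^{-N/\al(p,q)}$. Verifying this algebraic compatibility, and checking consistency at the endpoints $q=p$ and $q=P^*(1)^-$ where Subcase~2 meets Subcases~1 and~3, is the crucial technical step.
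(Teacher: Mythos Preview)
Your treatment of the first subrange $q\in(0,p)$ is correct and matches the paper's argument: a slice estimate in the $y$-variable (the paper derives it directly via the fundamental theorem and H\"older, which is essentially a re-proof of the relevant piece of Theorem~\ref{weighted Lebesgue} in dimension $k$), followed by H\"older in $z$ with the conjugate pair $\bigl(\tfrac{p}{p-q},\tfrac{p}{q}\bigr)$.

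For the two remaining subranges, however, your plan has a genuine gap. In the range $q\in[p,P^*(1))$ you propose to insert the factor $|y|^{\pm N/\al(p,q)}$ and invoke the cylindrical C--K--N inequality. But the hypothesis $\g_1\in L^{\be(p,q)}((a,b),r^{p-1})$ carries the weight $r^{p-1}$, which has nothing to do with the C--K--N weight $|y|^{-N/\al(p,q)}$; the exponent $p-1$ is exactly $(p-k)q/p+k-1$ evaluated at $q=p$, and this is the tell. The paper's route is an \emph{interpolation}: one first establishes the endpoint $q=P^*(1)$ by working in the $z$-direction (fundamental theorem plus H\"older in the radial variable $r=|z|$, producing the factor $r^{1+k-N}$ and the pairing $\g_1\in L^\infty$, $\g_2\in L^1$), and then for $q=tp+(1-t)P^*(1)$ with $t\in(0,1)$ one checks $t=1/\be(p,q)$, so that $|g_1|^{1/t}\in\H_{p,p}(\Om)$ and $|g_2|^{1/(1-t)}\in\H_{p,P^*(1)}(\Om)$, and Proposition~\ref{property1}(i) closes the case.

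Your third-range plan fails for a structural reason. After bounding $|g_1|\le\|\g_1\|_\infty$ and using a one-dimensional weighted Hardy (Muckenhoupt) bound along each ray in $z$, you would obtain a slice estimate of the form
\[
\int_0^\infty \g_2(r)\,r^{N-k-1}\,|u(y,r\om)|^q\,\dr \;\le\; C\left(\int_0^\infty r^{N-k-1}\,|\Gr_z u(y,r\om)|^p\,\dr\right)^{q/p}.
\]
But since $q>p$ here, integrating this over $(y,\om)$ does \emph{not} yield $\bigl(\int_\Om|\Gr u|^p\bigr)^{q/p}$: the power $q/p>1$ cannot be pulled outside the outer integral. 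The paper again interpolates, this time between $q=P^*(1)$ (established above) and $q=p^*$ (Sobolev), writing $q=tP^*(1)+(1-t)p^*$ with $t=N/\al(p,q)$; then $\g_2\in L^{\al(p,q)/N}$ is precisely $|\g_2|^{1/t}\in L^1((0,\infty))$, and H\"older plus the Sobolev embedding finish the argument. The common missing ingredient in both your subcases is this direct proof of the $q=P^*(1)$ endpoint in the $z$-variable, which serves as the pivot for both interpolations.
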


Having obtained a large class of  cylindrical and non-cylindrical  $(p,q)$-Hardy potentials, next we consider the existence of solution for the Euler-Cauchy equation associated to  \eqref{p-qHardy}.
  
\subsection{The existence of solution}
For $g\in \pq$ with $g\ge 0$, let $B_q(g)$ be the best constant in \eqref{p-qHardy}. Then
\begin{equation} \label{best}
\begin{aligned}
\frac{1}{B_q(g)}=\inf \left\{ \frac{\int_{\Om} |\nabla u|^p}{\left(\int_{\Om} g|u|^q\right)^{\frac{p}{q}}}: u \in \Dp \setminus \{0\} \right\} = \inf \left\{ \int_{\Om} |\nabla u|^p: u \in N_g\right\}, 
\end{aligned}
\end{equation}
where  $ N_g = \left\{ u \in \Dp : \int_{\Om} g |u|^q  = 1 \right\}$. If $\frac{1}{B_q(g)}$  is attained for some  $u\in N_g$, then one can verify that for $q>1$, $u$  satisfies the following equation:
$$\int_{\Om} |\nabla u|^{p-2} \nabla u \cdot \nabla v = \frac{1}{B_q(g)} \int_{\Om} g |u|^{q-2}uv, \quad \forall  \,  v  \in \Dp.$$
In other words, for $\la= \frac{1}{B_q(g)},$ $u$ solves the following nonlinear partial differential equation weakly:
\begin{align}\label{eqn:evp}
         -\De_p u = \la g(x) \abs{u}^{q-2}u, \quad u \in \Dp,
\end{align}
where $\Delta_p$ is the $p$-Laplace operator defined as $\Delta_p u =  {\rm div} (|\Gr u|^{p-2} \Gr u)$. Observe that, for  $q \neq p$,  $v=(\la B_q(g))^{- \frac{1}{q-p}} u$ solves the above equation for any $\la>0.$ For $q=p$, \eqref{eqn:evp} is a nonlinear eigenvalue problem and a non-zero solution exists only for certain $\la$ that are precisely the eigenvalues of \eqref{eqn:evp}.

The above partial differential equation appears in many important problems in mathematics as well as in physics. For example, radial $g$ in the Matukuma's models for the dynamics of globular cluster of stars \cite{Li, NS, Y1},  cylindrical potentials in the study of  dynamics of galaxies \cite{Bertin, Ciotti},  scalar curvature problem \cite{Li1},   the weighted eigenvalue problems \eqref{eqn:evp} for $q=p$. One of the  sufficient conditions that ensure the best constant $B_q(g)$ is attained in $\Dp$ is the compactness of the following map:
 $$G_q(u) :=\int_\Om g|u|^q, \ u \in \Dp.$$  Indeed, for $g\in\pq$, from \eqref{p-qHardy} it is clear that $G_q$  is continuous.

For $q=p$, in the context of studying weighted eigenvalue problems, many authors considered $g$ in various Lebesgue and Lorentz spaces so that the map $G_q$ is compact. For example, for $g \in L^\ga(\Om)$, see \cite{MM} ($N>p=2,\ga > \frac{N}{2}$), \cite{Allegretto} ($N>p=2,\ga=\frac{N}{2}$), \cite{Allegretto-Huang, Szulkin} ($N>p, \ga=\frac{N}{p}$). For $g \in L^{\frac{N}{p}, \ga}(\Om)$, see \cite{Visciglia} ($N>p=2$,$\ga<\infty$), \cite{AMM} ($N>p=2$, $\ga=\infty$),  \cite{Anoop1} $(N=p=2)$. In \cite{ADS}, the authors obtained the compactness of $G_p$ for $g$ dominated by a certain radial function. For $q\neq p$, there are few results where the compactness of $G_q$ is proved. For example, for $p \in (1,N)$ and $q \in (0,p^*)$, $g \in L^{\infty}(\Om) \cap L^{\al(p,q)}(\Om)$ \cite{Yu}, for $q \in [2, 2^*),$ $g \in  L^{\al(2,q),\ga}(\Om)$ with $1 \le \ga < \infty$ \cite{Visciglia} and $g$ in the closure of $\cc(\Om)$ in $L^{\al(2,q), \infty}(\Om)$ \cite{AMM}. In this article, we state certain general assumptions on $g$ that ensures the compactness of $G_q$ and unify all the above compactness results. 

For $i=1,2$, let $X_i(\Om_i)$ be Banach (function) space containing $\cc(\Om_i)$.  We define $\F_{X_i}:=\overline{\cc(\Om_i)}^{X_i}.$ 
\begin{theorem} \label{cpct1}
Let $\Om$ and $g$ be as given in \eqref{products} and \eqref{domain} and let $g \ge 0$. For $i=1,2$, let $g_i \in \F_{X_i}$ and the following inequality holds: \begin{equation} \label{normineq}
     \int_{\Om} g(x) |u(x)|^q \, \dx \le C \norm{g_1}_{X_1} \norm{g_2}_{X_2} \left( \int_\Om |\Gr u(x)|^p \, \dx \right)^{\frac{q}{p}}, \quad \forall  \, u \in \c1(\Om).
  \end{equation}
Then the map $G_q=\int_{\Om} g|u|^q$ is compact on $\Dp$ for $q \in (0, \de)$, where $\de=p^*$ (if $N>p$), and 
$\de=\infty$ (if $N \le p).$ Moreover, for $q \in (1,\delta)$, \eqref{eqn:evp} admits a non-negative solution in $\Dp$.
\end{theorem}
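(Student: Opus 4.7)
The plan is to establish compactness of $G_q$ by approximating $g = g_1 g_2$ by products of compactly supported smooth functions, reducing the integral to a compact subset of $\Om$ on which the Rellich--Kondrachov theorem applies; once compactness is in hand, the existence of a non-negative solution to \eqref{eqn:evp} will follow by the direct method applied to the variational problem in \eqref{best}.

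Since $g_i \in \F_{X_i} = \overline{\cc(\Om_i)}^{X_i}$, for every $\eps > 0$ I would pick $g_i^\eps \in \cc(\Om_i)$ with $\norm{g_i - g_i^\eps}_{X_i} < \eps$. Setting $h^\eps(x) := g_1^\eps(y) g_2^\eps(z)$ gives a bounded function supported in the compact set $K_\eps := \text{supp}(g_1^\eps) \times \text{supp}(g_2^\eps) \subset \Om$. Decomposing
\[
g - h^\eps = (g_1 - g_1^\eps)\, g_2 + g_1^\eps\, (g_2 - g_2^\eps),
\]
and applying the product inequality \eqref{normineq} to each summand (both are again products of functions from $X_1 \times X_2$), I would obtain, for every $u \in \c1(\Om)$ and by density for every $u \in \Dp$,
\[
\dis |g - h^\eps|\, |u|^q \, \dx \le C\,\eps \left( \norm{g_2}_{X_2} + \norm{g_1}_{X_1} + \eps \right) \left( \dis |\Gr u|^p \, \dx \right)^{q/p}.
\]

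Next, given $u_n \wra u$ in $\Dp$, the uniform bound on $\norm{\Gr u_n}_{L^p(\Om)}$ combined with the Sobolev embedding (if $N > p$) or with Corollary \ref{function space} (if $N \le p$) produces a uniform bound for $\{u_n\}$ in $\wpl$. Rellich--Kondrachov, applied on a bounded open neighborhood of $K_\eps$ in $\Om$, then yields a subsequence converging to $u$ in $L^q(K_\eps)$ for any $q < \de$. As $h^\eps$ is bounded and supported in $K_\eps$, it follows that $\int_\Om h^\eps |u_n|^q \, \dx \to \int_\Om h^\eps |u|^q \, \dx$. A standard $\eta/3$ argument, choosing $\eps$ small relative to the uniform $\Dp$-bound of the $u_n$ and exploiting the error estimate above, would give $|G_q(u_n) - G_q(u)| < \eta$ for all large $n$, establishing the weak-to-strong continuity (equivalently, the compactness) of $G_q$ on $\Dp$ for $q \in (0, \de)$.

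For the existence of a non-negative solution, I would fix $q \in (1, \de)$ and take a minimizing sequence $\{u_n\} \subset N_g$ for the variational problem on the right of \eqref{best}. It is bounded in $\Dp$ and so has a weak subsequential limit $u_0 \in \Dp$; the compactness of $G_q$ gives $G_q(u_0) = \lim G_q(u_n) = 1$, so $u_0 \in N_g$. Weak lower semicontinuity of $u \mapsto \dis |\Gr u|^p \, \dx$ then makes $u_0$ a minimizer, and replacing $u_0$ by $|u_0|$ preserves both the gradient integral and the constraint, yielding a non-negative minimizer that weakly solves \eqref{eqn:evp} with $\la = 1/B_q(g)$. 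The most delicate point is the approximation step: the hypothesis \eqref{normineq} must apply not only to $(g_1, g_2)$ but also to the cross pairs $(g_1 - g_1^\eps, g_2)$ and $(g_1^\eps, g_2 - g_2^\eps)$; this is legitimate because the underlying product inequalities (such as those in Theorems \ref{Lorentz product} and \ref{weighted Lebesgue product}) are bilinear statements on $X_1 \times X_2$ and extend to all admissible factor pairs.
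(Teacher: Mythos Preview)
Your proposal is correct and follows essentially the same approach as the paper: the paper also splits $g_1 g_2$ into $(g_1 - g_{\eps,1})g_2 + g_{\eps,1}(g_2 - g_{\eps,2}) + g_{\eps,1} g_{\eps,2}$, applies \eqref{normineq} to the first two terms, and uses the local compact embedding $\Dp \hookrightarrow L^q_{loc}(\Om)$ (proved separately as Proposition~\ref{locally embedd}, handling $q<1$ via H\"older) to treat the compactly supported remainder; the existence argument via a minimizing sequence on $N_g$, compactness of $G_q$, weak lower semicontinuity, and passage to $|u_0|$ is identical. Your final observation that \eqref{normineq} must hold for the cross pairs, not just the fixed $(g_1,g_2)$, is well taken---the paper uses this implicitly as well.
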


\begin{remark} $(i)$ For $q<p$, $\cc(\Om_i)$ is dense in the function spaces $X_i$ as considered in Theorem \ref{Symmetrization} and Theorem \ref{Lorentz product}. Thus by the above theorem, for $g_i \in X_i$, the map $G_q$ is compact. For $q \ge p$, the map $G_q$ is compact for $g_i$ in $\F_{X_i}$-a proper closed subspace of the respective space $X_i$. As a consequence, Theorem \ref{cpct1} together with Theorem \ref{Symmetrization} extends the compactness results of \cite{AMM, Visciglia} to $q \in (0,p^*)$, \cite{Anoop1} to $q \in (0, \infty)$.

\noi $(ii)$ Since $\cc((a,b))$ is dense in $X$ given in Theorem \ref{weighted Lebesgue}, for $\g \in X$ using Theorem \ref{cpct1} the map $G_q$ is compact.  For $q \in (0, P^*(1))$, $\cc((a,b))$ is dense in  $X_i$ given in Theorem \ref{weighted Lebesgue product}, and hence for $(\g_1, \g_2) \in X_1 \times X_2$,  $G_q$ is compact.

\noi $(iii)$ Some of the non-compact cases for $p=q$ are discussed in  \cite{Anoop_Ujj} using the variant of concentration compactness lemma and Maz'ya's capacity conditions.
\end{remark}

The rest of this article is organized as follows. In Section 2, we briefly discuss symmetrization and recall the Lorentz and Lorentz-Zygmund spaces. In Section 3, we prove some important properties of $\pq$ that are required in subsequent sections. Section 4, Section 5, and Section 6 contain the proofs of theorems \ref{Symmetrization}-\ref{cpct1}. In Section 7, we discuss some examples and the necessary conditions. In Appendix, we prove some results on Lorentz and Lorentz-Zygmund spaces and present alternative proofs of certain classical embeddings. 
\section{Preliminaries}\label{preliminary}
 In this section, we briefly describe the symmetrization and the one dimensional decreasing rearrangements. Using this, we define the Lorentz and Lorentz-Zygmund spaces and list some of their properties.
 
 Firstly, we list some of the notations and conventions we used in this article:
 \begin{itemize}
    \item $\frac{1}{0}= \infty$.
    \item $\om_N := \frac{\pi^{\frac{N}{2}}}{\Gamma(\frac{N}{2}+1)}$ is the measure of a unit ball in $\RN$. 
    \item For $q\in [1,\infty],\, q'$ denote the conjugate of $q,$ i.e., $\frac1q+\frac{1}{q'}=1.$
    \item For $a,b,c\in [1,\infty]$ we say $(a,b,c)$ is a conjugate triple, if $\frac1a+\frac{1}{b}+\frac{1}{c}=1$, and we say $(a,b)$ is a conjugate pair if $(a,b,\infty)$ is a conjugate triple.
    \item If $a\in[0,1]$ and $b\in [1,\infty)$, then $(\frac{1}{a},\frac{1}{1-a})$ and $(b,\frac{b}{b-1})$ are conjugate pairs.  If $a,b\in[0,1]$, then $(\frac{1}{a}, \frac{1}{b}, \frac{1}{1-a-b})$ is a conjugate triple.
    \item For  $u:\R^N \mapsto \R$ and $z \in \RNk$, the $z$-section of $u$  is denoted by $u_z$ i.e., $u_z(y)=u(y,z)$ $\forall  \, y \in \R^k$. Similarly, the $y$-section of $u$  is denoted by $u_y$ i.e., $u_y(z)=u(y,z), \forall  \, z \in \R^{N-k}$. 
    \item For $u \in C^1(\R^N)$ and $z \in \RNk$,  $\Gr_{y} u(y,z) := \Gr_y u_z(y)=\left(\frac{\pa u_z}{\pa x_1},\frac{\pa u_z}{\pa x_2},...,\frac{\pa u_z}{\pa x_k}\right).$
    \end{itemize}

\subsection{Symmetrization}

Let $\Om \subset \RN$ be an open set and  $\M(\Om)$ be the set of all extended real valued Lebesgue measurable functions that are finite a.e. in $\Om$. Given a function $f \in \M(\Om)$ and for $s > 0, $ we define $E_{f}(s) = \{ x \in \Om : |f(x)| > s \}.$  The \textit{distribution function}  $\mathcal{\mu}_{f}$ of $f$ is defined as
$
\mu_{f}(s) = |E_{f}(s)|,
$
where $|\cdot|$ denotes the Lebesgue measure in $\RN$. We define the \textit{one dimensional decreasing rearrangement} $ f^{*} $ of $f$ as
\begin{equation*}
		f^*(t) = \inf \{ s > 0 : \mu_{f}(s) < t \}, \; \mbox{ for } t > 0. 
\end{equation*}
The map $f \mapsto f^*$ is not sub-additive. However, we obtain a sub-additive function from $f^*,$ namely the maximal function $f^{**}$ of $f^*$, defined by 
\begin{equation*}\label{Maximal 1}
 f^{**}(t)=\frac{1}{t}\int_0^t f^*(\tau)\, {\rm d}\tau, \quad t>0.
 \end{equation*}

Next, we explicitly compute the rearrangement of certain class of functions. 
\begin{remark}\label{rearrangement for radial}
Let $g$ be a non-negative strictly decreasing function on $\R^+ \cup \{0\}$. Let $f(x):= g(\abs{x})$ for $x \in \R^N.$ Then for $s \in Range(g)$, 
\begin{align*}
    E_f(s) = \left\{ x \in \RN : g(|x|) > s \right\}= \left\{ x \in \RN : |x| < g^{-1}(s) \right\},
\end{align*}
and $\mu_f(s) = \om_N (g^{-1}(s))^N.$ Hence for $t>0,$ 
\begin{align*}
    f^*(t)  = \inf  \left\{ s>0 : \mu_f(s) \le t \right\} & = \inf  \left\{ s \in (\inf g, \sup g) : \mu_f(s) \le t \right\} \\ & \leq \inf \left\{ s \in Range(g) : \om_N (g^{-1}(s))^N \leq t \right\} \\
    &=\inf \left\{ s \in Range(g) : s \ge g \left( \om_N^{-\frac{1}{N}} t^{\frac{1}{N}} \right) \right\} = g \left( \om_N^{-\frac{1}{N}} t^{\frac{1}{N}} \right).
\end{align*}
Further, notice that if $g$ is onto, then we have $f^*(t)=g ( \om_N^{-\frac{1}{N}} t^{\frac{1}{N}})$. 
\end{remark}

Now we state two important inequalities concerning symmetrization. For more details we refer to the books \cite{HLP, PS}.
\begin{proposition}\label{HL and PS}
Let  $N \ge 2$.
\begin{enumerate}[(a)]
\item \textbf{Hardy-Littlewood inequality}: Let $f$ and $g$ be nonnegative measurable functions. Then
 $$\dis f(x)g(x) \, \dx \le \int_0^{|\Om|}f^*(t) g^*(t)\, \dt.$$
\item \textbf{P\'{o}lya-Szeg\"{o} inequality}: 
Let $u \in  \D^{1,p}_0(\RN)$. Then  
$$ \displaystyle N^p \om_N^{\frac{p}{N}} \int_{0}^{\infty} s^{p - \frac{p}{N}} | {u^*}^{\prime}(s)|^p \, ds  \le \int_{\RN} | \Gr u(x)|^p \, \dx.$$
\end{enumerate}	
\end{proposition}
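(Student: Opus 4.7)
The plan is to prove $(a)$ by the layer-cake formula together with Fubini's theorem, and $(b)$ by combining the coarea formula, the Euclidean isoperimetric inequality, H\"{o}lder's inequality, and a one-dimensional change of variable. For part $(a)$, the starting point is the layer-cake identity $f(x)=\int_0^\infty \chi_{E_f(s)}(x)\,\ds$ together with its analogue for $g$. Fubini's theorem then gives
\begin{align*}
\dis f(x)g(x)\,\dx = \int_0^\infty\!\int_0^\infty |E_f(s)\cap E_g(t)|\,\ds\,\dt \leq \int_0^\infty\!\int_0^\infty \min\{\mu_f(s),\mu_g(t)\}\,\ds\,\dt.
\end{align*}
Since $f^*,g^*$ are nonincreasing on $(0,|\Om|)$ and equimeasurable with $f,g$, the superlevel sets $\{f^*>s\}=(0,\mu_f(s))$ and $\{g^*>t\}=(0,\mu_g(t))$ are nested intervals whose intersection has measure exactly $\min\{\mu_f(s),\mu_g(t)\}$. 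Reading the layer-cake/Fubini computation in reverse for $f^*,g^*$ on $(0,|\Om|)$ turns the right-hand side into $\int_0^{|\Om|} f^*(t)g^*(t)\,\dt$, which is the desired bound.

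For part $(b)$, by density of $\c1(\R^N)$ in $\dpR$ it is enough to prove the estimate for $u\in\c1(\R^N)$ with $u\ge 0$. The coarea formula gives both
\begin{align*}
\intRn |\Gr u(x)|^p\,\dx = \int_0^\infty\!\left(\int_{\{u=s\}} |\Gr u|^{p-1}\,\dS\right)\ds, \qquad -\mu_u'(s) = \int_{\{u=s\}} |\Gr u|^{-1}\,\dS.
\end{align*}
Applying H\"{o}lder's inequality with exponents $p$ and $p'$ to the decomposition $1=|\Gr u|^{(p-1)/p}\cdot |\Gr u|^{-(p-1)/p}$ on the level set $\{u=s\}$ yields $\mathrm{Per}(\{u>s\})^p \le \bigl(\int_{\{u=s\}} |\Gr u|^{p-1}\,\dS\bigr)(-\mu_u'(s))^{p-1}$, and combining this with the Euclidean isoperimetric inequality $\mathrm{Per}(\{u>s\})\ge N\om_N^{1/N}\mu_u(s)^{1-1/N}$ leads to
\begin{align*}
\intRn |\Gr u(x)|^p\,\dx \ge N^p\om_N^{p/N}\int_0^\infty \frac{\mu_u(s)^{p-p/N}}{(-\mu_u'(s))^{p-1}}\,\ds.
\end{align*}
The change of variable $t=\mu_u(s)$ (so that $s=u^*(t)$ and $(u^*)'(t)\,\mu_u'(u^*(t))=1$) then converts the right-hand side into $N^p\om_N^{p/N}\int_0^\infty t^{p-p/N}|(u^*)'(t)|^p\,\dt$, as required.

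The main obstacle I foresee is the rigorous justification of the change of variable step at critical levels of $u$, where $\mu_u$ may fail to be absolutely continuous or have flat pieces. I would address this by first reducing, via smooth mollification and Sard's theorem, to the case where $|\Gr u|>0$ on $\{u>0\}$ outside a measure-zero set, so that $\mu_u$ is absolutely continuous and the chain rule $(u^*)'(t)=1/\mu_u'(u^*(t))$ is legitimate; the general case then follows by an approximation argument exploiting the lower semicontinuity of the $L^p$ norm of the rearranged gradient.
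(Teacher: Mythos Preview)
Your proposal is correct and follows the standard classical route to both inequalities. Note, however, that the paper does not actually supply a proof of this proposition: it records (a) and (b) as classical results and refers the reader to the books \cite{HLP} and \cite{PS}. So there is no ``paper's own proof'' to compare against beyond those references.

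Your argument for (a) via layer-cake and Fubini, and for (b) via the coarea formula, the isoperimetric inequality, H\"older on level sets, and the change of variable $t=\mu_u(s)$, is exactly the standard derivation found in the cited sources (and in most textbooks on rearrangements). The technical caveat you flag --- absolute continuity of $\mu_u$ and the validity of the chain rule at critical levels --- is real, and your proposed cure (approximate by smooth compactly supported functions, invoke Sard, then pass to the limit using lower semicontinuity) is the usual and correct way to handle it.
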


The following inequality  is due to Maz'{j}a \cite[Lemma 1, Pg-49]{Mazja}.

\begin{proposition}\label{Mazya1}
Let $q \ge 1$. Then, for any measurable function $f: \R^N \mapsto \R $ the following inequality holds
\begin{align*}
    \int_0^{\infty} f^*(t)^q \, {\rm d}t^q \le \left( \int_0^{\infty} f^*(t) \, \dt \right)^q.
\end{align*}
\end{proposition}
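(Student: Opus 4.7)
My plan is to reduce the integral inequality to a pointwise comparison of primitives. I will define, for $T\in(0,\infty)$,
\[
\Phi(T) := \left(\int_0^T f^*(t)\,\dt\right)^{q}, \qquad \Psi(T) := \int_0^T f^*(t)^q \,{\rm d}(t^q) = q\int_0^T f^*(t)^q\, t^{q-1}\,\dt,
\]
so that $\Phi(0)=\Psi(0)=0$, and then let $T\to\infty$ in the inequality $\Psi(T)\le\Phi(T)$ to obtain the claim. One may harmlessly assume $\int_0^\infty f^*<\infty$, for otherwise the right-hand side of the proposition is infinite and nothing has to be proved; under this assumption $f^*$ is locally integrable, both $\Phi$ and $\Psi$ are absolutely continuous on $[0,\infty)$, and $\Phi(T)\to\bigl(\int_0^\infty f^*\bigr)^{q}$ as $T\to\infty$.

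Next I would establish $\Psi'\le \Phi'$ almost everywhere and then invoke absolute continuity together with the common initial value to conclude $\Psi\le\Phi$. A direct computation gives, at a.e.\ $T$,
\[
\Psi'(T) = q\, f^*(T)^q\, T^{q-1}, \qquad \Phi'(T) = q \left(\int_0^T f^*\right)^{q-1} f^*(T).
\]
Discarding the trivial locus where $f^*(T)=0$ and, in the case $q>1$, extracting a $(q-1)$-th root (the case $q=1$ reducing to an equality), the inequality $\Psi'(T)\le \Phi'(T)$ is equivalent to
\[
T\, f^*(T) \le \int_0^T f^*(t)\,\dt.
\]
This is immediate from the monotonicity of $f^*$: since $f^*$ is non-increasing, $f^*(t)\ge f^*(T)$ for $t\in(0,T]$, and integrating over $(0,T)$ yields $\int_0^T f^*\ge T f^*(T)$.

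I do not expect any genuine obstacle; the entire argument turns on the single fact that $f^*$ is non-increasing. The only technical detail requiring any care is the a.e.\ differentiability of the primitives $\Phi$ and $\Psi$, which follows routinely from the Lebesgue differentiation theorem once one knows $f^*\in L^1_{\mathrm{loc}}(0,\infty)$, and the fact that the composition of an absolutely continuous function with the locally Lipschitz map $s\mapsto s^q$ on $[0,\infty)$ is again absolutely continuous on bounded intervals.
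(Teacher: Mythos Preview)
The paper does not supply its own proof of this proposition; it merely states the inequality with a citation to Maz'ya's book. Your argument is correct and entirely self-contained. The reduction to the pointwise inequality $T f^*(T)\le\int_0^T f^*$, which is precisely the monotonicity of $f^*$, is the natural route.

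One small technical remark: you assert that $\Psi$ is absolutely continuous on $[0,\infty)$ once $f^*\in L^1_{\mathrm{loc}}$, but what is actually needed is the local integrability of $t\mapsto f^*(t)^q t^{q-1}$, and near $t=0$ this does not follow from $f^*\in L^1_{\mathrm{loc}}$ alone. It does, however, follow immediately from the very bound $t f^*(T)\le\int_0^t f^*\le\int_0^\infty f^*=:M$ that you already invoke: writing $f^*(t)^q t^{q-1}=(t f^*(t))^{q-1} f^*(t)\le M^{q-1} f^*(t)$, one sees that the integrand is dominated by an $L^1$ function. With this observation in place your proof is complete.
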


\subsection{The Lorentz and Lorentz-Zygmund spaces}
The Lorentz spaces are two parameter family of function spaces introduced by Lorentz in \cite{Lorentz} that refine the classical Lebesgue spaces. For more details on the Lorentz spaces, we refer to \cite{Adams,EdEv}.

Let $\Om$ be an open set in $\RN$ and $f \in \M(\Om)$. For $(p,q) \in (0,\infty) \times (0,\infty]$ we consider the following quantity:
 \begin{align}
 |f|_{p,q} := \norm{t^{\frac{1}{p}-\frac{1}{q}} f^{*} (t)}_{{L^q((0,\infty))}}
 =\left\{\begin{array}{ll}
 \left(\displaystyle\int_0^\infty \left(t^{\frac{1}{p}-\frac{1}{q}} {f^{*}(t)}\right)^q \, \dt \right)^{\frac{1}{q}}, \quad & q < \infty; \\ 
 \displaystyle\sup_{t>0}t^{\frac{1}{p}}f^{*}(t),\quad & q=\infty.
 \end{array} 
 \right. 
 \end{align}
The Lorentz space $L^{p,q}(\Om)$ is defined as
 \[ L^{p,q}(\Om) := \left \{ f \in \M(\Om): \, |f|_{p,q} < \infty \right \},\]
where $ |f|_{p,q}$ is  a complete quasi norm on $L^{p,q}(\Om).$ For $(p,q) \in (1, \infty] \times (0, \infty]$, 
$$ \norm{f}_{p,q} := \norm{t^{\frac{1}{p}-\frac{1}{q}} f^{**}(t)}_{{L^q((0,|\Om|))}} $$
is a norm on $L^{p,q}(\Om)$ and it is equivalent to  $ |f|_{p,q}$ \cite[Lemma 3.4.6]{EdEv}. Note that $L^{p,p}(\Om) = L^p(\Om)$ for $p \in (0, \infty)$ and $ L^{p,\infty}(\Om)$ coincides with the weak-$L^p$ space $:= \{ f \in \M(\Om): \underset{s> 0}{\sup} \, s(\alpha_f(s))^{\frac{1}{p}}< \infty \}.$

\begin{remark}\label{equivalent}
For $0 < s < p$ and $f \in L^{p, \infty}(\RN)$, we consider the following quantity:  \begin{align*}
    ||| f |||_{p, \infty} := \sup_{\{E \subset \RN,|E|< \infty \}} |E|^{\frac{1}{p}-\frac{1}{s}} \left( \int_{E}|f(x)|^s \, \dx \right)^{\frac{1}{s}}.
\end{align*}
Then $\norm{f}_{p, \infty} \le |||f|||_{p, \infty} \le \left( \frac{p}{p-s} \right)^{\frac{1}{s}} \norm{f}_{p, \infty}$ (see \cite[Theorem 5.18]{Castillo}). 
\end{remark}

In the following  proposition we list some properties of the Lorentz spaces.
\begin{proposition}\label{Lorentz properties}
 Let $p,q,\tilde{p},\tilde{q} \in [1, \infty]$.
 \begin{enumerate}[(i)]
    \item  For $\al>0$, $\norm{\abs{f}^{\alpha}}_{\frac{p}{\alpha}, \frac{q}{\alpha}} = \norm{f}^{\alpha}_{p,q}.$ 
    \item Generalized H\"{o}lder inequality: Let $f \in L^{p_1, q_1}(\Om)$ and $g \in L^{p_2, q_2}(\Om)$, where  $(p_i, q_i) \in (1, \infty) \times [1, \infty]$ for $i = 1,2$. If $(p,q)$ be such that $\frac{1}{p} = \frac{1}{p_1} + \frac{1}{p_2}$ and 
          $\frac{1}{q} = \frac{1}{q_1} + \frac{1}{q_2},$ then 
          \begin{align*}
                \norm{fg}_{p,q} \leq C \norm{f}_{p_1,q_1} \norm{g}_{p_2,q_2},
           \end{align*}
          where  $C = C(p) > 0$ is a constant such that $C = 1,$ if $p=1$ and $C = \p,$ if $p > 1$.    
    
    \item  If $q \leq \tilde{q},$ then  $L^{p,q}(\Om) \hookrightarrow L^{p, \tilde{q}}(\Om)$, i.e.,  there exists a constant $C > 0$ such that
           \begin{align}\label{embed1}
                \norm{f}_{p,\tilde{q}} \leq C \norm{f}_{p,q}, \quad \forall  \,  \, f \in  L^{p,q}(\Om).    
            \end{align}
    \item If $\tilde{p} < p,$ then $L^{p,q}(\Om) \hookrightarrow L_{loc}^{\tilde{p}, \tilde{q}}(\Om)$. 
\end{enumerate}
\end{proposition}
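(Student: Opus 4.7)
The proof breaks into four independent arguments, all built from the same toolkit: (a) the layer-cake identity $\mu_{|f|^{\alpha}}(s) = \mu_f(s^{1/\alpha})$, which immediately yields $(|f|^{\alpha})^*(t) = (f^*(t))^{\alpha}$; (b) the submultiplicative bound $(fg)^*(t) \le f^*(t/2)\,g^*(t/2)$, obtained from the inclusion $\{|fg|>s_1s_2\} \subseteq \{|f|>s_1\} \cup \{|g|>s_2\}$ combined with $\mu_f(f^*(u)) \le u$; (c) monotonicity of $f^*$; and (d) the Hardy-type inequality $\norm{f^{**}}_{L^q(t^{q/p-1}\,dt)} \le p'\,\norm{f^*}_{L^q(t^{q/p-1}\,dt)}$ for $p>1$, which links the quasinorm $|\cdot|_{p,q}$ to the norm $\norm{\cdot}_{p,q}$.

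Part (i) is immediate: substituting (a) into the definition of $|\cdot|_{p/\alpha, q/\alpha}$ and matching exponents gives $\norm{|f|^{\alpha}}_{p/\alpha,q/\alpha} = |f|_{p,q}^{\alpha}$. For part (ii), I would apply (b) and the substitution $s = t/2$ to obtain
\begin{equation*}
|fg|_{p,q}^{q} \le 2^{q/p}\int_0^\infty \bigl[s^{1/p_1 - 1/q_1} f^*(s)\bigr]^{q} \bigl[s^{1/p_2 - 1/q_2} g^*(s)\bigr]^{q} \, ds,
\end{equation*}
and then invoke the classical H\"older inequality in $L^{q_1/q}$ and $L^{q_2/q}$ on $(0,\infty)$, using $q/q_1 + q/q_2 = 1$, to obtain the quasinorm form $|fg|_{p,q} \le C_0 |f|_{p_1,q_1}|g|_{p_2,q_2}$. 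The passage to the norm $\norm{\cdot}_{p,q}$ with the stated constant (which is $1$ if $p=1$ and $p'$ if $p>1$) then comes from (d).

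For part (iii), the plan is to establish first the endpoint $\tilde q = \infty$: since $f^*$ is non-increasing, for each $t>0$,
\begin{equation*}
c_{p,q}\, t^{q/p} f^*(t)^q \le \int_{t/2}^{t} s^{q/p - 1} f^*(s)^{q}\, ds \le |f|_{p,q}^{q},
\end{equation*}
which gives $|f|_{p,\infty} \le C|f|_{p,q}$. For $q < \tilde q < \infty$, I would split
\begin{equation*}
|f|_{p,\tilde q}^{\tilde q} = \int_0^\infty t^{\tilde q/p - 1} f^*(t)^{\tilde q - q}\, f^*(t)^{q}\, dt,
\end{equation*}
and use the endpoint bound $f^*(t) \le C t^{-1/p}|f|_{p,q}$ on the factor $f^*(t)^{\tilde q - q}$, leaving exactly $|f|_{p,q}^q$ behind. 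For part (iv), I would restrict to a set $E$ of finite measure; then $(f\chi_E)^*$ is supported in $[0,|E|]$ and dominated by $f^*$, so applying the same endpoint bound gives
\begin{equation*}
|f\chi_E|_{\tilde p, \tilde q}^{\tilde q} \le C|f|_{p,q}^{\tilde q} \int_0^{|E|} t^{\tilde q/\tilde p - 1 - \tilde q/p}\, dt,
\end{equation*}
and the last integral converges precisely because $\tilde p < p$. Taking $E$ to be an arbitrary relatively compact subset of $\Omega$ delivers the local embedding.

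The main obstacle I anticipate is obtaining the sharp constant $p'$ in part (ii): the pointwise bound (b) already costs a factor $2^{1/p}$, so landing on the stated constant requires reformulating the H\"older step in terms of the subadditive maximal function $f^{**}$ (rather than $f^*$) and then appealing to (d) to control $f^{**}$ by $f^*$ at the very end; this is the standard route in the Lorentz-space literature, but the constants must be tracked carefully through each step.
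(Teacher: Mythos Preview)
The paper does not actually prove this proposition: it says (i) follows directly from the definition and defers (ii)--(iv) to Hunt \cite{Hunt} and Edmunds--Evans \cite{EdEv}. Your proposal supplies the textbook arguments that sit behind those citations, and your plan for each part is essentially the standard one found in those sources, so there is no genuine divergence of approach to compare.

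Two small technical caveats are worth flagging. First, statement (i) with the norm $\norm{\cdot}_{p,q}$ (defined via $f^{**}$) is only an equivalence up to constants, not an exact equality, because $(|f|^{\alpha})^{**} \neq (f^{**})^{\alpha}$ in general; the exact equality you derive holds for the quasinorm $|\cdot|_{p,q}$, and that is what the paper's later applications actually use (they write $\norm{|g_1|^{1/t}}_{Y} \le C\norm{g_1}_{X}^{1/t}$ with a constant). Second, for (ii) your route via $(fg)^*(t)\le f^*(t/2)g^*(t/2)$ necessarily carries the factor $2^{1/p}$ you anticipate; Hunt's argument (the one cited) goes through O'Neil's inequality for $f^{**}$ directly and avoids that loss, which is how the constant $p'$ arises. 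Since the precise constant plays no role anywhere in the paper, your approach is perfectly adequate for the applications even if it does not recover the exact constant claimed in the statement.
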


\begin{proof} Proof of (i) directly follows using the definition of the Lorentz space. Proof of (ii) follows using \cite[Theorem 4.5]{Hunt}. For the proof of (iii) and (iv), see \cite[Proposition 3.4.3 and Proposition 3.4.4]{EdEv}.
\end{proof}

The Lorentz-Zygmund spaces are three parameter family of function spaces that refine the Lorentz spaces. For more information on Lorentz-Zygmund spaces, we refer to \cite{ColinRudnik, ET}. Let $\Om \subset \R^N$ be a bounded open set and let $l_1(t) = \log ( \frac{e |\Om|}{t} )$. Given a function $f \in \M(\Om)$ and for $(p,q, \al) \in (0,\infty] \times (0,\infty] \times \R$, we consider the following quantity:
\begin{align*} 
       |f|_{p,q, \al} := \norm{t^{\frac{1}{p}-\frac{1}{q}} {l_1(t)}^{\al} f^{*}(t)}_{{L^q((0,|\Om|))}} 
       =\left\{\begin{array}{ll}
                              \left(\displaystyle\int_0^{|\Om|} \left(t^{\frac{1}{p}-\frac{1}{q}} {l_1(t)}^{\alpha} {f^{*}(t)} \right)^q \, \dt \right)^{\frac{1}{q}}, & 0< q < \infty;\\ 
                              \displaystyle\sup_{0 < t < |\Om|} t^{\frac{1}{p}} {l_1(t)}^{\alpha} {f^{*}(t)}, &\; q=\infty.
                            \end{array}\right.
 \end{align*}
Then the Lorentz-Zygmund space $L^{p,q;\al}(\Om)$ is defined as
\[ L^{p,q;\al}(\Om) := \left \{ f\in \M(\Om): \,   |f|_{p,q,\al}<\infty \right \},\]
where $ |f|_{(p,q;\al)}$ is the quasi norm on $L^{p,q;\al}(\Om)$, and for $(p,q,\al)\in (1, \infty) \times [1, \infty] \times \R$,   
\begin{align}\label{equivallence norm}
\norm{f}_{p,q, \al} = \norm{t^{\frac{1}{p}-\frac{1}{q}} {l_1(t)}^{\al} f^{**}(t)}_{{L^q((0,|\Om|))}}
 \end{align}
is a norm in $L^{p,q;\al}(\Om)$ equivalent to $\abs{f}_{p,q, \al}$ \cite[Corollary 8.2]{ColinRudnik}. In the following proposition we list some important properties of the Lorentz-Zygmund spaces.

\begin{proposition}\label{LZ prop}
Let $p,q,\tilde{q} \in [1, \infty]$ and $\al, \beta \in (-\infty, \infty).$ 
\begin{enumerate}[(i)]
    \item Let $p,q \in (1, \infty], \al \in \R$, and $\ga >0$. Then there exists $C>0$ such that $\norm{|f|^{\ga}}_{\frac{p}{\ga}, \frac{q}{\ga}, \al \ga} \le C\norm{f}^{\ga}_{p, q, \al}$,  $\forall  \, f \in L^{p, q; \al}(\Om).$ 
    \item If $\tilde{p} > p,$ then $L^{\tilde{p},\tilde{q}; \be}(\Om) \hookrightarrow L^{p,q; \al}(\Om)$, i.e.,  there exists $C > 0$ such that 
    $$ \norm{f}_{p, q, \al} \le C \norm{f}_{\tilde{p},\tilde{q}, \be}, \quad \forall \, f \in L^{p,q; \al}( \Om). $$
    \item  If either $q \leq \tilde{q}$ and $\al \geq \beta$ or, $q > \tilde{q}$ and $\al + \frac{1}{q} > \beta + \frac{1}{\tilde{q}},$ then $L^{p,q; \al}( \Om) \hookrightarrow L^{p,\tilde{q};\beta}( \Om)$.
\end{enumerate}
\end{proposition}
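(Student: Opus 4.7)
The three statements are standard embedding/rearrangement properties of Lorentz--Zygmund spaces, and the plan is to prove each of them by direct manipulation of the quasi-norm $|\cdot|_{p,q,\alpha}$ using the monotonicity of $f^{*}$ and of $l_1(t)=\log(e|\Om|/t)$, then invoke the equivalence between $|\cdot|_{p,q,\alpha}$ and the norm $\|\cdot\|_{p,q,\alpha}$ recorded after \eqref{equivallence norm}.

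For (i), the key point is the elementary identity $(|f|^{\ga})^{*}(t)=(f^{*}(t))^{\ga}$. Substituting this directly into the definition of the quasi-norm with parameters $(p/\ga,q/\ga,\al\ga)$ gives
\[
||f|^{\ga}|_{\frac{p}{\ga},\frac{q}{\ga},\al\ga}=|f|_{p,q,\al}^{\ga},
\]
and (i) follows from the equivalence of $|\cdot|$ and $\|\cdot\|$.

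For (ii), I would first extract a pointwise majorisation of $f^{*}$ from its membership in $L^{\tilde p,\tilde q;\be}(\Om)$. Using that $f^{*}$ is non-increasing and $l_1$ is non-increasing, one bounds $\|f\|_{\tilde p,\tilde q,\be}$ from below by integrating over a dyadic interval $(t/2,t)$ to obtain an estimate of the form $f^{*}(t)\le C\,t^{-1/\tilde p}(l_1(t))^{-\be-1/\tilde q}\|f\|_{\tilde p,\tilde q,\be}$ (with the usual $\sup$-version when $\tilde q=\infty$). Plugging this into the integrand defining $\|f\|_{p,q,\al}$, one is reduced to verifying finiteness of a one-dimensional integral whose $t$-exponent is $(q/p-q/\tilde p)-1$. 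Since $\tilde p>p$, this exponent is strictly greater than $-1$ near $t=0$, so the integral converges and yields the desired constant; the bounded factor $l_1(t)^{\al-\be-1/\tilde q}$ is integrable against $dt$ on $(0,|\Om|)$ because $l_1$ has only logarithmic growth and $\Om$ has finite measure.

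For (iii), I would treat the two sub-cases separately. When $q>\tilde q$, write
\[
t^{1/p-1/\tilde q}\,l_1(t)^{\be}\,f^{*}(t)=\bigl(t^{1/p-1/q}\,l_1(t)^{\al}\,f^{*}(t)\bigr)\cdot t^{1/q-1/\tilde q}\,l_1(t)^{\be-\al}
\]
and apply H\"older's inequality with exponents $q/\tilde q$ and $q/(q-\tilde q)$. The first factor integrates to $\|f\|_{p,q,\al}^{\tilde q}$; after the substitution $u=l_1(t)$, the second factor becomes $\int_{1}^{\infty}u^{(\be-\al)\tilde q q/(q-\tilde q)}\,du$, which is finite precisely under the hypothesis $\al+1/q>\be+1/\tilde q$. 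When $q\le\tilde q$ and $\al\ge\be$, I would first use $l_1\ge 1$ to absorb the difference in the $\al,\be$-powers, reducing to showing $L^{p,q;\al}\hookrightarrow L^{p,\tilde q;\al}$. Here the crucial step is the pointwise bound $t^{1/p}\,l_1(t)^{\al}\,f^{*}(t)\le C\,\|f\|_{p,q,\al}$, obtained again by integrating the defining integrand over $(t/2,t)$ (treating $\al\ge 0$ and $\al<0$ separately, using $l_1(t/2)\le C\,l_1(t)$ in the latter case). Once this pointwise bound is in hand, one writes $F^{\tilde q}\,dt/t=F^{\tilde q-q}\cdot F^{q}\,dt/t$ to convert the $L^{q}(dt/t)$ control into $L^{\tilde q}(dt/t)$ control.

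The main technical obstacle is the pointwise bound needed in the sub-case $q\le\tilde q$ of (iii); handling the sign of $\al$ and the case $q=\infty$ requires a careful dyadic argument, but no new idea beyond the monotonicity of $f^{*}$ and $l_1$. All remaining manipulations are routine exponent bookkeeping.
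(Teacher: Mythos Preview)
Your approach is essentially correct and is considerably more detailed than the paper's own argument: the paper disposes of (i) in one line (``immediately follows using the definition'') and for (ii) and (iii) simply cites Theorems~9.1 and~9.3 of Bennett--Rudnick, whereas you supply self-contained proofs via the rearrangement identity $(|f|^{\ga})^{*}=(f^{*})^{\ga}$, dyadic pointwise bounds on $f^{*}$, and H\"older's inequality in the measure $dt/t$. Both routes are valid; yours has the advantage of being elementary and self-contained, while the paper's is shorter but defers the actual work to the literature.

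One small slip in your outline for (ii): the dyadic integration over $(t/2,t)$ only yields
\[
f^{*}(t)\le C\,t^{-1/\tilde p}\,l_1(t)^{-\be}\,|f|_{\tilde p,\tilde q,\be},
\]
without the extra factor $l_1(t)^{-1/\tilde q}$ you wrote, because $\int_{t/2}^{t}s^{\tilde q/\tilde p-1}l_1(s)^{\be\tilde q}\,ds\approx t^{\tilde q/\tilde p}l_1(t)^{\be\tilde q}$ (the logarithm is essentially constant on the dyadic interval, so no additional $l_1$-power appears). This does not affect your conclusion: plugging the corrected bound into $|f|_{p,q,\al}$ still leaves the integral $\int_{0}^{|\Om|}t^{q/p-q/\tilde p-1}l_1(t)^{(\al-\be)q}\,dt$, which converges since $\tilde p>p$ makes the $t$-exponent strictly greater than $-1$ and the logarithmic factor is harmless on a set of finite measure. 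The rest of your plan for (iii) is accurate.
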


\begin{proof}  
$(i)$ This assertion immediately follows using the definition of Lorentz-Zygmund spaces.

\noi $(ii)$ and $(iii)$ Proof follows using \cite[Theorem 9.1 and Theorem 9.3]{ColinRudnik}. 
\end{proof}

Next we give some examples of functions that lie in certain Lorentz and Lorentz-Zygmund spaces. 
\begin{example}\label{ex rearrangement}
$(i)$ For $0<d< N$, consider $g(t) = t^{-d}, t \in (0,\infty)$ and $f(x)=g(|x|), \, x \in \RN$. Since $g$ is strictly decreasing and onto, by Remark \ref{rearrangement for radial}, $f^*(t) =  g( \om_N^{-\frac{1}{N}} t^{\frac{1}{N}})= ( \frac{\om_N}{t} )^{\frac{d}{N}}.$ Consequently,
\begin{align*}
 f^{**}(t) = \frac{1}{t} \int_0^t \left( \frac{\om_N}{s} \right)^{\frac{d}{N}} \, \ds = \frac{N}{N-d} \left( \frac{\om_N}{t} \right)^{\frac{d}{N}} \,. 
 \end{align*}  Therefore, $f \in L^{\frac{N}{d}, \infty}(\R^N)$ and $\norm{f}_{\frac{N}{d}, \infty} = \displaystyle \frac{N \om_N^{\frac{d}{N}}}{N-d}$.

\noi $(ii)$ Consider $g(t)=t^{-N}( \log  ( e (\frac{R}{t} )^N ) )^{-N}, t \in (0, R_1)$ with $R_1 = R e^{\frac{1-N}{N}}$ and $f(x)=g(|x|), \, x\in B_{R_1}(0).$ One can verify that $g$ is strictly decreasing and onto. Then using Remark \ref{rearrangement for radial}, we obtain
$$f^*(t)=\frac{\om_N}{t}  \left( \log  \left( e \frac{\om_N R^N}{t} \right) \right)^{-N} \le \frac{\om_N}{t}  \left( \log  \left( e \frac{\om_N R_1^N}{t} \right) \right)^{-N}.$$
Hence
$$ \abs{f}_{1, \infty;N} = \displaystyle \sup_{0<t<|B_{R_1}(0)|} t \left( \log \left( e\frac{|B_{R_1}(0)|}{t} \right) \right)^{N} f^{*}(t) \le \om_N.$$
Therefore, $f \in L^{1, \infty;N}(B_{R_1}(0))$.
\end{example}
\section{The space $\pq$}

In this section, we provide various methods for constructing functions in $\pq$. We also discuss some properties and inclusion relations of the function space $\pq$, which will be used frequently in the subsequent sections.
\begin{proposition}\label{property1}
Let $\Om$ be an open set in $\R^N$ with $N\ge 1.$ For $t\in[0,1]$, define $g_t(x)=|g_1(x)|^t|g_2(x)|^{1-t}.$
\begin{enumerate}[(i)]
   \item For $q_1,q_2\in (0,\infty)$, let $g_1 \in \mathcal{H}_{p,q_1}(\Om)$ and $g_2\in \mathcal{H}_{p,q_2}(\Om)$.  Then $g_t\in \pq$ with $q=tq_1+(1-t)q_2.$
   \item Let $g_1 \in \pp$ and $g_2 \in L^1(\Om)$. Then for $t \in (0,1]$, $g_t \in \pq$ with $q=tp$. 
\end{enumerate}
\end{proposition}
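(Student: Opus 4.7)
The plan is to prove both parts by a direct application of H\"older's inequality, splitting $|u|^q$ into a product that matches the two given $(p,q_i)$-Hardy type inequalities.

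For part $(i)$, I first dispose of the trivial endpoint cases $t=0$ and $t=1$, where $g_t=|g_2|$ or $|g_1|$ and the conclusion is immediate from the hypotheses (with $q=q_2$ or $q=q_1$). For $t\in(0,1)$, since $q=tq_1+(1-t)q_2$, I factor
\begin{equation*}
|g_1(x)|^t|g_2(x)|^{1-t}|u(x)|^q = \bigl(|g_1(x)||u(x)|^{q_1}\bigr)^t\bigl(|g_2(x)||u(x)|^{q_2}\bigr)^{1-t}.
\end{equation*}
Then the H\"older inequality with the conjugate pair $(1/t,\,1/(1-t))$ gives
\begin{equation*}
\int_\Om |g_t(x)||u(x)|^q\,\dx \le \left(\int_\Om |g_1||u|^{q_1}\,\dx\right)^{t}\left(\int_\Om |g_2||u|^{q_2}\,\dx\right)^{1-t}.
\end{equation*}
Using $g_i\in \mathcal{H}_{p,q_i}(\Om)$ with constants $C_i>0$, each factor is bounded by $C_i^{\,t}$ or $C_i^{\,1-t}$ times a power of $\|\Gr u\|_{L^p}$. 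Multiplying the two exponents of $\|\Gr u\|_{L^p}$ yields $tq_1+(1-t)q_2=q$, so we get the desired inequality with constant $C=C_1^{\,t}C_2^{\,1-t}$.

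For part $(ii)$, the case $t=1$ is again trivial. For $t\in(0,1)$, using $q=tp$, I write
\begin{equation*}
|g_1(x)|^t|g_2(x)|^{1-t}|u(x)|^{tp} = \bigl(|g_1(x)||u(x)|^p\bigr)^t\,|g_2(x)|^{1-t},
\end{equation*}
and again apply H\"older's inequality with exponents $(1/t,\,1/(1-t))$ to obtain
\begin{equation*}
\int_\Om |g_t||u|^{tp}\,\dx \le \left(\int_\Om |g_1||u|^p\,\dx\right)^{t}\left(\int_\Om |g_2|\,\dx\right)^{1-t}.
\end{equation*}
The first factor is controlled by the hypothesis $g_1\in\pp$, giving a constant times $\|\Gr u\|_{L^p}^{\,tp}$, and the second factor is the finite constant $\|g_2\|_{L^1(\Om)}^{1-t}$, completing the proof.

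There is no real obstacle here; the argument is essentially bookkeeping. The one point to be careful about is just matching the exponents so that the power of $\|\Gr u\|_{L^p}$ on the right-hand side is precisely $q$, and handling the degenerate endpoint values of $t$ separately so that we do not invoke H\"older with an infinite exponent.
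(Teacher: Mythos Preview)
Your proof is correct and essentially identical to the paper's own argument: both parts handle the endpoints $t\in\{0,1\}$ trivially and, for $t\in(0,1)$, factor the integrand and apply H\"older's inequality with the conjugate pair $(1/t,\,1/(1-t))$, then invoke the hypotheses on $g_1$ and $g_2$.
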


\begin{proof}
$(i)$ For $t\in[0,1]$, let $q= tq_1 + (1-t)q_2$. For $t=0,1$, clearly, $g_t \in \mathcal{H}_{p,q}(\Om)$.  For $t \in (0,1)$,  for $u\in \c1(\Om)$, we use the H\"{o}lder's inequality to the conjugate pair $ (\frac{1}{t},\frac{1}{1-t})$ and \eqref{p-qHardy} for $g_1$ and $g_2$  to obtain the following inequalities:  
\begin{align*}
    \dis |g_t(x)||u(x)|^q & \, \dx  = \dis |g_1(x)|^t |g_2(x)|^{1-t} |u(x)|^{tq_1} |u(x)|^{(1-t)q_2} \, \dx \\ 
    & \le \left( \dis  |g_1(x)|  |u(x)|^{q_1} \, \dx \right)^t \left( \dis |g_2(x)| |u(x)|^{q_2} \, \dx  \right)^{1-t} \\
    & \le C \left(  \dis |\Gr u(x)|^p \, \dx \right)^{t\frac{q_1}{p}} \left( \dis |\Gr u(x)|^p \, \dx \right)^{(1-t) \frac{q_2}{p}}  = C \left( \dis |\Gr u(x)|^p \, \dx \right)^{\frac{q}{p}}.
\end{align*}
  
\noi $(ii)$ For $t\in(0,1]$, let $q=tp$. For $t=1$, clearly $g_t=|g_1| \in \pp$. Let $t \in (0,1)$. For $u \in \c1(\Om)$, we use the H\"{o}lder's inequality and \eqref{p-qHardy} for $g_1$ to obtain the following: 
\begin{align*}
    \dis |g_t(x)||u(x)|^q \,\dx & =  \dis |g_1(x)|^t |g_2(x)|^{1-t} |u(x)|^{tp} \,\dx \\
    & \le \left( \dis |g_1(x)|  |u(x)|^p \,\dx  \right)^t \left(  \dis |g_2(x)| \, \dx \right)^{1-t} \le C \left( \dis |\Gr u(x)|^p \, \dx \right)^{\frac{q}{p}}.
\end{align*}
\end{proof}

\begin{remark}
$(i)$ By taking $g=g_1=g_2$ in  the above proposition,  we obtain the following inclusions:
 \begin{align}
     &\mathcal{H}_{p,q_1}(\Om)\cap \mathcal{H}_{p,q_2}(\Om) \subset \pq, \quad \forall\, q\in [q_1,q_2].
      \end{align}
     In particular, 
     \begin{equation}
        \pq \supset \left\{\begin{array}{cc}
        \mathcal{H}_{p,p}(\Om) \cap L^{\infty}(\Om),   & q\in [p,p^*];\\
        \pp\cap L^1(\Om),   & q\in (0,p].
     \end{array}\right.
     \end{equation}

\noi $(ii)$ By the Sobolev inequality,  $1\in \mathcal{H}_{p,p^*}(\Om).$  Thus by Proposition \ref{property1}, for $g_1 \in  \mathcal{H}_{p,p}(\Om)$ we get  $g(x)=|g_1(x)|^{\frac{p^*-q}{p^*-p}}\in \pq$, for  $q\in[p,p^*].$
\end{remark}

\begin{proposition}\label{property2}
Let $\Om$ and $g$ be as given in \eqref{products}. For $q\in (0,p]$, let  $g_1 \in \mathcal{H}_{p,q}(\Om_1)$ and $g_2\in L^{\frac{p}{p-q}}(\Om_2)$. Then $g(x)=g_1(y)g_2(z) \in \pq.$
\end{proposition}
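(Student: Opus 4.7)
The plan is to reduce the $(p,q)$-Hardy inequality on the product domain $\Om_1\times \Om_2$ to the hypothesized inequality on $\Om_1$ by slicing along $z$-sections. Fix $u\in \c1(\Om)$. For each fixed $z\in \Om_2$, the $z$-section $u_z(\cdot)=u(\cdot,z)$ lies in $\c1(\Om_1)$, so applying the assumption $g_1\in\mathcal{H}_{p,q}(\Om_1)$ to $u_z$ yields a pointwise-in-$z$ inequality
\[
\int_{\Om_1} |g_1(y)|\,|u(y,z)|^q\,\dy \;\le\; C\left(\int_{\Om_1}|\Gr_y u(y,z)|^p\,\dy\right)^{q/p}.
\]

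Next I multiply both sides by $|g_2(z)|$ and integrate over $\Om_2$. Tonelli's theorem identifies the left-hand side with $\int_\Om |g(x)|\,|u(x)|^q\,\dx$, so it remains to estimate
\[
\int_{\Om_2}|g_2(z)|\left(\int_{\Om_1}|\Gr_y u(y,z)|^p\,\dy\right)^{q/p}\dz.
\]
For this I apply H\"older's inequality in the variable $z$ to the conjugate pair $\bigl(p/q,\;p/(p-q)\bigr)$, with the convention that $p/(p-q)=\infty$ when $q=p$; in the latter case the step degenerates to the trivial $L^\infty$ bound $|g_2(z)|\le\|g_2\|_\infty$. This produces the factor $\|g_2\|_{L^{p/(p-q)}(\Om_2)}$ together with
\[
\left(\int_{\Om_2}\!\int_{\Om_1}|\Gr_y u(y,z)|^p\,\dy\,\dz\right)^{q/p}.
\]

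Finally, the pointwise bound $|\Gr_y u(y,z)|\le|\Gr u(y,z)|$ upgrades the integrand to the full gradient, yielding
\[
\int_\Om |g(x)|\,|u(x)|^q\,\dx \;\le\; C\,\|g_2\|_{L^{p/(p-q)}(\Om_2)}\left(\int_\Om |\Gr u(x)|^p\,\dx\right)^{q/p},
\]
which is the desired conclusion, establishing $g\in \pq$. There is no real obstacle here: the argument is a direct Fubini plus H\"older sandwich around the $y$-slice inequality; the only mild care needed is the interpretation $p/(p-q)=\infty$ at the endpoint $q=p$, and the elementary observation that compactly supported smooth functions on the product restrict to compactly supported smooth sections, which is what licenses the use of the hypothesis on $g_1$ slice by slice.
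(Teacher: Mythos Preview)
Your proof is correct and follows essentially the same route as the paper's own argument: apply the $(p,q)$-Hardy inequality for $g_1$ on each $z$-slice, integrate in $z$ against $|g_2(z)|$, use H\"older's inequality with exponents $\bigl(\tfrac{p}{q},\tfrac{p}{p-q}\bigr)$, and bound $|\Gr_y u|\le|\Gr u|$. The only cosmetic difference is that the paper invokes the gradient bound before the H\"older step rather than after; your explicit handling of the endpoint $q=p$ is a nice touch the paper leaves implicit.
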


\begin{proof}
Let $q\in (0,p]$ and $u \in \c1(\Om)$.  Since $g_1 \in \mathcal{H}_{p,q}(\Om_1)$ and $|\Gr_{y} u(y,z)| \le |\Gr u(y,z)|$, we easily obtain
\begin{align}\label{property2-1}
    \int_{\Om}  &|g_1(y)| |g_2(z)| |u(y,z)|^q \, \dy \dz \le C \int_{\Om_2} |g_2(z)| \left( \disone |\Gr_{y}   u(y,z)|^p \, \dy \right)^{\frac{q}{p}} \, \dz \no \\ &\le  C \int_{\Om_2} |g_2(z)| \left( \disone |\Gr u(y,z)|^p \, \dy \right)^{\frac{q}{p}} \, \dz \le C  \left( \int_{\Om} |\Gr u(y,z)|^p \, \dy \dz \right)^{\frac{q}{p}} \norm{g_2}_{\frac{p}{p-q}},
    \end{align}
     where the last inequality follows from the H\"{o}lder's inequality applied to the functions $|g_2(z)|$ and $f(z):=(\int_{\Om_1} |\Gr u(y,z)|^p \, \dy )^{\frac{q}{p}}$.
     Thus $g \in \mathcal{H}_{p,q}(\Om)$.
 \end{proof}

\begin{remark} Let $q\in (0,p]$, and $\Om_2$ be bounded if $q<p$. Then $g_2=1\in L^{\frac{p}{p-q}}(\Om_2)$ for every $q\in(0,p].$  For $q\in(0,p]$ and $g_1\in \mathcal{H}_{p,q}(\Om_1)$, define $g(x)=g_1(y).$ Then  by the above proposition,  $g\in \pq.$
\end{remark}

For $\Om= \Om_1 \times \Om_2$ with $\Om \subset \Rk$ and $\Om_2 \subset \R^{N-k}$, and for $u \in \c1(\Om)$, $u_z: \Om_2 \rightarrow \R$ is the $z$-section of $u$, defined as $u_z(y) := u(y,z)$. In the following proposition, we provide a sufficient condition for $g$ to be in $\pq$ on certain symmetric domains.

\begin{proposition}\label{observ1}
Let $p \in (1,N)$ and $1\le k\le N$. For $S \subset \S^{k-1}$ and $a,b \in (0, \infty]$, let $\Om= \Om_{a,b,S} \times \RNk$, $g$ be as given in \eqref{products}.  Assume that
\begin{enumerate}[(i)]
     \item  $\g_1 \in X_1:=L^1((a,b),r^{k-1}h(r))$, for some measurable function $h:(a,b) \mapsto (0, \infty)$, and  
     \item for some $\de>0$ and $\ga \ge 0 $ with $0< \ga + \de \le 1$, 
    $$ \g_2 \in X_2:=  \left\{\begin{array}{ll}
   L^{\frac{1}{1-(\ga+ \de)}}((0, \infty),r^{N-k-1}), \quad &\text{for} \ \ \ga + \de <1; \\
    L^{\infty}((0, \infty)), \quad &\text{for} \ \ \ga + \de=1.
 \end{array} 
 \right.$$  
    \item $C=C(k,p,q)>0$ be such that for each $z \in \RNk, r \in (a,b), \om \in S$, the following inequality holds:
    \begin{align} \label{eq:obs1}
    |u_z(r\om)|^q \le C h(r) \left(\int_a^b {\tau}^{k-1} |u_z(\tau \om)|^{p^*} \, \d \tau \right)^{\ga} \left(\int_a^b {\tau}^{k-1} |\Gr_y u_z(\tau \om)|^p \, \d \tau  \right)^{\de},
\end{align}
$ \forall  \, u\in \c1(\Om)$.
\end{enumerate}
Then $g = |g_1 g_2| \in \pq$ with $q= \de p + \ga p^* $, and 
\begin{align*}
    \dis |g(x)| |u(x)|^q \, \dx \le C \norm{\g_1}_{X_1} \norm{\g_2}_{X_2} \left( \dis |\Gr u(x)|^p \, \dx  \right)^{\frac{q}{p}}, \quad \forall  \, u \in \c1(\Om),  
\end{align*}
where $C=C(N,k,p,q)>0$.
\end{proposition}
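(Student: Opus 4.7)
The plan is to combine a polar decomposition in the $y$-variable with the pointwise bound (iii), H\"older's inequality applied first on the sphere $S$ and then on $\R^{N-k}$, and finally the classical Sobolev inequality in $\R^N$ (valid because $p<N$). A preliminary observation is that $|g_1(y)| \le \g_1(|y|)$ and $|g_2(z)| \le \g_2(|z|)$ almost everywhere, by the definition of the radial majorant, so $g_1$ and $g_2$ may be replaced throughout by their nonnegative radial majorants.

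First, passing to polar coordinates $y=r\om$ with $r\in(a,b)$, $\om\in S$, and applying (iii) to $|u(r\om,z)|^q$, the $r$-dependence separates, and the $r$-integral is precisely $\int_a^b r^{k-1}\g_1(r)h(r)\,\dr = \norm{\g_1}_{X_1}$. This yields
\begin{align*}
\int_\Om |g(x)||u(x)|^q \,\dx \le C\norm{\g_1}_{X_1}\int_{\R^{N-k}} \g_2(|z|) \int_S A(z,\om)^\ga B(z,\om)^\de \,\dS(\om)\,\dz,
\end{align*}
where $A(z,\om)=\int_a^b \tau^{k-1}|u_z(\tau\om)|^{p^*} \,\d\tau$ and $B(z,\om)=\int_a^b \tau^{k-1}|\Gr_y u_z(\tau\om)|^p \,\d\tau$.

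Next I would clear the $\om$ and $z$ integrals via two H\"older inequalities. On $S$, since $\ga+\de\le 1$, a three-factor H\"older with conjugate triple $(1/\ga,1/\de,1/(1-\ga-\de))$ (degenerating to a two-factor H\"older when $\ga+\de=1$) bounds $\int_S A^\ga B^\de \,\dS$ by $C(\int_S A\,\dS)^\ga (\int_S B\,\dS)^\de$, and unfolding polar coordinates recovers $\int_S A(z,\om)\,\dS(\om) = \int_{\Om_{a,b,S}}|u_z|^{p^*} \, \dy$ and $\int_S B(z,\om)\,\dS(\om) = \int_{\Om_{a,b,S}}|\Gr_y u_z|^p \, \dy$. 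A second H\"older over $z\in\R^{N-k}$---the same conjugate triple when $\ga+\de<1$ (observing that $\g_2(|\cdot|)\in L^{1/(1-\ga-\de)}(\R^{N-k})$ with norm a fixed multiple of $\norm{\g_2}_{X_2}$ by polar coordinates in $\R^{N-k}$), or simply pulling out $\norm{\g_2}_\infty$ when $\ga+\de=1$---together with the trivial inequality $|\Gr_y u|\le|\Gr u|$, produces
\begin{align*}
\int_\Om |g(x)||u(x)|^q \,\dx \le C\norm{\g_1}_{X_1}\norm{\g_2}_{X_2}\left(\int_\Om |u(x)|^{p^*} \,\dx\right)^\ga \left(\int_\Om |\Gr u(x)|^p \,\dx\right)^\de.
\end{align*}
The proof concludes by invoking the Sobolev inequality $\int_\Om |u|^{p^*}\,\dx \le C(\int_\Om|\Gr u|^p\,\dx)^{p^*/p}$ and using the identity $\ga p^*/p + \de = q/p$, which is exactly the relation $q=\de p+\ga p^*$.

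The only substantive bookkeeping obstacle is the choice of conjugate exponents in the two H\"older steps, most notably separating off the borderline case $\ga+\de=1$ where $\g_2\in L^\infty$ enters through a pointwise bound rather than a weighted integral, and tracking that the conversion between $L^{1/(1-\ga-\de)}((0,\infty),r^{N-k-1})$ and $L^{1/(1-\ga-\de)}(\R^{N-k})$ for the radial function $\g_2(|\cdot|)$ only introduces the factor $\sigma_{N-k-1}$. All other steps---the polar decomposition, the separation of the $r$-integral, and the replacement of $\Gr_y u$ by $\Gr u$---are routine.
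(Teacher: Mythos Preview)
Your proposal is correct and follows essentially the same approach as the paper: both arguments insert the pointwise bound (iii), apply H\"older on $S$ with the conjugate triple $(1/\ga,1/\de,1/(1-\ga-\de))$, integrate against $\g_1$ in the radial variable, apply a second H\"older over $\R^{N-k}$ against $\g_2$, and close with the Sobolev embedding. The only cosmetic difference is the order in which you carry out the $r$-integration and the $S$-integration, which is immaterial by Fubini.
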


\begin{proof}
We consider two cases separately.

\noi $\bm{\underline{\ga + \de \in (0,1)}}$: Let $\ga > 0$. By noting that $(\frac{1}{\ga}, \frac{1}{\de}, \frac{1}{1-\ga - \de})$ is a conjugate triple, we integrate both sides of \eqref{eq:obs1} over $S$ and apply the generalized H\"{o}lder's inequality to get 
\begin{align}\label{i1}
   \int_{S} |u_z(r\om)|^q \, \dS \le C h(r) \left(\int_{S} \dS \right)^{1- \de- \ga} & \left(\int_{S} \int_a^b  \tau^{k-1} |u_z(\tau \om)|^{p^*} \, \d \tau \dS  \right)^{\gamma} \notag \\  & \left(\int_{S}\int_a^b \tau^{k-1} |\Gr_y u_z(\tau\om)|^p \, \d \tau \dS \right)^{\de}.  \end{align}
Multiply the above inequality by $r^{k-1} \g_1(r)$ and integrate over $(a,b)$ to get
\begin{align*}
    \int_{\Om_{a,b,S}} \g_1(|y|) |u_z(y)|^q \, \dy  \le C \left(\int_a^b \g_1(r)  r^{k-1} h(r) \, \dr \right) & \left(\int_{\Om_{a,b,S}}  |u_z(y)|^{p^*} \dy \right)^{\ga} \\
    & \left(\int_{\Om_{a,b,S}} |\Gr_y u_z(y)|^p \, \dy \right)^{\de}, 
\end{align*}
where $C=C(N,k,p,q)>0$. Now multiply both sides by $\g_2(|z|)$, integrate over $\RNk$ and then apply the H\"{o}lder's inequality, so that the above inequality and the Fubini's theorem gives
\begin{align*}
    \dis \g_1(|y|) \g_2(|z|) |u(y,z)|^q \, \dx \le C & \left( \intRnk \g_2(|z| )^{\frac{1}{1-(\ga+ \de)}} \, \dz \right)^{1-(\ga+ \de)}  \left(\int_a^b \g_1(r)  r^{k-1} h(r) \, \dr \right) \\
    & \left(\dis  |u(y,z)|^{p^*} \dx \right)^{\ga}  \left(\dis |\Gr_{y} u(y,z)|^p \, \dx \right)^{\de}. \end{align*}
Further, using the embedding of $\Dp$ into $L^{p^*}(\Om)$,  
\begin{align*}
    \dis |g(x)| |u(x)|^q \, \dx  \le C & \left( \int_0^{\infty} \g_2(r)^{\frac{1}{1-(\ga+ \de)}} r^{N-k-1} \, \dr \right)^{1-(\ga+ \de)}  \left(\int_a^b \g_1(r)  r^{k-1} h(r) \, \dr \right) \\ & \left(\dis |\Gr u(x)|^{p} \, \dx \right)^{\de + \ga \frac{p^*}{p}}, \quad \forall  \, u \in \c1(\Om). \end{align*}
Therefore, $g \in \pq$. For $\ga=0$, by noting that $(\infty, \frac{1}{\de}, \frac{1}{1-\de})$ is a conjugate triple, from  the above calculations  we  also obtain $g \in \mathcal{H}_{p,q}(\Om)$.

\noi $\bm{\underline{\ga+\de=1}}$: In this case we have $\g_2 \in L^{\infty}((0, \infty))$ and $(\frac{1}{\ga}, \frac{1}{\de}, \infty)$ is a conjugate triple.  Then proof follows along the same lines except that in \eqref{i1} (there is no term with the exponent $1-\de-\ga$).
\end{proof}

The following proposition is analogous to Proposition \ref{observ1}, where we exchange the roll of $g_1$ and $g_2$. 
\begin{proposition}\label{observ2}
Let $p \in (1,N)$ and $1\le k\le N$. For $S \subset \S^{k-1}$ and $a,b \in (0, \infty]$, let $\Om= \Om_{a,b,S} \times \mathbb{R}^{N-k}$, $g$ be as given in \eqref{products}.  Assume that
\begin{enumerate}[(i)]
     \item  for some $\de>0$ and $\ga \ge 0 $ with $0< \ga + \de \le 1$, 
          $$ \g_1 \in X_1:= \left\{\begin{array}{ll}
   L^{\frac{1}{1-(\ga+ \de)}}((a,b),r^{k-1}), \quad &\text{for} \ \ \ga + \de <1; \\
    L^{\infty}((a,b)), \quad &\text{for} \ \ \ga + \de=1, 
 \end{array} 
 \right. $$  and $\g_2 \in X_2:=L^1((0,\infty),r^{N-k-1}h(r))$, for some measurable function $h:(0,\infty) \mapsto (0, \infty)$. 
    \item $C=C(N,k,p,q)>0$ be such that for each $y \in \Om_{a,b,S}$ and $(r,\om) \in (0, \infty) \times \S^{N-k-1}$ the following inequality holds:
    \begin{align*}
    |u_y(r\om)|^q \le C h(r) \left(\int_0^{\infty} \tau^{N-k-1} |u_y(\tau\om)|^{p^*} \d \tau \right)^{\ga} 
     \left(\int_0^{\infty} \tau^{N-k-1} |\Gr_z u_y(\tau\om)|^p \, \d \tau \right)^{\de}, 
\end{align*}
$\forall  \, u\in \c1(\Om)$. 
\end{enumerate}
Then $g = |g_1 g_2| \in \pq$ with $q= \de p + \ga p^* $, and 
\begin{align*}
    \dis |g(x)| |u(x)|^q \, \dx \le C \norm{\g_1}_{X_1} \norm{\g_2}_{X_2} \left( \dis |\Gr u(x)|^p \, \dx  \right)^{\frac{q}{p}}, \quad \forall  \, u \in \c1(\Om),  
\end{align*}
where $C=C(N,k,p,q)>0$.
\end{proposition}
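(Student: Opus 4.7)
The plan is to mirror the proof of Proposition~\ref{observ1} with the roles of $g_1,g_2$ (and the $y$/$z$ variables) exchanged, so the polar decomposition is now performed in the $z$-variable over $\R^{N-k}$ while the $y$-integration plays the role of the ``outer'' Hölder step. First I would fix $u\in\c1(\Om)$ and, for each fixed $y\in\Om_{a,b,S}$, start from the pointwise hypothesis~(ii) applied to $u_y(r\om)$ on $(0,\infty)\times \S^{N-k-1}$. I then integrate in $\om\in\S^{N-k-1}$ and apply the generalized Hölder inequality with the conjugate triple $\left(\tfrac{1}{\ga},\tfrac{1}{\de},\tfrac{1}{1-\ga-\de}\right)$ in the case $\ga+\de<1$, keeping the $L^\infty$-factor when $\ga+\de=1$, to obtain an estimate of the form
\begin{align*}
\int_{\S^{N-k-1}} |u_y(r\om)|^q \, \dS
\le C\, h(r)\,
\Bigl(\textstyle\int_{\R^{N-k}}|u_y(z)|^{p^*}\,\dz\Bigr)^{\ga}
\Bigl(\textstyle\int_{\R^{N-k}}|\Grz u_y(z)|^p\,\dz\Bigr)^{\de}.
\end{align*}

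Next I would multiply both sides by $r^{N-k-1}\,\g_2(r)$ and integrate over $r\in(0,\infty)$, which, after passing to Cartesian coordinates on $\R^{N-k}$, yields
\begin{align*}
\int_{\RNk}\g_2(|z|)\,|u_y(z)|^q\,\dz
\le C \Bigl(\textstyle\int_0^{\infty}\g_2(r)\,r^{N-k-1}h(r)\,\dr\Bigr)
\Bigl(\textstyle\int_{\RNk}|u_y(z)|^{p^*}\dz\Bigr)^{\ga}
\Bigl(\textstyle\int_{\RNk}|\Grz u_y(z)|^p\,\dz\Bigr)^{\de}.
\end{align*}
Now I would multiply by $\g_1(|y|)$ and integrate over $y\in\Om_{a,b,S}$, applying the generalized Hölder inequality in the $y$-variable with the same conjugate triple $(\tfrac{1}{\ga},\tfrac{1}{\de},\tfrac{1}{1-\ga-\de})$ (and respectively $(\tfrac{1}{\ga},\tfrac{1}{\de},\infty)$ when $\ga+\de=1$); switching to polar coordinates in $y$ brings out the factor $\|\g_1\|_{X_1}$. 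By Fubini the resulting double integrals can be written as single integrals over $\Om$, producing the bounds $\int_\Om|u|^{p^*}\dx$ and $\int_\Om|\Grz u|^p\dx$.

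Finally, I would use $|\Grz u|\le|\Gr u|$ together with the Sobolev embedding $\Dp\hookrightarrow L^{p^*}(\Om)$ (valid since $p\in(1,N)$) to estimate both remaining factors by powers of $\int_\Om|\Gr u|^p\dx$, obtaining the total exponent $\de+\ga\,\tfrac{p^*}{p}=\tfrac{q}{p}$ since $q=\de p+\ga p^*$. This gives both $g\in\pq$ and the quantitative inequality with constant $C\|\g_1\|_{X_1}\|\g_2\|_{X_2}$.

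The proof is essentially a ``transposed'' version of Proposition~\ref{observ1}, so the main technical point, rather than a real obstacle, is bookkeeping: making sure that the conjugate triple is applied in the correct variable (first on $\S^{N-k-1}$, then on $\Om_{a,b,S}$), that Fubini is used legitimately (justified because every integrand is nonnegative), and that the boundary case $\ga+\de=1$ is handled by omitting the $1-\ga-\de$ factor and replacing the corresponding $L^{1/(1-\ga-\de)}$-norm by the $L^\infty$-norm of $\g_1$, exactly as in the proof of Proposition~\ref{observ1}.
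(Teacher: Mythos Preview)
Your proposal is correct and matches the paper's approach exactly: the paper itself simply states that the proof ``follows from the similar set of arguments as given in the proof of Proposition~\ref{observ1},'' and your outline carries out precisely that transposition, swapping the roles of the $y$ and $z$ variables and of $(\g_1,X_1)$ with $(\g_2,X_2)$. The bookkeeping you flag (conjugate triple first on $\S^{N-k-1}$, then on $\Om_{a,b,S}$; the $\ga+\de=1$ boundary case; Fubini on nonnegative integrands) is handled correctly.
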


\begin{proof}
Proof follows from the similar set of arguments as given in the proof of Proposition \ref{observ1}.
\end{proof}
\section{The $(p,q)$-Hardy potentials ($k=N$)}
This section considers the case $k=N$ and identifies various Lorentz spaces and weighted Lebesgue spaces in $\pq$. This section contains the proof of Theorem \ref{Symmetrization} and Theorem \ref{weighted Lebesgue}. The well-definedness of $\Dp$ for the case $N \le p$ is also discussed in this section. First, we prove Theorem \ref{Symmetrization}.

\noi {\bf{Proof of Theorem \ref{Symmetrization}}:}
$(i)$ and $(ii)$: For $N \ge p$ and $q\in(0,p^*]$ (if $N>p$), $q \in (0, \infty)$ (if $N=p$), let $g, X$ be as given in Theorem \ref{Symmetrization}. Then using Proposition \ref{HSLo} and Proposition \ref{HSLo-1}, there exists $C=C(N,p,q)>0$ such that 
\begin{align*}
    \displaystyle \int_0^{|\Om|} g^*(\tau) {u^*(\tau)}^q \, \d \tau \le C \norm{g}_{X}  \left( \int_{0}^{|\Om|} {\tau}^{p - \frac{p}{N}} | {u^*}^{\prime}(\tau)|^p \, \d \tau \right)^{\frac{q}{p}}, \quad  \forall  \, u \in \c1(\Om).
\end{align*}  
Now using the Hardy-Littlewood and P\'{o}lya-Szeg\"{o} inequality (Proposition \ref{HL and PS}), we conclude that $g \in \pq$ and 
\begin{align*}
    \dis |g(x)| |u(x)|^q \,\dx \le C \norm{g}_X \left( \dis |\Gr u(x)|^p \, \dx \right)^{\frac{q}{p}}, \quad  \forall  \, u \in \c1(\Om).
\end{align*}

\noi $(iii)$ Let $N<p$, $q \in (0,\infty)$, $\Om$ be an one-sided bounded domain and $g \in L^1(\Om)$. Then
\begin{align*}
\int_{\Om} |g(x)| |u(x)|^q \ \dx \leq \norm{u}_{L^{\infty}(\Om)}^q \int_{\Om} |g(x)| \, \dx, \quad \forall  \, u \in \c1(\Om).   \end{align*}
Now using the embedding $\Dp \hookrightarrow L^{\infty}(\Om)$ (for $N<p$), $g \in \pq.$ 
\qed

\begin{remark}  $(i)$ Let $N>p$. Notice that, $\al(p,q)\le \frac{p}{p-q}, \;\text{for }\,q\in (0,p), \text{  and  } \al(p,q)\le \infty,\text{ for } q\in [p,p^*].$ Therefore, as a consequence of Theorem \ref{Symmetrization} and the inclusions of the Lorentz spaces ($(iii)$ of Proposition \ref{Lorentz properties}) we obtain the following Lebesgue spaces in $\pq$: $$L^{\al(p,q)}(\Om)\subset \pq, \quad \text{ for }  q\in (0,p^*].$$

\noi $(ii)$ Using the inclusions of Lorentz spaces, we have 
     $$\pq \supset \left\{ \begin{array}{ll}
          L^{\al(p,q),s}(\Om), & \quad  \text{ for }  q\in (0,p), s \in \left[1,\frac{p}{p-q}\right]; \\
             L^{\al(p,q), s}(\Om), & \quad  \text{ for }  q \in [p,p^*], s\in [1,\infty]. \\
              \end{array}\right.$$
              
\noi $(iii)$ If $\Om$ is a bounded domain and $q\in (0,p^*]$, then we have $$L^{r,s}(\Om)\subset \pq, \quad r \in [\al(p,q),\infty], s\in [1,\infty].$$
In particular, $L^r(\Om)\subset \pq$ for $r\in [\al(p,q),\infty].$
\end{remark}

Now we identify various weighted Lebesgue spaces in $\pq$.
Recall that, for $S \subset \S^{N-1}$ and $a,b\in [0,\infty]$ with $a<b$, we set $\Om_{a,b,S}= int\left(\{x\in \RN: a \leq |x| < b, \frac{x}{|x|}\in S \ \text{if} \ x \neq 0 \}\right).$  

\noi \textbf{Proof of Theorem \ref{weighted Lebesgue}:} $(i)$ Let $\Om= \Om_{a,b,S}$ and $u \in \c1(\Om)$. For $\tau \in (a,b)$ and $\om \in S$, using the polar decomposition we define $\vph(\tau)= u(\tau \omega)$. We consider three separate cases.

\noi $\bm{ \underline{ q \in (0, p)}}$: Using the fundamental theorem of integration, we write 
\begin{align*}
\vph(r) = - \displaystyle \int_r^b \vph'(\tau) \, \d \tau = - \int_r^b \vph'(\tau)  \tau ^{\frac{N-1}{p}} \tau^{\frac{1-N}{p}} \, \d \tau.
\end{align*}
By the H\"{o}lder's inequality, 
\begin{align*}
    |\vph(r)| \le \left(\int_r^b \tau^{\frac{1-N}{p-1}} \, \d \tau  \right)^{\frac{1}{\p}} \left(\int_r^b \tau^{N-1} |\vph'(\tau)|^p \, \d \tau \right)^{\frac{1}{p}}.
\end{align*}
The above inequality yields
\begin{align*}
|\vph(r)|^q \leq \left( \frac{p-1}{N-p} \right)^{\frac{q}{\p}} r^{\frac{(p-N)q}{p}} \left( \int_r^b \tau^{N-1} | \vph'(\tau) |^p \, \d \tau \right)^{\frac{q}{p}}.
\end{align*}
Set $C = \left( \frac{p-1}{N-p} \right)^{\frac{q}{\p}}$. Now $\vpp(\tau) = \Gr u(\tau\omega) \cdot \omega.$ Hence for each $\om \in S$, 
\begin{align}\label{radial 1}
|u(r\om)|^q \leq C r^{\frac{(p-N)q}{p}} \left( \int_a^b \tau^{N-1} | \Gr u(\tau \om)|^p \, \d \tau \right)^{\frac{q}{p}}, \quad \forall \, u \in \c1(\Om).
\end{align}
Thus for $q \in (0, p)$ and $\g \in L^1((a, b), r^{\frac{N}{\al(p,q)} - 1})$, by taking $\ga = 0$, $\de = \frac{q}{p}$, and $h(r)=r^{\frac{(p-N)q}{p}}$ one can verify that all the assumptions of Proposition \ref{observ1} are satisfied. Therefore, $g \in \pq$ for $q \in (0,p)$.

\noi $\bm{ \underline{ q \in  \left[ p,P^*(1) \right)}}$: In this case, for $q \ge p$ using the fundamental theorem of integration,
\begin{align*}
|\varphi(r)|^q =  - \int_r^b \frac{\d}{\d \tau} |\vph(\tau)|^q \, \d \tau  & = - q \displaystyle \int_r^b |\varphi(\tau)|^{q-1} \vpp(\tau) \, \d \tau \\
& = - q \displaystyle \int_r^b |\varphi(\tau)|^{q -1} \vpp(\tau) \tau^{\frac{N-1}{p}}\tau^{\frac{1-N}{p}} \,  \d \tau.
 \end{align*}  
Using the generalized H\"{o}lder's inequality we estimate the right hand side of the above identity as 
\begin{align}\label{radial 2}
|\varphi(r)|^q \le q \left(  \int_r^b  \tau^{dp_1} \, \d \tau \right)^{\frac{1}{p_1}}   \left(  \int_r^b  \tau^{N-1} |\varphi(\tau)|^{(q-1)p_2} \,  \d \tau \right)^{\frac{1}{p_2}} \left( \int_r^b \tau^{N-1} \abs{\vpp(\tau)}^p \, \d \tau \right)^{\frac{1}{p}},
\end{align}
where $(p,p_1,p_2)$ is a conjugate triple and $d = (1-N)( \frac{1}{p} + \frac{1}{p_2})$. We set $p_2 = \frac{p^*}{q-1}$ and hence $p_1 = \frac{Np}{N(p-q) + qp -p}.$ Now the first integral of \eqref{radial 2} can be estimated as
\begin{align*}
   \int_r^b  \tau^{d p_1} \, \d \tau  \le  -\frac{r^{dp_1+1}}{dp_1+1},
\end{align*}
where  $dp_1 + 1 = \frac{qp_1(p-N)}{p}<0$. Set $C=-\frac{1}{dp_1 + 1}$. Thus for each $\om \in S$, \eqref{radial 2} yields 
\begin{align}\label{radial 3}
    |u(r \om)|^q \leq C  q r^{\frac{q(p-N)}{p}}  \left(  \int_a^b  \tau^{N-1} |u(\tau \om)|^{p^*} \,  \d \tau \right)^{\frac{1}{p_2}} \left( \int_a^b \tau^{N-1} \abs{\Gr u(\tau \om)}^p\,  \d \tau \right)^{\frac{1}{p}},
\end{align}
$ \forall \, u \in \c1(\Om)$. For $q \in [p, P^*(1))$ and $\g \in L^1((a, b), r^{\frac{N}{\al(p,q)} - 1})$, by taking  $\ga = \frac{1}{p_2}$, $\de = \frac{1}{p}$, and $h(r)=r^{\frac{q(p-N)}{p}}$, one can verify that all the assumptions of Proposition \ref{observ1} are satisfied. Therefore, $g \in \pq$ for $q \in [ p,P^*(1))$.

\noi $\bm{\underline{ q \in  \left[P^*(1), p^* \right]}}$: Let $\g$ be strictly decreasing. Then for $u \in \cc(\Om)$, using the Hardy-Littlewood inequality (Proposition \ref{HL and PS}) and Remark \ref{rearrangement for radial}, we get
\begin{equation} \label{radial 4}
    \dis \g(|x|) |u(x)|^q \, \dx \leq \int_0^{|\Om|} \g (\om_N^{-\frac{1}{N}} r^{\frac{1}{N}}) u^{*}(r)^q \, \dr. 
\end{equation}
For $u \in \cc(\Om)$, by P\'{o}lya-Szeg\"{o} inequality (Proposition \ref{HL and PS}) $u^* \in W^{1,p}((0,|\Om|))$ and hence $u^*$ is absolutely continuous. Thus using the fundamental theorem of integration, we write
\begin{align}
\no {u^*(r)}^{q} = - q \displaystyle \int_{r}^{\infty} {u^*(\tau)}^{q -1} {u^*}'(\tau) \, \d \tau = - q \displaystyle \int_{r}^{\infty} {u^*(\tau)}^{q -1} {u^*}'(\tau) \tau^{\frac{N-1}{N}}\tau^{\frac{1-N}{N}} \,  \d \tau.
 \end{align}
 Therefore, by the  H\"{o}lder's inequality, 
\begin{align}\label{radial 5}
{u^*(r)}^{q} \le q \left(  \int_{r}^{\infty}  \tau^{\frac{1-N}{N} \p} {u^*(\tau)}^{(q-1)\p} \, \d \tau \right)^{\frac{1}{\p}}   \left( \int_{r}^{\infty}  \tau^{\frac{N-1}{N}p} \abs{{u^*}'(\tau)}^p\,  \d \tau \right)^{\frac{1}{p}}.
\end{align}
 Notice that, for $q \ge P^*(1)$, $\frac{(q-1)\p}{p^*} \ge 1$ and also $\frac{1-N}{N} - \frac{q-1}{p^*} +\frac{1}{\p} = \frac{1}{\al(p,q)}-1.$  Now we estimate the first integral of \eqref{radial 5} using Proposition \ref{Mazya1} as shown below:
\begin{align*}
\no & \frac{p^*}{(q-1)\p}  \int_r^{\infty} \tau^{\frac{1-N}{N} \p }\left( {u^*(\tau)}^{p^*} \right)^{\frac{(q-1)\p}{p^*}} \, \d \tau = \int_r^{\infty} \tau^{\frac{1-N}{N} \p-{\frac{(q-1)\p}{p^*}+1}}  ({u^*(\tau)}^{p^*})^{\frac{(q-1)\p}{p^*}} \,{\rm d}\left(\tau^{\frac{(q-1)\p}{p^*}}\right) \\
    & \le  r^{\frac{1-N}{N}\p - \frac{(q-1)p'}{p^*} +1}
\int_r^{\infty} ({u^*(\tau)}^{p^*})^{\frac{(q-1)\p}{p^*}} \, {\rm d}\tau^{\frac{(q-1)\p}{p^*}}\le r^{(\frac{1}{\al(p,q)}-1)p'}  \left( \int_0^{\infty} {u^*(\tau)}^{p^*}\d \tau \right)^{\frac{(q-1)p'}{p^*}}. 
\end{align*}
Hence using \eqref{radial 5} we obtain
\begin{align*}
  {u^*(r)}^{q} \le C  r^{\frac{1}{\al(p,q)}-1} \left( \int_0^{\infty} {u^*(\tau)}^{p^*} \, \d \tau \right)^{\frac{q-1}{p^*}}  \left( \int_{r}^{\infty}  \tau^{\frac{N-1}{N}p} \abs{{u^*}'(\tau)}^p\,  \d \tau \right)^{\frac{1}{p}},
\end{align*}
where $C=C(N,p,q)>0$. Next multiply the above inequality by $\g (\om_N^{-\frac{1}{N}} r^{\frac{1}{N}})$ and integrate over $(0,\infty)$ to get
\begin{align*}
    \int_0^{\infty} \g (\om_N^{-\frac{1}{N}} r^{\frac{1}{N}}) {u^*(r)}^{q} \, \dr \le \frac{C}{N\om_N^{\frac{1}{N}}}& \left( \int_0^{\infty} \g (\om_N^{-\frac{1}{N}} r^{\frac{1}{N}}) r^{\frac{1}{\al(p,q)}-1} \, \dr \right) \\   & \times \left( \int_0^{\infty} {u^*(\tau)}^{p^*} \, \d \tau \right)^{\frac{q-1}{p^*}}  \left( \int_{r}^{\infty}  \tau^{\frac{N-1}{N}p} \abs{{u^*}'(\tau)}^p\,  \d \tau \right)^{\frac{1}{p}}.
\end{align*}
By change of variable, 
\begin{align*}
    \int_{0}^{\infty}  \g(\om_N^{-\frac{1}{N}} r^{\frac{1}{N}})     r^{\frac{1}{\al(p,q)}-1}  \, \dr = \om_N^{\frac{1}{N}+\frac{1}{\al(p,q)}-1} \int_{0}^{\infty} \tau^{\frac{N}{\al(p,q)}-1} \g(\tau) \, \d \tau.
\end{align*}
Therefore, using the P\'{o}lya-Szeg\"{o} inequality (Proposition \ref{HL and PS}) and \eqref{radial 4}, we obtain that
\begin{align*}
    \dis \g(|x|) |u(x)|^q \, \dx \le C \left(  \int_{0}^{\infty} r^{\frac{N}{\al(p,q)}-1} \g(r) \, \dr \right) &\left( \dis |u(x)|^{p^*} \, \dx \right)^{\frac{q-1}{p^*}} 
    \left( \intRn |\Gr u(x)|^p \, \dx \right)^{\frac{1}{p}},
\end{align*}
for some $C= C(N,p,q)>0$. Further, the embedding $\dpp(\Om) \hookrightarrow L^{p^*}(\Om)$ gives
\begin{align*}
    \dis \g(|x|) |u(x)|^q \, \dx \le C \left(  \int_{0}^{\infty} r^{\frac{N}{\al(p,q)}-1} \g(r) \, \dr \right) \left( \dis |\Gr u(x)|^p \, \dx \right)^{\frac{q}{p}}, \quad \forall \, u \in \cc(\Om).
\end{align*}
Therefore, $g \in \pq$ for $q \in [P^*(1),p^*]$.

\noi $(ii)$ and $(iii)$: Let $\Om=\Om_{a,b,S}$ with $a>0$ if $N=p$ and $a \ge 0$ if $N<p$. For $\tau \in (a,b)$ and $\om \in S$, we define $\vph(\tau)= u(\tau \omega)$. Then we express
\begin{align*}
\vph(r) = \displaystyle \int_a^r \vph'(\tau) \, \d \tau =  \int_a^r \vph'(\tau)  \tau^{\frac{N-1}{p}} \tau^{\frac{1-N}{p}} \, \d \tau \,.
\end{align*}
Thus,
\begin{align*}
    |\vph(r)| \le \left(\int_a^r \tau^{\frac{1-N}{p-1}} \, \d \tau  \right)^{\frac{1}{\p}} \left(\int_a^r \tau^{N-1} |\vph'(\tau)|^p \, \d \tau \right)^{\frac{1}{p}} \,,
\end{align*}
and for each $\om \in S,$ 
\begin{align*}
|u(r\om)|^q \le \left\{\begin{array}{ll}
    \displaystyle \left( \frac{p-1}{p-N} \right)^{\frac{q}{\p}} r^{\frac{(p-N)q}{p}} \left( \int_a^b \tau^{N-1} | \Gr u(\tau\om) |^p \, \d \tau \right)^{\frac{q}{p}}, \quad &\text{if} \ \ N<p;  \\
    \displaystyle \left( \log \left( \frac{r}{a} \right) \right)^{\frac{q}{\p}} \left( \int_a^b \tau^{N-1} |\Gr u(\tau\om)|^p \, \d \tau \right)^{\frac{q}{p}}, \quad &\text{if} \ \ N=p,
 \end{array} 
 \right. 
\end{align*}
$\forall \, u \in \c1(\Om)$. Clearly, by taking $h(r)=r^{\frac{(p-N)q}{p}}$ (if $N<p$), $\left( \log \left( \frac{r}{a} \right) \right)^{\frac{q}{\p}}$ (if $N=p$),  $\ga = 0$, and $\de=\frac{q}{p}$, all the assumptions of Proposition \ref{observ1} are satisfied. Therefore, $g \in \pq$ for $q\in (0,p]$. \qed
\begin{remark} $(i)$ Suppose  $\Om_0 \subset B_a[x_0]^c$  for some $x_0 \in \R^N$ where $a>0$ for $N=p$, and $a=0$ for $N<p$. Let $g_0\in L^1_{loc}(\Om_0)$. Take $\Om=-x_0+\Om_0$ and $g(x)=g_0(x+x_0)$ for $x\in \Om.$ Now, if the zero extension of  $g$ to $ B_a[0]^c$ satisfies  one of the assumptions  of the above theorem, then it is easy to see that $g_0\in \mathcal{H}_{p,q}(\Om_0).$

\noi $(ii)$ For a measurable function $h: \Om_{a,b,S} \mapsto \R$ with $\h(r) \leq \g(r)$, $\forall \, r \in (a,b)$, where $\g:(a,b) \mapsto \R$ as in Theorem \ref{weighted Lebesgue}, we  get $h \in \mathcal{H}_{p,q}(\Om_{a,b,S})$. In this way, for $q \in [P^*(1), p^* ]$, we can relax the strictly monotonicity (decreasing) of $\h$.
\end{remark}
 
\noi \textbf{Proof of Corollary \ref{function space}}: Let $N \le p$ and $\Om=\RN \setminus B_a[0]$. We choose 
\begin{align*}
    w(r) := \left\{ \begin{array}{ll}
         \left(r^{N+1} \log(\frac{r}{a})^{(N-1)}\right)^{-1}, & \quad  \text{ for }  N=p, a>0; \vspace{0.2 cm} \\
             (1+r)^{-(p+1)}, & \quad  \text{ for }  N<p, a=0. \\
              \end{array}\right.
\end{align*}
It is easy to see that $w\in L^1((a, \infty), (r\log ( \frac{r}{a} ) )^{N-1})$ (if $N=p$) and $w \in L^1((0,\infty),r^{p - 1})$ (if $N<p$). Therefore, by Theorem \ref{weighted Lebesgue}, 
\begin{align*}
   \int_{K} |u(x)|^p \, \dx \le \left(\frac{1}{\displaystyle \inf_{x \in K} w(|x|)} \right)\dis w(|x|) |u(x)|^p \, \dx \le C \dis |\Gr u(x)|^p \, \dx, \quad \forall \, u \in \Dp,  
\end{align*}
where $K \subset \Om$ is compact and $C=C(N,p,K)>0$. Further, $|\Gr u| \in L^p(\Om)$ for $u \in \Dp$. Hence from the above inequality we get $\Dp \subset W_{loc}^{1,p}(\Om)$, and 
\begin{align*}
     \int_{K} \left( |u(x)|^p + |\Gr u(x)|^p \right) \, \dx \le C \dis |\Gr u(x)|^p \, \dx, \quad \forall \, u \in \Dp,
\end{align*}
where $C=C(N,p,K)>0$.  \qed

\section{The cylindrical $(p,q)$-Hardy potentials}
In this section, we identify product of functions from certain Lorentz spaces and weighted Lebesgue spaces in $\mathcal{H}_{p,q}(\Om_1 \times \Om_2)$, where $\Om_1 \subset \Rk$ and $\Om_2 \subset \RNk$. This section contains the proof of Theorem \ref{cylin C-K-N}, Theorem \ref{Lorentz product}, and  Theorem \ref{weighted Lebesgue product}.

\noi \textbf{Proof of Theorem \ref{cylin C-K-N}:} \label{Proof 1.10} 
For $S \subset \S^{k-1}$, let $\Om_1=\Om_{a,b,S}$, and $\Om, g$ be as in \eqref{products}.  For $u \in \c1(\Om)$, as before, we let $u_z(y)= u(y,z), \, \forall  \, y \in \Om_{a,b,S}$. For a fixed $\om\in S,$  define $\vph(r) = u_z(r \om)$ for $r \in (a,b)$ and $\vph(r)=0$ for $0\le r\le a$ and for $r\ge b$. 

\noi $(i)$ Our proof follows the similar arguments as in the proof of \cite[Theorem 2.1]{BT}.  By fundamental theorem of integration,  for  $q>1$ and $r\in (a,b),$ we can write 
\begin{align*}
|\vph(r)|^q = -\int_1^{\infty} \frac{\d}{\dl} |\vph(\la r)|^q \, \dl & = -q \int_1^{\infty} |\vph(\la r)|^{q-1} \vpp(\la r) r \, \dl.
\end{align*}
Now $\vph'(r)=\Gr_y u_z(r\om) \cdot \om$. Thus for each $\om \in S$ and $z \in \RNk$, 
\begin{align*}
    |u_z(r \om)|^q = - q \int_1^{\infty} |u_z(\la r \om)|^{q-1} \Gr_y u_z(\la r \om) \cdot \om r \, \dl.
\end{align*}
Next we multiply both sides of the above inequality  by $r^{k-1-s}$ and integrate over $\Om_2 \times S \times (a,b)$ to obtain
\begin{align*}
  \dis &  \frac{|u(x)|^q}{|y|^s} \, \dx \le q \int_1^{\infty} \dl \int_{\Om_2} \dz \int_{S} \dS \int_a^b r^{k-1-s} |u(\la r \om, z)|^{q-1}  |\Gr_{y} u(\la r \om, z)| r  \, \dr \no \\
  & = q \int_1^{\infty} \dl \int_{\Om_2} \dz \int_{S} \dS  \int_{\la a}^{\la b} \left( \frac{\rho}{\la} \right)^{k-s} |u(\rho \om, z)|^{q-1}  |\Gr_{y} u(\rho \om, z)| \, \frac{\d \rho}{\la} \no \\
  & \le q \int_1^{\infty} \frac{\dl}{\la^{k+1-s}} \int_{\Om_2} \dz \int_{S} \dS \int_{ a}^b {\rho}^{k-s}  |u(\rho \om, z)|^{q-1}  |\Gr_{y} u(\rho \om, z)| \, \d \rho, 
\end{align*}  
where the last inequality holds since $\la >1$ and  $u(r \om,z)=0$ for $ r\ge b$. Observe that, \begin{equation} \label{qineq}
\displaystyle \int_1^{\infty} \frac{\dl}{\la^{k+1-s}}<\infty \Longleftrightarrow    s<k  \Longleftrightarrow  \frac{N(p-q)+qp}{p}<k \Longleftrightarrow q> \frac{p(N-k)}{N-p}=P^*(k).
\end{equation}  Thus for $q>P^*(k)$ and $s=\frac{N}{\al(p,q)},$ we get 
\begin{equation}\label{eq:H1}
    \dis \frac{|u(x)|^q}{|y|^s}   \, \dx  \le C \dis  \frac{1}{|y|^{s-1}}  |u(y, z)|^{q-1}  |\Gr u(y, z)|  \, \dx.
\end{equation}
Next we estimate  the right hand side of \eqref{eq:H1} for  $ q\in [p, P^*(1)]$ i.e., for $s \in [1,p]$. If $s\in (1,p)$, then one can verify that  $(\frac{s}{s-1},\frac{sp}{p-s}, p)$ is a conjugate triple and  $$\left(\frac{q}{s}-1 \right)\frac{sp}{p-s} = p\frac{q-\frac{N}{\al(p,q)}}{p-\frac{N}{\al(p,q)}} = p\frac{Nq-Np}{p^2 - Np+Nq-pq} = \frac{Np}{N-p}.$$ Now using the H\"older's inequality we get
 \begin{align}
 \dis  \frac{1}{|y|^{s-1}}  |u(y, z)|^{q-1}  |\Gr u(y, z)| & \, \dx  
 = \dis  \frac{1}{|y|^{s-1}}  |u(x)|^{q(\frac{s-1}{s})+\frac{q}{s}-1}  |\Gr u(x)|  \, \dx \no \\
& \le  \left( \dis \frac{|u(x)|^q}{|y|^s}  \, \dx \right)^{\frac{s-1}{s}} \left( \dis |u(x)|^{p^*} \, \dx \right)^{\frac{p-s}{sp}} \left( \dis \abs{\Gr u(x)}^p \, \dx \right)^{\frac{1}{p}}\no \\
& \le C \left( \dis \frac{|u(x)|^q}{|y|^s}  \, \dx \right)^{\frac{s-1}{s}} \left( \dis \abs{\Gr u(x)}^p \, \dx \right)^{\frac{q}{ps}},\label{eq:Tarantello1}
\end{align}
where the last inequality follows since $(\frac{p-s}{sp}) \frac{p^*}{p} + \frac{1}{p} = \frac{N-s}{(N-p)s}=\frac{q}{ps}.$  If $s=1$ or $s=p$, then the above conjugate triple become $(\infty,p',p)$ or $(p',\infty,p)$ and the similar estimates as above yields \eqref{eq:Tarantello1} for $s=1,p.$ Further, if $s<k$ (equivalently, if  $q>P^*(k)$), then \eqref{eq:H1} yields 
\begin{align}\label{eq:H2}
    \dis \frac{|u(x)|^q}{|y|^{\frac{N}{\al(p,q)}}}   \, \dx  \le C \left( \dis \abs{\Gr u(x)}^p \, \dx \right)^{\frac{q}{p}}, \quad \forall\, u \in \c1(\Om).
\end{align}
Therefore, 
\begin{equation}\label{eq:1}
    |y|^{-\frac{N}{\al(p,q)}} \in \pq, \quad q\in \left[p, P^*(1)\right] \text{ with } q> P^*(k).
\end{equation}
\noi Next we consider $q \in (P^*(1),p^*).$ In this case one can write $q= tP^*(1)+ (1-t)p^*$ for some $t\in (0,1).$  Therefore,
$$t=\frac{q-p^*}{P^*(1)-p^*}=\frac{(N-p)q-Np}{-p}=\frac{N}{\al(p,q)}=s.$$
Now we apply the H\"older's inequality and use \eqref{eq:H2} for $q=P^*(1)$ to get
\begin{align*}
    \dis  \frac{|u(x)|^q}{|y|^s} \dx & \le \left( \dis \frac{|u(x)|^{P^*(1)}}{|y|} \, \dx  \right)^{s} \left(  \dis |u(x)|^{p^*} \, \dx \right)^{1-s} \le C \left(  \dis |\Gr u(x)|^p \, \dx \right)^{\frac{q}{p}},
\end{align*}
$ \forall \, u \in \c1(\Om)$. For $q=p^*,$ the above inequality follows from the Sobolev inequality. Therefore, 
\begin{equation}\label{eq:2}
    |y|^{-\frac{N}{\al(p,q)}} \in \pq, \quad q\in \left (P^*(1),p^* \right].
\end{equation}
Now by \eqref{eq:1}  and \eqref{eq:2}, we conclude that 
$$    |y|^{-\frac{N}{\al(p,q)}} \in \pq,  \text{ for} \;
    q \in \left\{ \begin{array}{ll}
    [p,p^*],   & \  k >p;  \\
    (P^*(k),p^*],  & \ k \leq p. 
\end{array}\right.
$$

\noi $(ii)$ Let $\Om = \Om_{a,b,S} \times \Om_2$ where $0 \not \in \Om_{a,b,S}$. Let $u \in \c1(\Om)$ and $\vph, s$ be as in $(i).$ Then for $q>1$ we write 
\begin{align*}
   |\vph(r)|^q = \int_0^1 \frac{\d}{\dl} |\vph(\la r)|^q \, \dl = q \int_0^1 |\vph(\la r)|^{q-1} \vpp(\la r) r \, \dl.
\end{align*}
Thus for each $\om \in S$ and $z \in \RNk$,
\begin{align*}
 |u_z(y)|^q = q \int_0^1 |u_z(\la r \om)|^{q-1} \Gr_y u_z(\la r \om) \cdot \om r \, \dl.
\end{align*}
Multiply the above inequality $r^{k-1-s}$ and integrate over $\Om$ to get
\begin{align*}
   \dis & \frac{|u(x)|^q}{|y|^s} \, \dx \le  q \int_0^1 \dl \int_{\Om_2}  \dz \int_S \dS \int_a^b  r^{k-1-s} |u( \la r \om, z)|^{q-1}  |\Gr_y u( \la r \om, z)| r \, \dr \\
   & = q \int_0^1 \dl \int_{\Om_2} \dz \int_S \dS \int_{\la a}^{\la b}  \left( \frac{\rho}{\la} \right)^{k-1-s} |u(\rho \om,z)|^{q-1}  |\Gr_y u(\rho \om,z)| \frac{\rho}{\la} \, \frac{\d \rho}{\la}  \\
   & \le q \left( \int_0^1 \frac{\dl}{\la^{k+1-s}} \right) \int_{\Om_2} \dz \int_S \dS \int_a^{b}  {\rho}^{k-1-s} |u(\rho \om,z)|^{q-1}  |\Gr_y u(\rho \om,z)|  \rho \, \d \rho,
\end{align*}
where the last inequality holds since $\la <1$ and $u(r \om,z)=0$ for $r\le a$. Notice that
\begin{equation*}
  \int_0^1 \frac{\dl}{\la^{k+1-s}}< \infty \Longleftrightarrow s>k \Longleftrightarrow q < P^*(k).  
\end{equation*} 
Thus for $q<P^*(k)$ the above inequality yields 
\begin{equation*}
    \dis \frac{|u(x)|^q}{|y|^s} \, \dx  \le C \dis  \frac{1}{|y|^{s-1}}  |u(x)|^{q-1}  |\Gr u(x)| \, \dx.
\end{equation*}
Now, for $q \in [p, P^*(k))$ we estimate the right hand side of the above inequality as before. Clearly, in this range of $q$ we have $s \in (k,p]$. Moreover, since $k \geq 1$, we also have $s \in (1,p]$ and hence following the arguments that gives \eqref{eq:H2}, we obtain 
\begin{align}\label{eq:H4}
   \dis \frac{|u(x)|^q}{|y|^\frac{N}{\al(p,q)}} \, \dx  \le C \left( \dis \abs{\Gr u(x)}^p \, \dx \right)^{\frac{q}{p}}, \quad \forall  \, u \in \c1(\Om).
\end{align}
Therefore, $|y|^{-\frac{N}{\al(p,q)}} \in \pq$ for $q \in [p, P^*(k))$. Next we consider $q = P^*(k)$. In this case, we have $\frac{N}{\al(p,q)}=k$  and $\frac{p(q-k)}{p-k}=p^*$. For $k<p$, and $u \in \c1(\Om)$, we apply the H\"older's inequality with the conjugate pair $(\frac{p}{k},\frac{p}{p-k})$ to get
\begin{align*}
\int_{\Om} \frac{|u(x)|^q}{|y|^{\frac{N}{\al(p,q)}}} \ \dx & = \int_{\Om} \frac{|u(x)|^k}{|y|^k} |u(x)|^{q-k} \ \dx \leq \left( \int_{\Om} \frac{|u(x)|^p}{|y|^p} \ \dx\right)^{\frac{k}{p}}  \left( \int_{\Om} |u(x)|^{p^*} \ \dx\right)^{\frac{p-k}{p}}.
\end{align*}
Further, using \eqref{eq:H4}, $|y|^{-p} \in \pp$. Therefore, using the embedding $\Dp \hookrightarrow L^{p^*}(\Om)$, we get the right hand side of the above inequality is lesser than $\left(\int_{\Om} |\nabla u|^p \right)^{\frac{q}{p}}$. Thus, $|y|^{-\frac{N}{\al(p,q)}} \in \pq$ for $q=P^*(k)$. \qed

\noi \textbf{Proof of Corollary \ref{gen cylin C-K-N}:} Let $g(x) = g_1(y)$ and $\g_1 \in L^{\infty}( (a,b),r^{\frac{N}{\al(p,q)}}).$ If $|y|^{-\frac{N}{\al(p,q)}} \in \pq$, then
\begin{align*}
\dis |g(x)||u(x)|^q \, \dx & \le \esssup_{y \in \Om_1} \g_1(|y|) |y|^{\frac{N}{\al(p,q)}}  \dis \frac{|u(x)|^q}{|y|^{\frac{N}{\al(p,q)}}}  \, \dx  \\ & \le C\norm{\g_1}_{L^{\infty}( (a,b),r^{\frac{N}{\al(p,q)}})} \left( \dis \abs{\Gr u(x)}^p \, \dx \right)^{\frac{q}{p}}, \quad \forall  \, u \in \c1(\Om),
\end{align*}
for some $C=C(N,k,p,q)>0$, i.e.,  $g \in \pq$. Thus the same conclusions of Theorem \ref{cylin C-K-N} hold for $g$ in place of $|y|^{-\frac{N}{\al(p,q)}}.$

Next we proceed to prove Theorem \ref{Lorentz product}. For proving Theorem \ref{Lorentz product} we require the following proposition: 
\begin{proposition}\label{cylin Lorentz prop}
Let $p \in (1,N)$. Let $\Om=\Om_1\times \Om_2$ where $\Om_1$ and $\Om_2$ be any two open sets in $\Rk$ and $\RNk$ respectively. Let $\Om_1$ be bounded if $k \leq p$, and $\Om$ be bounded for $q\in (0,p)$. Let $t\in[0,1]$ and 
\begin{align*}
    g_1 \in X := \left\{\begin{array}{ll}
    L^{\frac{k}{tp},\infty}(\Om_1), \quad &\text{if} \ \ k>p; \\
    L^{\frac{1}{t},\infty,t(k-1)} (\Om_1), \quad &\text{if} \ \ k=p; \\
    L^{\frac{1}{t}}(\Om_1), \quad &\text{if} \ \ k<p.
 \end{array} 
 \right. 
\end{align*}
Then for $q=tp+(1-t)p^*$ or  $q=tp$ with $t>0,$  $g(x)=g_1(y) \in \pq.$  Moreover, 
\begin{align*}
    \dis |g_1(y)| |u(x)|^q \, \dx \le C \norm{g_1}_X \left(  \dis |\Gr u(x)|^p \, \dx \right)^{\frac{q}{p}}, \quad \forall  \, u \in \c1(\Om),
\end{align*}
for some $C=C(N,k,p,q)>0$.
\end{proposition}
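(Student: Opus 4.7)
The plan is to reduce the proposition to the scalar $(p,p)$-Hardy inequality on $\Omega_1 \subset \mathbb{R}^k$ already handled by Theorem \ref{Symmetrization}, via the substitution $g_1 \mapsto h := |g_1|^{1/t}$, and then to interpolate the resulting Hardy inequality against the trivial Hardy potentials $1 \in \mathcal{H}_{p,p^{*}}(\Omega)$ (Sobolev) and $1 \in L^1(\Omega)$ (when $\Omega$ is bounded) via Proposition \ref{property1}. The scaling motivation is the identity $\alpha(p, tp+(1-t)p^{*}) = N/(tp)$: the extremal weight $|y|^{-N/(tp)}$ raised to the power $1/t$ becomes $|y|^{-N/p}$, the critical weight for $\mathcal{H}_{p,p}$ in dimension $k$ (where $\alpha(p,p) = k/p$).

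I focus on $t \in (0,1]$; the degenerate case $t = 0$ (only relevant for $q = p^{*}$) reduces to $g_1 \in L^{\infty}(\Omega_1)$ and follows immediately from the Sobolev inequality. First, I verify that the power map $g_1 \mapsto h$ lands in the correct space on $\Omega_1$: Proposition \ref{Lorentz properties}(i) for $k \neq p$ and Proposition \ref{LZ prop}(i) for $k = p$ yield
\begin{align*}
h \in \left\{ \begin{array}{ll} L^{k/p,\infty}(\Omega_1), & k > p, \\ L^{1,\infty;k-1}(\Omega_1), & k = p, \\ L^1(\Omega_1), & k < p, \end{array} \right.
\end{align*}
with quasi-norm bounded by $C\|g_1\|_X^{1/t}$. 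These are precisely the admissible spaces identified by Theorem \ref{Symmetrization} (with ambient dimension $k$) as subsets of $\mathcal{H}_{p,p}(\Omega_1)$: indeed $\alpha(p,p) = k/p$, and the Lorentz--Zygmund third index $q-1$ from Theorem \ref{Symmetrization}(ii) specializes to $p-1 = k-1$ when $q = p = k$. The boundedness hypotheses on $\Omega_1$ for $k \leq p$ match those required by Theorem \ref{Symmetrization}. Hence $h \in \mathcal{H}_{p,p}(\Omega_1)$ with Hardy constant controlled by $C\|g_1\|_X^{1/t}$.

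Next, I lift this scalar Hardy inequality from $\Omega_1$ to the product $\Omega = \Omega_1 \times \Omega_2$ via Proposition \ref{property2} with $g_2 \equiv 1 \in L^{\infty}(\Omega_2)$; this is essentially a Fubini/slicing step that upgrades $h \in \mathcal{H}_{p,p}(\Omega_1)$ to $h \in \mathcal{H}_{p,p}(\Omega)$ with the same constant. Finally I write $g_1 = h^t \cdot 1^{1-t}$ and interpolate. For $q = tp + (1-t)p^{*}$, Proposition \ref{property1}(i) applied to $(h, 1) \in \mathcal{H}_{p,p}(\Omega) \times \mathcal{H}_{p, p^{*}}(\Omega)$ (the second inclusion being Sobolev, available since $p < N$) yields $g_1 \in \mathcal{H}_{p,q}(\Omega)$; for $q = tp$ with $\Omega$ bounded, Proposition \ref{property1}(ii) applied to $(h, 1) \in \mathcal{H}_{p,p}(\Omega) \times L^1(\Omega)$ yields the same conclusion. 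Tracking the constants through the H\"older interpolation, the exponent $(1/t) \cdot t = 1$ restores linear dependence on $\|g_1\|_X$ in the final bound.

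The principal technical obstacle is the $k = p$ regime: Proposition \ref{LZ prop}(i) must be invoked with exponent $\gamma = 1/t$ to send $L^{1/t, \infty; t(k-1)}(\Omega_1)$ into $L^{1, \infty; k-1}(\Omega_1)$, and one must verify that the third-index shift $t(k-1) \cdot (1/t) = k-1$ matches exactly the third index $q/p' = p-1 = k-1$ singled out by Theorem \ref{Symmetrization}(ii). Once this identification is secured, the rest is a direct chaining of previously established results.
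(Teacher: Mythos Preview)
Your proposal is correct and follows essentially the same approach as the paper: set $h=|g_1|^{1/t}$, verify via Propositions \ref{Lorentz properties}(i) and \ref{LZ prop}(i) that $h$ lands in the admissible space for Theorem \ref{Symmetrization} on $\Omega_1\subset\mathbb{R}^k$, lift to $\Omega$ by Fubini, and interpolate against $1\in\mathcal{H}_{p,p^*}(\Omega)$ (respectively $1\in L^1(\Omega)$) via H\"older. The only cosmetic difference is that you invoke the packaged Propositions \ref{property1} and \ref{property2} for the interpolation and slicing steps, whereas the paper performs the H\"older split \eqref{cylinnn 3} and the Fubini estimate inline; one minor imprecision is that for $q=p=k$ the relevant branch of Theorem \ref{Symmetrization}(ii) is the one with index $q/p'$ rather than $q-1$, but these coincide at $q=p$ so the conclusion is unaffected.
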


\begin{proof} 
Let $t \in [0,1]$ and $q=tp+(1-t)p^*$. For $t=0$, we have $q=p^*$ and $g_1 \in L^{\infty}(\Om_1)$. Hence the proof follows from the Sobolev embedding. For $t\in (0,1]$, and $u \in \c1(\Om)$, we apply the H\"{o}lder's inequality to get
\begin{align}\label{cylinnn 3}
  \int_{\Om} |g_1(y)||u(x)|^q   \, \dx \le  \left( \int_{\Om} |g_1(y)|^{\frac{1}{t}} |u(x)|^p \, \dx  \right)^{t} \left( \int_{\Om} |u(x)|^{p^*} \, \dx \right)^{1-t}.
\end{align}
Using $(i)$ of Proposition \ref{Lorentz properties} and Proposition \ref{LZ prop}, \begin{align*}    |g_1|^{\frac{1}{t}} \in Y= \left\{\begin{array}{ll}
    L^{\frac{k}{p},\infty}(\Om_1), \quad &\text{if} \ \ k>p; \\
    L^{1,\infty,k-1} (\Om_1), \quad &\text{if} \ \ k=p; \\
    L^{1}(\Om_1), \quad &\text{if} \ \ k<p,
 \end{array} \right. 
\end{align*}
and $\norm{|g_1|^{\frac{1}{t}}}_{Y} \le C \norm{g_1}_{X}^{\frac{1}{t}}$ for some $C>0$. Consequently, by Theorem \ref{Symmetrization}, it follows that 
$|g_1|^{\frac{1}{t}} \in \mathcal{H}_{p,p}(\Om_1)$ and
\begin{align*}
    \int_{\Om_2} \int_{\Om_1} |g_1(y)|^{\frac{1}{t}} |u(y,z)|^p \, \dy \dz & \le C \norm{|g_1|^{\frac{1}{t}}}_{Y} \int_{\Om_2} \int_{\Om_1} |\Gr_{y} u(y,z)|^p \, \dy \dz \\
    & \le C \norm{g_1}_{X}^{\frac{1}{t}} \int_{\Om_2} \int_{\Om_1} |\Gr u(y,z)|^p \, \dy \dz,
\end{align*}
where $C=C(k,p,q)>0$. Therefore, using the embeddings of $\Dp$ into $L^{p^*}(\Om)$ in \eqref{cylinnn 3}, we get
\begin{align*}
  \dis |g_1(y)| |u(x)|^q \, \dx & \le C  \norm{g_1}_{X}\left( \dis |\Gr u(x)|^p \, \dx \right)^{t + \frac{(1-t)p^*}{p}} 
   = C \norm{g_1}_{X} \left( \dis |\Gr u(x)|^p \, \dx \right)^{\frac{q}{p}},
\end{align*}
$\forall \, u \in \c1(\Om)$ and for some $C=C(N,k,p,q)>0$. For $q=tp$ with $t\in(0,1]$, using the boundedness of $\Om$, the result follows along the same line except that in \eqref{cylinnn 3} (there is no term with the exponent $1-t$).
\end{proof}

\begin{remark}
Let $t \in [0,1]$ be such that $q = tp+(1-t)p^*$. Then 
$$ \frac{k}{tp}= \frac{k}{p} \left( \frac{p^*}{p^*-q} - \frac{p}{p^*-q}  \right)= k \al(p,q) \left( \frac{1}{p} - \frac{1}{p^*} \right)= \frac{\al(p,q)k}{N}.$$
Further, from $(i)$ of Example \ref{ex rearrangement}, $g_1(y)=\abs{y}^{-\frac{N}{\al(p,q)}} \in L^{\frac{\al(p,q)k}{N},\infty}(\Rk)$, for $k > \frac{N}{\al(p,q)}$. Moreover, it is easy to see that for $q \ge p$, $p \ge \frac{N}{\al(p,q)}$. Thus for $k>p$ and $q \in [p,p^*]$, from the above proposition we get the following generalized cylindrical C-K-N inequality: 
\begin{align*}
    \intRn |g_1(y)| |u(x)|^q \, \dx \le C \norm{g_1}_{\frac{\al(p,q)k}{N}, \infty} \left( \intRn |\Gr u(x)|^p \, \dx \right)^{\frac{q}{p}}, \quad \forall  \, u \in \c1(\RN).
\end{align*}
\end{remark}

\noi \textbf{Proof of Theorem \ref{Lorentz product}:} \label{proof 1.7}  We consider two cases. 

\noi $\bm{\underline{q \in (0,p)}}$: Let $s,t \in (0,1)$. Let  $k>p$, $g_1 \in L^{\frac{k}{(k-p)st+p}, \frac{1}{st}}(\Om_1)$ and $g_2 \in L^{\frac{1}{st}}(\Om_2)$. By part $(i)$ of Theorem \ref{Symmetrization}, we have $g_1 \in  \mathcal{H}_{p,q}(\Om_1)$ with $q=(1-st)p$. Hence Proposition \ref{property2} infers $|g_1 g_2| = g \in \pq$. Moreover, using \eqref{eqn:Lorentz} and \eqref{property2-1}, we obtain  
\begin{align*}
    \int_{\Om}  |g(x)| |u(y,z)|^q \,  \dy \dz  & \le C \norm{g_1}_{\frac{k}{(k-p)st+p}, \frac{1}{st}} \int_{\Om_2} |g_2(z)| \left( \disone |\Gr_{y}   u(y,z)|^p \, \dy \right)^{1-st} \, \dz\\ & \le C \norm{g_1}_{\frac{k}{(k-p)st+p}, \frac{1}{st}} \norm{g_2}_{\frac{1}{st}} \left( \int_{\Om} |\Gr u(y,z)|^p \, \dy \dz \right)^{1-st}, \quad \forall \, u \in \c1(\Om),
    \end{align*}
where $C=C(k,p,q)>0$.

\noi $\bm{\underline{q \in [p,p^*]}}$: Let $s \in [0,1]$ and write $q=(1-st)p+stp^*$. If $t=0$, then we have $q=p$, $N-k>p$, $g_1 \in L^{\infty}(\Om_1)$ and $g_2 \in L^{\frac{N-k}{p},\infty}(\Om_2)$. By interchanging the roles of $\Om_1$ and $\Om_2$,  we obtain from Proposition \ref{cylin Lorentz prop} that  
\begin{align*}
  \dis |g_1(y)| |g_2(z)| |u(x)|^p \, \dx & \le C \norm{g_1}_{\infty}\norm{g_2}_{\frac{N-k}{p}, \infty} \dis |\Gr u(x)|^p \, \dx, \quad \forall \, u \in \c1(\Om).
\end{align*}
For $t=1$, we have $k>p,$ $q=(1-s)p+sp^*$, $g_1 \in L^{\frac{k}{(1-s)p},\infty}(\Om_1)$ and $g_2 \in L^{\infty}(\Om_2).$ Thus, again by Proposition \ref{cylin Lorentz prop},  
\begin{align*}
  \dis |g_1(y)| |g_2(z)| |u(x)|^q \, \dx & \le C \norm{g_1}_{\frac{k}{(1-s)p},\infty}\norm{g_2}_{\infty} \left( \dis |\Gr u(x)|^p \, \dx \right)^\frac{q}{p}, \quad \forall \, u \in \c1(\Om).
\end{align*}
 Next we consider the case $t \in (0,1)$ and $q=(1-st)p+stp^*$. In this case, we have both $k>p$ and $N-k>p$, $g_1 \in L^{\frac{k}{(1-s)pt}, \infty}(\Om_1)$ and $g_2 \in L^{\frac{N-k}{(1-t)p}, \infty}(\Om_2)$. Then $|g_1|^\frac{1}{t}\in L^{\frac{k}{(1-s)p}, \infty}(\Om_1)$ and $|g_2|^\frac{1}{1-t} \in L^{\frac{N-k}{p}, \infty}(\Om_2)$. Hence by Proposition \ref{cylin Lorentz prop}, we obtain that $|g_1|^\frac{1}{t} \in \H_{p,q_1}(\Om)$ with $ q_1=(1-s)p+(1-(1-s))p^*=(1-s)p+sp^*$ and $|g_2|^\frac{1}{1-t}\in \H_{p,p}(\Om)$. 
 Therefore, $(i)$ of Proposition \ref{property1} assures that $g = |g_1 g_2| = \abs{\abs{g_1}^{\frac{1}{t}}}^t \abs{\abs{g_2}^{\frac{1}{1-t}}}^{1-t} \in \H_{p,q}(\Om)$ since
 $$q=tq_1+(1-t)q_2=t((1-s)p+sp^*)+(1-t)p=(1-st)p+stp^*.$$ 
Moreover, using the H\"{o}lder's inequality and Proposition \ref{cylin Lorentz prop}, 
 \begin{align*}
    \dis |g_1(y)||g_2(z)||u(x)|^q \,  \dx & \le \left( \dis  |g_1(y)|^{\frac{1}{t}}  |u(x)|^{q_1} \, \dx \right)^t \left( \dis |g_2(z)|^{\frac{1}{1-t}} |u(x)|^p \, \dx  \right)^{1-t} \\
    & \le C \norm{g_1}_{\frac{k}{(1-s)tp}, \infty} \norm{g_2}_{\frac{N-k}{(1-t)p}, \infty} \left( \dis |\Gr u(x)|^p \, \dx \right)^{\frac{q}{p}}, \quad \forall \, u \in \c1(\Om),
\end{align*}
where $C=C(N,k,p,q)>0$.
\qed

Now we prove Theorem \ref{weighted Lebesgue product}.

\noi \textbf{Proof of Theorem \ref{weighted Lebesgue product}:}
Let $\Om = \Om_{a,b,S} \times \RNk$ and $\g_1, \g_2$ be as in Theorem \ref{weighted Lebesgue product}. Let $u \in \c1(\Om)$. For a fixed $\om \in S$ and $\tau \in (a,b)$, let $\vph(\tau)= u_z(\tau\omega)$.

\noi $\bm{\underline{q \in (0,p]}}$:  We express
\begin{align*}
&|\vph(r)|  \\
& = \left\{ \begin{array}{cc}
   \displaystyle \left| \int_a^r \vph'(\tau)  \tau^{\frac{k-1}{p}} \tau^{\frac{1-k}{p}} \, \d \tau \right| \le \left(\int_a^r \tau^{\frac{1-k}{p-1}} \, \d \tau  \right)^{\frac{1}{\p}} \left(\int_a^r \tau^{k-1} |\vph'(\tau)|^p \, \d \tau \right)^{\frac{1}{p}}, \; & \text{if} \ \ k < p; \\
   \displaystyle  \left|- \int_r^b \vph'(\tau)  \tau^{\frac{k-1}{p}} \tau^{\frac{1-k}{p}} \, \d \tau \right| \le \displaystyle \left(\int_r^b \tau^{\frac{1-k}{p-1}} \, \d \tau\right)^{\frac{1}{\p}} \left( \int_r^b \tau^{k-1} |\vph'(\tau)|^p \, \d \tau\right)^{\frac{1}{p}}, \;  & \text{if} \ \ k>p.
\end{array}
\right.  
\end{align*}
As $\vph'(\tau)=\Gr_y u_z(\tau\om) \cdot \om$ for each $\om \in S$ and $z \in \RNk$, we get
\begin{align}\label{sec1}
|u_z(r\om)|^q \le \left\{\begin{array}{ll}
    \displaystyle \left( \frac{p-1}{p-k} \right)^{\frac{q}{\p}} r^{\frac{(p-k)q}{p}} \left( \int_a^b \tau^{k-1} | \Gr_y u_z(\tau\om) |^p \, \d \tau \right)^{\frac{q}{p}}, \quad &\text{if} \ \ k<p;  \\
    \displaystyle \left( \frac{p-1}{k-p} \right)^{\frac{q}{\p}} r^{\frac{(p-k)q}{p}} \left( \int_a^b \tau^{k-1} | \Gr_y u_z(\tau\om)|^p \, \d \tau \right)^{\frac{q}{p}}, \quad &\text{if} \ \ k>p,
 \end{array} 
 \right. 
\end{align}
$\forall  \, u \in \c1(\Om)$. Thus for $q \in (0,p]$, by taking $\ga=0, \de = \frac{q}{p}$, and $h(r)= r^{\frac{(p-k)q}{p}}$ one can verify that the assumptions of Proposition \ref{observ1} are satisfied. Therefore, $g \in \pq$ for $q \in (0,p]$.

\noi $\bm{\underline{q \in (p, P^*(1)]}}$:  We write $q = tp + (1-t)P^*(1)$ for some $t \in [0,1)$. 
First, we consider $t=0$ (i.e., $q= P^*(1)$), $\g_1 \in L^{\infty}((a,b))$ and $\g_2 \in L^1((0, \infty))$. Let $u_y(z) = u(y,z)$ be the $y$-section of $u$. For a fixed $\om \in \S^{N-k-1}$, set $\vph_1(\tau)= u_y(\tau\omega)$ where $\tau \in (0,\infty)$. Then we make the following estimate: 
\begin{align*}
|\vph_1(r)|^{P^*(1)} & = - P^*(1) \displaystyle \int_r^{\infty} |\vph_1(\tau)|^{P^*(1)-1} \vph'_1(\tau) \tau^{\frac{N-k-1}{p}}\tau^{\frac{1+k-N}{p}} \,  \d \tau  \no \\
& \le P^*(1) \left( \int_r^{\infty}  \tau^{\frac{1+k-N}{p-1}} |\vph_1(\tau)|^{\left(P^*(1) - 1\right)\p} \, \d \tau \right)^{\frac{1}{\p}}\left( \int_r^{\infty} \tau^{N-k-1} \abs{\vph'_1(\tau)}^p\,  \d \tau \right)^{\frac{1}{p}} \\
& \le P^*(1) r^{1+k-N} \left( \int_r^{\infty}  \tau^{N-k-1} |\vph_1(\tau)|^{p^*} \, \d \tau \right)^{\frac{1}{\p}}\left( \int_r^{\infty} \tau^{N-k-1} \abs{\vph'_1(\tau)}^p\,  \d \tau \right)^{\frac{1}{p}}.
\end{align*}
From the above inequality, for each $\om \in \S^{N-k-1}$ and $y \in \Om_{a,b,S}$, we get
\begin{align*}
    |u_y(r\om)|^{P^*(1)} \le P^*(1) {r}^{1+k-N} & \left( \int_0^{\infty} \tau^{N-k-1} |u_y(\tau\om)|^{p^*} \, \d \tau  \right)^{\frac{1}{\p}} \\
    & \left( \int_0^{\infty} \tau^{N-k-1} \abs{\Gr_z u_y(\tau\om)}^p\, \d \tau \right)^{\frac{1}{p}}, \quad \forall \, u \in \c1(\Om). 
\end{align*}
For $q=P^*(1), \g_1 \in L^{\infty}((a,b))$ and $\g_2 \in L^1((0, \infty))$, by taking $\ga= \frac{1}{\p}, \de = \frac{1}{p}$, and $h(r)= r^{1+k-N}$, the assumptions of Proposition \ref{observ2} are satisfied. Therefore, $g = |g_1 g_2| \in \mathcal{H}_{p,P^*(1)}(\Om)$ and
\begin{align}\label{lebesgue cylin-1}
    \dis |g_1(y)| |g_2(z)| |u(x)|^{P^*(1)} \, \dx \le C \norm{\g_1}_{L^{\infty}((a,b))} & \norm{\g_2}_{L^1((0, \infty))} \left( \dis |\Gr u(x)|^p \, \dx \right)^{\frac{P^*(1)}{p}},
\end{align}
$\forall \, u \in \c1(\Om)$ and for some $C=C(N,k,p,q)>0$. For $q=p$, we have 
\begin{eqnarray} \label{lebesgue cylin-2}
    \dis |g_1(y)| |g_2(z)| |u(x)|^p \, \dx \le C \norm{\g_1}_{L^1((a,b), r^{p-1})} \norm{\g_2}_{L^{\infty}((0, \infty))} \dis |\Gr u(x)|^p \, \dx, 
\end{eqnarray}
$\forall \, u \in \c1(\Om)$ and for some $C=C(N,k,p)>0$. Now we consider $t \in (0,1)$, $\g_1 \in L^{\be(p,q)}((a,b), r^{p-1})$ and $\g_2 \in L^{\frac{\be(p,q)}{\be(p,q)-1}}((0,\infty))$. Since
\begin{align*}
    t = \frac{q-P^*(1)}{p- P^*(1)} = \frac{{p^*}-{N'}q}{{p^*}-{N'}p} = \frac{1}{\be(p,q)},
\end{align*} 
we have $\g_1^{\frac{1}{t}} \in L^1((a,b), r^{p-1})$ and $\g_2^{\frac{1}{1-t}} \in L^1((0, \infty))$. Hence from \eqref{lebesgue cylin-2} and \eqref{lebesgue cylin-1}, we get $|g_1|^{\frac{1}{t}} \in \pp$ and $|g_2|^{\frac{1}{1-t}} \in \mathcal{H}_{p,P^*(1)}(\Om)$ respectively. 
Therefore, using $(i)$ of Proposition \ref{property1}, we conclude that $g=|g_1 g_2| \in \pq$ with $q=tp+(1-t)P^*(1)$. Moreover, there exists $C=C(N,k,p,q)>0$ such that 
\begin{align*}
    \dis |g(x)| |u(x)|^q \, \dx  & \le \left( \dis |g_1(y)|^{\frac{1}{t}} |u(x)|^p \, \dx \right)^{t} \left( \dis |g_2(z)|^{\frac{1}{1-t}} |u(x)|^{P^*(1)} \, \dx \right)^{1-t} \\
    & \le C \norm{\g_1^{\frac{1}{t}}}_{L^1((a,b), r^{p-1})}^t \norm{\g_2^{\frac{1}{1-t}}}_{L^1((0, \infty))}^{1-t} \left( \dis |\Gr u(x)|^p \, \dx \right)^{\frac{q}{p}} \\
    & = C \norm{\g_1}_{L^{\be(p,q)}((a,b), r^{p-1})}  \norm{\g_2}_{L^{\frac{\be(p,q)}{\be(p,q)-1}}((0,\infty))} \left( \dis |\Gr u(x)|^p \, \dx \right)^{\frac{q}{p}}, \, \forall \, u \in \c1(\Om).
\end{align*}

\noi $\bm{\underline{q \in ( P^*(1), p^*]}}$: 
Let $q= t P^*(1) + (1-t)p^*$ for some  $t \in [0,1)$. Then $t = \frac{N}{\al(p,q)}$. For $t \in [0,1)$, $\g_1 \in L^{\infty}((0,\infty))$ and $\g_2 \in  L^{\frac{\al(p,q)}{N}}((a,b))$, using the H\"{o}lder's inequality, \eqref{lebesgue cylin-1} and the embedding $\Dp \hookrightarrow L^{p^*}(\Om)$, we similarly get $g= |g_1g_2| \in \pq$ for $q \in (P^*(1), p^*]$. We also get  
\begin{align*}
    \dis  |g(x)| |u(x)|^q \, \dx  \le C \norm{\g_1}_{L^{\infty}((0,\infty))} \norm{\g_2}_{L^{\frac{\al(p,q)}{N}}((a,b))}  \left( \dis \abs{\Gr u(x)}^p \, \dx \right)^{\frac{q}{p}}, \quad \forall \, u \in \c1(\Om),
\end{align*}
for some $C=C(N,k,p,q)>0$.
\qed

\section{Compactness and the existence of solution}
 In this section, we prove the compactness of $G_q$ for $g$ as given in Theorem \ref{cpct1} and then show the existence of solutions to the problem \eqref{eqn:evp}. First, we prove the following compactness result:

\begin{proposition}\label{locally embedd}
Let $\Om$ be as given in \eqref{domain}. Then $\Dp $ is compactly embedded into $ L^q_{loc}(\Om)$ for $q \in (0, \de)$, where $\de=p^*$ (if $N>p$), and $\de=\infty$ (if $N \le p).$ 
\end{proposition}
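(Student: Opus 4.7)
The plan is to reduce the claim to the classical Rellich--Kondrachov theorem on bounded Lipschitz subdomains, using Corollary \ref{function space} to handle the delicate cases $N \le p$.

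First, I would establish that $\mathcal{D}^{1,p}_0(\Om)$ is continuously embedded in $W^{1,p}_{loc}(\Om)$ for $\Om$ as in \eqref{domain}. For $N>p$, this is a direct consequence of the Sobolev embedding $\mathcal{D}^{1,p}_0(\Om)\hookrightarrow L^{p^*}(\Om)$ together with H\"older's inequality on any compact $K\subset \Om$. For $N\le p$, the same embedding is exactly the content of Corollary \ref{function space}. In either case, for every compact set $K\subset \Om$ there is a constant $C_K>0$ with
\begin{equation*}
\int_K \bigl(|u|^p+|\Gr u|^p\bigr)\,\dx \,\le\, C_K \int_\Om |\Gr u|^p\,\dx, \quad \forall \, u\in \mathcal{D}^{1,p}_0(\Om).
\end{equation*}

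Next, let $\{u_n\}\subset \mathcal{D}^{1,p}_0(\Om)$ be a bounded sequence, and fix a compact $K\subset \Om$. I would choose a bounded open set $\Om'$ with Lipschitz boundary such that $K\subset \Om'\cset \Om$. By the previous step, $\{u_n\}$ is bounded in $W^{1,p}(\Om')$. Applying the classical Rellich--Kondrachov compact embedding on the Lipschitz domain $\Om'$, namely $W^{1,p}(\Om')\hookrightarrow\hookrightarrow L^r(\Om')$ for every $r\in[1,p^*)$ (if $N>p$), every $r\in[1,\infty)$ (if $N=p$), and every $r\in[1,\infty]$ (if $N<p$), one extracts a subsequence converging in $L^r(\Om')$, in particular in $L^r(K)$, for any $r\in[1,\de)$.

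A standard exhaustion of $\Om$ by compact sets $K_j\nearrow \Om$ combined with a diagonal extraction then yields a subsequence converging in $L^r_{loc}(\Om)$ for each such $r$. Finally, to cover the range $q\in(0,1)$, one uses that for any compact set $K$ and any $r\ge 1$ H\"older's inequality gives
\begin{equation*}
\|v\|_{L^q(K)}\le |K|^{\frac{1}{q}-\frac{1}{r}}\,\|v\|_{L^r(K)},
\end{equation*}
so convergence in $L^1_{loc}(\Om)$ (which is already obtained above) automatically implies convergence in $L^q_{loc}(\Om)$. There is no real obstacle beyond the $N\le p$ case, which is handled precisely by invoking Corollary \ref{function space}; everything else is routine once the domain has been localized to a bounded Lipschitz subset.
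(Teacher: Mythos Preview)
Your proposal is correct and follows essentially the same route as the paper's proof: both first establish $\Dp\hookrightarrow W^{1,p}_{loc}(\Om)$ (via the Sobolev embedding when $N>p$ and via Corollary \ref{function space} when $N\le p$), then invoke Rellich--Kondrachov, and finally use H\"older's inequality to cover $q\in(0,1)$. Your version is slightly more explicit about localizing to a Lipschitz subdomain and performing a diagonal extraction, but there is no substantive difference in strategy.
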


\begin{proof}
For $u \in \Dp$, $|\Gr u| \in L^p(\Om)$. If $N>p$, then using the Sobolev embedding, we also obtain $u\in L_{loc}^p(\Om)$. In particular, for each $K$ compact set in $\Om,$ there exists $C=C(N,p,K) >0$ such that
\begin{align}\label{compact1}
    \int_{K} \left( |u(x)|^p + |\Gr u(x)|^p \right) \, \dx \le C \dis |\Gr u(x)|^p \, \dx, \quad \forall \, u \in \Dp.
\end{align}
Thus $\Dp \hookrightarrow W^{1,p}_{loc}(\Om)$. If $N \le p$, then using Corollary \ref{function space}, $\Dp \hookrightarrow W^{1,p}_{loc}(\Om)$. Further, by Rellich-Kondrachov compactness theorem, 
\begin{equation}\label{eq:loc}
W^{1,p}_{loc}(\Om) \hookrightarrow L^q_{loc}(\Om) \mbox{ compactly for  } q \in \left\{ \begin{array}{ll}
         [1,p^*], & \quad  \text{for} \ \ N>p; \\
         \intervals{[1,\infty)} & \quad  \text{for} \ \ N \le p.
\end{array}\right. 
\end{equation}
For $q\le 1,$   $u\in L^1_{loc}(\Om),$   using the H\"{o}lder's inequality with the conjugate pair $(\frac{1}{q}, \frac{1}{1-q})$, we get
\begin{align*}
     \int_K |u(x)|^q \, \dx  \le \left( \int_K |u(x)| \, \dx \right)^q |K|^{1-q},
 \end{align*}
 for every compact set $K$ in $\Om.$ Therefore, $L^1_{loc}(\Om) \hookrightarrow L^q_{loc}(\Om)$ and hence by \eqref{eq:loc}, we conclude that $W^{1,p}_{loc}(\Om) \hookrightarrow L^q_{loc}(\Om) \text{  compactly for  } q \in (0,1)$ as well. This completes the proof. 
\end{proof}

Indeed, the above proposition shows that the map $G_q$ is compact on $\Dp$ for $g=\chi_K$, where $\chi_K$ is the characteristic function of $K$ in $\Om.$ However, in the following lemma, we prove the compactness of $G_q$ for $g$ in a more general class of weight functions.

\begin{lemma} \label{cpctlemma}
Let $p \in (1, \infty)$. For $i=1,2,$ let $g_i \in \F_{X_i}:=\overline{\cc(\Om_i)}^{X_i}$ and \eqref{normineq} holds. Then the map $$G_q(u) = \int_{\Om} |g||u|^q, \quad \forall \, u \in \Dp,$$ is compact on $\Dp$ for $q \in (0, \de)$, where $\de=p^*$ (if $N>p$), and $\de=\infty$ (if $N \le p).$ 
\end{lemma}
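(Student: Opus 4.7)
The approach is to show that any bounded sequence $\{u_n\}\subset\Dp$ admits a subsequence along which $G_q(u_n)$ converges. Since $\Dp$ is reflexive (for $p\in(1,\infty)$, as a closed subspace of $L^p(\Om)^N$ via the gradient), I pass to a subsequence so that $u_n \wra u$ weakly in $\Dp$; by the compact embedding $\Dp \hookrightarrow L^q_{loc}(\Om)$ of Proposition \ref{locally embedd}, I further extract to obtain $u_n \to u$ strongly in $L^q_{loc}(\Om)$ and almost everywhere in $\Om$. It then suffices to prove $G_q(u_n) \to G_q(u)$.

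To exploit the product structure $g=g_1 g_2$, I approximate by compactly supported smooth pieces: pick $\phi_i^{(k)}\in\cc(\Om_i)$ with $\phi_i^{(k)}\to g_i$ in $X_i$ as $k\to\infty$ (possible since $g_i \in \F_{X_i}$) and set $g^{(k)}(y,z):=\phi_1^{(k)}(y)\phi_2^{(k)}(z)$, which is continuous with compact support in some $K_k\cset\Om$. The telescoping identity $g - g^{(k)} = g_1(g_2 - \phi_2^{(k)}) + (g_1 - \phi_1^{(k)})\phi_2^{(k)}$ together with the triangle inequality yields
\begin{align*}
    \abs{G_q(u_n) - G_q(u)} \le \int_{\Om}\abs{g - g^{(k)}}\abs{u_n}^q + \int_{\Om} \abs{g^{(k)}} \bigl|\,\abs{u_n}^q - \abs{u}^q\,\bigr| + \int_{\Om}\abs{g - g^{(k)}}\abs{u}^q.
\end{align*}

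For the two tail terms I apply the hypothesis \eqref{normineq} to each summand of the telescoping identity---interpreted as a bilinear bound valid for any pair in $X_1\times X_2$---to obtain, for every $v \in \c1(\Om)$,
\begin{align*}
    \int_{\Om}\abs{g - g^{(k)}}\abs{v}^q \le C\Big(\norm{g_1}_{X_1}\norm{g_2-\phi_2^{(k)}}_{X_2} + \norm{g_1-\phi_1^{(k)}}_{X_1}\norm{\phi_2^{(k)}}_{X_2}\Big)\Big(\int_\Om |\Gr v|^p\Big)^{\frac{q}{p}},
\end{align*}
extended by density to $v=u_n,u\in\Dp$. Boundedness of $\{u_n\}$ in $\Dp$, boundedness of $\{\phi_i^{(k)}\}$ in $X_i$, and $\norm{\phi_i^{(k)}-g_i}_{X_i}\to 0$ together make these tail terms uniformly small in $n$ as $k\to\infty$. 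For the middle (local) term, I use $\int_{\Om}\abs{g^{(k)}}\bigl|\abs{u_n}^q-\abs{u}^q\bigr| \le \norm{g^{(k)}}_{\infty}\int_{K_k}\bigl|\abs{u_n}^q-\abs{u}^q\bigr|$ together with the elementary inequalities $\bigl|\abs{a}^q-\abs{b}^q\bigr|\le\abs{a-b}^q$ for $q\in(0,1]$, and $\bigl|\abs{a}^q-\abs{b}^q\bigr|\le q(\abs{a}^{q-1}+\abs{b}^{q-1})\abs{a-b}$ for $q\ge 1$ (combined with H\"older and the uniform $L^q(K_k)$-bound on $\{u_n\}$ in the second case), along with strong $L^q(K_k)$-convergence $u_n\to u$, to conclude that for each fixed $k$ this term tends to $0$ as $n\to\infty$.

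An $\varepsilon/3$ argument---fix $k$ large so the first and third terms are $<\varepsilon/3$ uniformly in $n$, then let $n\to\infty$ so the middle term is $<\varepsilon/3$---gives $G_q(u_n)\to G_q(u)$, establishing compactness of $G_q$ on $\Dp$. The main obstacle is the justification of the first step above: one must treat \eqref{normineq} as a genuine bilinear estimate that holds for \emph{every} pair in $X_1\times X_2$ (not just the specific $(g_1,g_2)$ in the hypothesis), so that replacements like $g_i\mapsto g_i-\phi_i^{(k)}$ remain admissible. This is exactly the form in which the inequality was derived throughout Theorems \ref{Symmetrization}--\ref{weighted Lebesgue product}, so the interpretation is warranted.
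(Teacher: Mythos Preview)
Your proof is correct and follows essentially the same approach as the paper: approximate each $g_i$ by compactly supported smooth functions, split the defect into ``tail'' pieces controlled by the bilinear inequality \eqref{normineq} and a ``local'' piece on a compact set handled via Proposition \ref{locally embedd}, then conclude by an $\varepsilon/3$ argument. The paper organizes the splitting slightly differently---it works directly with $\int_{\Om}|g|\bigl|\,|u_n|^q-|u|^q\,\bigr|$ and telescopes $g_1g_2=(g_1-g_{\ep,1})g_2+g_{\ep,1}(g_2-g_{\ep,2})+g_{\ep,1}g_{\ep,2}$---but the mechanism is identical, and your version in fact supplies more detail on the local term (the elementary inequalities for $\bigl|\,|a|^q-|b|^q\,\bigr|$) than the paper does.
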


\begin{proof}
Let $u_n \rightharpoonup u$ in $\Dp$ and let $\ep >0$ be given. Set 
$M= \sup\{ \norm{|\Gr u_n|}_{p}^q + \norm{|\Gr u|}_{p}^q \}$. For $g_i \in \overline{\cc(\Om_i)}^{X_i}$ ($i=1,2$), we split $g_i = g_{\ep,i} + (g_i - g_{\ep,i})$ where $g_{\ep,i} \in \cc(\Om_i)$ such that $\norm{g_i - g_{\ep,i}}_{X_i} < \frac{\ep}{M}$. Then we write
\begin{align}\label{compact-1}
 \no   \dis |g_1| |g_2| \abs{(\abs{u_n}^q - \abs{u}^q)}  \le \dis |g_1 - g_{\ep,1}| |g_2| & \abs{(\abs{u_n}^q - \abs{u}^q)} +  \dis |g_{\ep,1}| |g_2-g_{\ep,2}| \abs{(\abs{u_n}^q - \abs{u}^q)} \\
    & +  \dis |g_{\ep,1}| |g_{\ep,2}| \abs{(\abs{u_n}^q - \abs{u}^q)}.
\end{align}
We estimate the first two integrals in the right hand side of \eqref{compact-1}, using \eqref{normineq} as 
\begin{align}\label{compact-2}
 \no   \dis ( & |g_1 - g_{\ep,1}|  |g_2| + |g_{\ep,1}| |g_2-g_{\ep,2}| ) \abs{(\abs{u_n}^q - \abs{u}^q)} \\ 
 & \le C \left(  \norm{g_1 - g_{\ep,1}}_{X_1} \norm{g_2}_{X_2} + \norm{g_{\ep,1}}_{X_1} \norm{g_2-g_{\ep,2}}_{X_2} \right) \left( \norm{|\Gr u_n|}_{p}^q + \norm{|\Gr u|}_{p}^q \right).
\end{align}
Further, using Proposition \ref{locally embedd}, there exists $n_1 \in \N$ such that 
$$ \dis|g_{\ep,1}| |g_{\ep,2}| \abs{(\abs{u_n}^q - \abs{u}^q)} = \int_{K_1 \times K_2} |g_{\ep,1}| |g_{\ep,2}| \abs{(\abs{u_n}^q - \abs{u}^q)} < \ep, \quad  \forall  \, n \ge n_1,$$
where $K_i \subset \Om_i$ is the compact support of $g_{\ep,i}$. Therefore, from \eqref{compact-1} and \eqref{compact-2}, $$ \dis |g|\abs{(\abs{u_n}^q - \abs{u}^q)} =  \dis |g_1| |g_2| \abs{(\abs{u_n}^q - \abs{u}^q)} < C\ep, \quad  \forall  \, n \ge n_1. $$ Thus, $G_q(u_n) \rightarrow G_q(u)$ as $n \rightarrow \infty$. 
\end{proof}

\noi {\bf{Proof of Theorem \ref{cpct1}:}} The compactness of $G_q$ follows from Lemma \ref{cpctlemma}. Now for $q>1$, we show the existence of non-negative solution to the problem \eqref{eqn:evp}.  Recall that 
\begin{align}\label{exists-1}
    \frac{1}{B_q(g)} = \inf\left\{ J(u)  : u \in N_g \right\} = \inf\left\{ R(u)  : u \in \Dp \setminus \{0\} \right\},
\end{align}
where $J(u) = \int_{\Om} |\Gr u|^p, N_g = \{ u \in \Dp :  G_q(u) = 1 \},$ and $R(u) = (\int_{\Om} g |u|^q )^{-\frac{p}{q}} \int_{\Om} |\Gr u|^p$. Let $(u_n)$ be a minimizing sequence for $J$ on the set $N_g$. By the coercivity of $J$, the sequence $(u_n)$ is bounded in $\Dp$ and hence admits a subsequence $(u_{n_k})$ such that $u_{n_k} \rightharpoonup u_1$ in $\Dp.$ Now using the compactness of $G_q$, we have $u_1 \in N_g.$ Further, the weak lowersemicontinuity of the norm $\norm{\Gr (\cdot)}_p$ gives
\begin{align*}
    \frac{1}{B_q(g)} = \lowlim_{k \rightarrow \infty} \dis \abs{\Gr u_{n_k}}^p \ge \dis \abs{\Gr u_1}^p \ge \frac{1}{B_q(g)}.
\end{align*}
Therefore, $\frac{1}{B_q(g)}$ is attained and $J$ admits a minimizer $u_1$ over $N_g$. Moreover, from \eqref{exists-1}, $u_1$ also minimizes $R$ over $\Dp \setminus \{0\}$, and  hence $\left<R'(u_1), v  \right>=0$ for $v \in \Dp$. Therefore, as $u_1 \in N_g$ and $q>1$,  we obtain 
\begin{align}\label{weak formulation}
     \dis |\Gr u_1|^{p-2} \Gr u_1 \cdot \Gr v = \frac{1}{B_q(g)} \dis  g \abs{u_1}^{q-2}u_1 v, \quad \forall  \, v \in \Dp.
\end{align} 
Since $|u_1| \in \Dp$ and $R(|u_1|)=R(u_1)$, we easily see that   $|u_1|$ is a non-negative solution of \eqref{eqn:evp}. 
\qed  

\begin{remark}
Let $N>p$. For $v \in \cc(\Om)$ with $v \ge 0$, we have
\begin{align*}
    \dis |\Gr (|u_1|)|^{p-2} \Gr (|u_1|) \cdot \Gr v = \frac{1}{B_q(g)} \dis  g \abs{u_1}^{q-1} v \ge 0.
\end{align*}
Then for $q \in [p,p^*)$, $|u_1| \in \Dp$ satisfies all the assumptions of Proposition \cite[Proposition 3.2]{KLP}. Therefore, by the strong maximum principle, $|u_1| >0$ a.e. in $\Om$. 
\end{remark}

The following proposition shows that $G_q$ is not compact for the cylindrical weight $g(x) = |y|^{-\frac{N}{\al(p,q)}}.$ 

\begin{proposition}\label{noncmpct}
Let $q \in [p,p^*]$ and $0 \in \overline{\Om}$. Then the map $G_q$ is not compact for $g(x)=|y|^{-\frac{N}{\al(p,q)}}$.
\end{proposition}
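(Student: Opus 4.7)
The strategy is to produce a bounded sequence in $\Dp$ converging weakly to zero along which $G_q$ stays bounded below by a positive constant; since a compact $G_q$ would be weakly-to-strongly continuous, this rules out compactness.

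The construction is driven by the fact that both the Dirichlet $p$-energy and the weighted functional $\int |y|^{-s}|u|^q$ are invariant under the scaling $T_\lambda u(x) := \lambda^{(N-p)/p} u(\lambda x)$ for $\lambda>0$: the usual change of variables $w = \lambda x$ produces, in the weighted integral, an overall factor of $\lambda^{(N-p)q/p + s - N}$, and this exponent vanishes precisely when $s = N/\al(p,q) = N - (N-p)q/p$. To deploy this in $\Om$, using $0 \in \overline{\Om}$, I would choose a sequence $x_n \in \Om$ with $x_n \to 0$ together with radii $r_n > 0$ satisfying $\overline{B_{r_n}(x_n)} \subset \Om$, $r_n \to 0$, and such that the ratio $|y_n|/r_n$ stays bounded by some constant $M$, where $y_n \in \R^k$ denotes the first $k$ components of $x_n$. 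For a fixed nontrivial $\varphi \in \c1(B_1(0))$, I would then define
\begin{align*}
u_n(x) := r_n^{-(N-p)/p}\, \varphi\!\left(\tfrac{x - x_n}{r_n}\right) \in \c1(\Om).
\end{align*}
Repeating the change-of-variables computation from the scaling argument with $w = (x-x_n)/r_n$ yields $\|\nabla u_n\|_p = \|\nabla \varphi\|_p$ for every $n$, while
\begin{align*}
G_q(u_n) = \int_{B_1(0)} \left|\tfrac{y_n}{r_n} + w^{(1)}\right|^{-s} |\varphi(w)|^q \, dw \ \ge \ (1+M)^{-s} \int_{B_1(0)} |\varphi(w)|^q \, dw \ =: \ c > 0,
\end{align*}
where $w^{(1)}$ denotes the first $k$ components of $w$ and we used $|w^{(1)}| \le |w| \le 1$.

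The weak convergence $u_n \rightharpoonup 0$ in $\Dp$ follows from a standard argument: the sequence is bounded, so every subsequence has a further weakly convergent one, and by the compact embedding $\Dp \hookrightarrow L^p_{loc}(\Om)$ from Proposition \ref{locally embedd}, the weak limit must agree with the pointwise almost-everywhere limit, which is $0$ since the supports $\overline{B_{r_n}(x_n)}$ shrink to $\{0\}$; a subsequence-of-subsequence argument then gives weak convergence of the full sequence. Since $G_q(u_n) \ge c > 0 = G_q(0)$, the functional $G_q$ is not weakly sequentially continuous on $\Dp$ and hence not compact. The main obstacle is producing the sequence $(x_n,r_n)$ with $|y_n|/r_n$ bounded: if $0 \in \Om$ one simply takes $x_n = 0$, which forces $y_n = 0$ and trivializes the bound; in the boundary case $0 \in \partial\Om$ the construction must exploit the local geometry of $\Om$ near the origin (in particular, the cone-like structure of the sectorial sets $\Om_{a,b,S}\times \R^{N-k}$ with $a=0$ appearing earlier in the paper) so that a ball of radius comparable to $|y_n|$ fits inside $\Om$ around $x_n$.
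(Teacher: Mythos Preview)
Your scaling construction is correct and complete for the case $0\in\Om$, and your discussion of the boundary case correctly isolates the geometric input that is needed. However, the paper proceeds quite differently, so a brief comparison is in order.

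The paper argues by reduction to the non-cylindrical weight $|x|^{-s}$ with $s=\frac{N}{\al(p,q)}$: non-compactness of $G_q$ for $|x|^{-s}$ is quoted from Maz'ja, and then one shows that compactness for $|y|^{-s}$ would force compactness for $|x|^{-s}$. Concretely, if $G_q$ were compact for $|y|^{-s}$ and $u_n\rightharpoonup u$, then $\int |y|^{-s}|u_n|^q\to\int |y|^{-s}|u|^q$; the Br\'ezis--Lieb lemma upgrades this to $\int |y|^{-s}|u_n-u|^q\to 0$, and the pointwise bound $|x|^{-s}\le |y|^{-s}$ yields $\int |x|^{-s}|u_n-u|^q\to 0$, a contradiction.

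What each approach buys: your direct construction is self-contained and elementary---it needs neither Br\'ezis--Lieb nor the external reference---and it makes the mechanism of failure (scale invariance at $s=\frac{N}{\al(p,q)}$) completely transparent. The paper's route is shorter and avoids any case analysis on the location of $0$: the interior/boundary distinction you flagged is entirely absorbed into the quoted result for $|x|^{-s}$, and the passage from $|x|^{-s}$ to $|y|^{-s}$ is a one-line comparison. In particular, your boundary-case discussion (choosing $x_n$, $r_n$ with $|y_n|/r_n$ bounded) is exactly the geometric issue that the paper delegates to the Maz'ja reference rather than working out explicitly.
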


\begin{proof}
Let $q \in [p,p^*]$, $s=\frac{N}{\al(p,q)}$ and $0 \in \overline{\Om}$. Then using the Maz'ja's criteria \cite[Section 2.4.2, Theorem 1, page 130]{Mazja}, one can verify that the map $G_q$ is not compact for $|x|^{-s}$. Now suppose,  $G_q$ is compact for $|y|^{-s}$. Then for a sequence $u_n \wra u$ in $\Dp$, $\int_{\Om} \frac{|u_n|^q}{|y|^s} \ra \int_{\Om} \frac{|u|^q}{|y|^s} $, as $n \ra \infty.$ By Br\'ezis-Lieb lemma \cite{BL},
$$ \lim_{n \ra \infty} \left|\int_{\Om} \frac{|u_n|^q}{|y|^s} - \int_{\Om} \frac{|u|^q}{|y|^s} - \int_{\Om}  \frac{|u_n -u|^q}{|y|^s}  \right| = 0.$$ 
Therefore, $\int_{\Om} \frac{|u_n -u|^q}{|y|^s} \ra 0 $, as $n \ra \infty.$ Further, using $|x|^{-s} \leq |y|^{-s}$, we get $\int_{\Om} \frac{|u_n -u|^q}{|x|^s} \ra 0 $ as $n \ra \infty,$ a contradiction. Therefore, $G_q$ is not compact for $|y|^{-s}$.
\end{proof}

\begin{remark}
For $q \in [p,p^*]$ and $g(x) = |y|^{- \frac{N}{\al(p,q)}}$, using the concentration compactness principles, the existence of the solutions of \eqref{eqn:evp} is obtained on $\Om =\R^N$ \cite[Remark 2.6]{BT}.
\end{remark}
\section{Examples and concluding remarks} In this section, we provide examples to show that the functions spaces given by Theorem \ref{Symmetrization}, Theorem \ref{weighted Lebesgue} and Theorem \ref{weighted Lebesgue product} are mutually independent. We also prove the necessary conditions.  

\begin{example} $(i)$ The spaces $L^{\al(p,q),r}(\R^N)$ and $L^1((0,\infty),r^{\frac{N}{\al(p,q)}-1})$ are not comparable. For $N>p$ and $q \in [p,p^*)$, consider the following functions on $\RN$: \begin{align*}
    g_1(x) = |x|^{-\frac{N}{\al(p,q)}} \quad \text{and} \quad g_2(x) = (|x|+1)^{-\left( \frac{N}{\al(p,q)}+1 \right)}.
\end{align*}
Then $g_1 \in L^{\al(p,q), \infty}(\RN)$ and $g_2 \notin L^{\al(p,q), \infty}(\RN)$.  Further, $\g_1 \notin L^1((0, \infty), r^{\frac{N}{\al(p,q)}-1})$ and  $\g_2 \in L^1((0, \infty), r^{\frac{N}{\al(p,q)}-1})$. 

\noi $(ii)$ The function spaces provided by Theorem \ref{weighted Lebesgue} and Theorem \ref{weighted Lebesgue product} are independent. For instance, Theorem \ref{weighted Lebesgue product}  provides weight functions on the domain $\Om=\left(\R^2\setminus B_1(0)\right)\times \R$, for which Theorem \ref{weighted Lebesgue} is not applicable. On the other hand, consider the following function:
\begin{align*}
    g(x)= \left\{ \begin{array}{cc}
        |x|^{-\frac{1}{2}}, & |x| \leq 1;  \\
        0, & \mbox{otherwise}.
    \end{array} \right.
\end{align*}
Since $\g \in L^1((0, \infty))$, we can apply Theorem \ref{weighted Lebesgue} to show $g \in \mathcal{H}_{p, P^*(1)}(\RN)$. Although, Theorem \ref{weighted Lebesgue product} (for $q=P^*(1)$) is not applicable as $\g \notin L^{\infty}((0, \infty)).$ 
\end{example}

Next we see that not all products of $g_1, g_2$ give rise to $(p,q)$-Hardy potentials.
\begin{example}\label{non-compatible}
For $q \in (0, P^*(k))$, we consider $g_1(y)=|y|^{-\frac{N}{\al(p,q)}}, y \in \Rk$. Hence $g_1^*(t) = \left( \frac{\om_k}{t} \right)^{\frac{N}{\al(p,q)k}}$ and $|g_1|_{\frac{\al(p,q)k}{N},\infty} = \om_k^{\frac{N}{\al(p,q)k}}$. Therefore, $g_1 \in L^{\frac{\al(p,q)k}{N},\infty}(\R^k)$. Since $g_1$ is not locally integrable on $\Rk$,  $g(x)=g_1(y) g_2(z) \notin \mathcal{H}_{p,q}(\R^N)$ for $q \in (0, P^*(k))$ and for any non-zero $g_2\in L^1_{loc}(\R^{N-k}).$ \end{example}

In the following remark, we describe the connection between Fefferman-Phong type conditions and Theorem \ref{Symmetrization}.

\begin{remark} \label{connection}
\noi $(i)$ For $N > p$ and $q\in [p,p^*]$, every weight function in $L^{\al(p,q), \infty}(\RN)$ satisfies \eqref{SW1} (by Remark \ref{equivalent}) and hence $L^{\al(p,q), \infty}(\RN) \subset \pqR$. 
This gives an alternate proof for Theorem \ref{Symmetrization} -$(i)$ without involving the embedding of $\mathcal{D}^{1,p}_0(\R^N)$.

\noi $(ii)$ We claim that for $N>p$ and $q>p^*,$ if $g \in L^s_{loc}(\R^N)$ satisfies \eqref{SW1}, then $g \equiv 0$. We choose $x_0 \in \R^N$ and consider a sequence $(Q_n)$ of cubes centred at $x_0$ and $|Q_n| \ra 0$ as $n \ra \infty$. Since $g$ satisfies \eqref{SW1},
 \begin{align} \label{SW2}
     \lim_{n \ra \infty} |Q_n|^{\frac{s}{\al(p,q)}} \left(\frac{1}{|Q_n|}\int_{Q_n} |g(x)|^s \, \dx  \right) \leq c_1,
 \end{align}
where $c_1=C_1(p,q)>0$. By the Lebesgue-Besicovitch differentiation theorem, 
 $$\lim_{n \ra \infty} \frac{1}{|Q_n|} \int_{Q_n} |g(x)|^s \, \dx  = |g(x_0)|^s.$$ Moreover, since $\al(p,q)<0$, we have $|Q_n|^{\frac{s}{\al(p,q)}} \ra \infty$, as $n \ra \infty$. Thus, \eqref{SW2} ensures that $g(x_0)=0.$ Since $x_0 \in \R^N$ is arbitrary, we get $g = 0$ a.e. in $\RN$.

\noi $(iii)$ Let $N \le p \leq q$, $\Om$ be bounded and $g \in L_{loc}^1(\Om)$. Let $g_{ext}$ be the zero extension of $g$ outside $\Om$, such that $g_{ext}$ satisfies \eqref{SW1}. Choose $Q_0 \supseteq \Om$. Then
 \begin{align*}
  |Q_0|^{\frac{s}{\al(p,q)} -1} \int_{Q_0} |g_{ext}(x)|^s \, \dx \leq    \sup_{ \{ Q \subset \RN, |Q| < \infty \}} |Q|^{\frac{s}{\al(p,q)} -1} \int_{Q} |g_{ext}(x)|^s \, \dx   \le c_1. 
 \end{align*}
 Consequently, $g \in L^s(\Om)$. We would like to point out that $L^s(\Om) \subseteq L^{1,\infty,\frac{q}{p'}}(\Om) \subseteq L^1(\Om)$ (by $(ii)$ and $(iii)$ of Proposition \ref{LZ prop}). 
Thus, for a bounded domain, Theorem \ref{Symmetrization} -$(ii),(iii)$ gives a bigger class of weight functions than the Sawyer's condition \eqref{SW1}.
\end{remark}

\begin{remark}
The weight functions provided in this article do not exhaust  the entire $\pq.$  For example, consider the weight functions of the form $$g(x)=g_1(y)g_2(z)g_3(w),\;x=(y,z,w) \in \R^k\times \R^l \times \R^{N-k-l},$$ where $1\le k,l \le \N; k+l \le N.$ In general, corresponding to  $s_i\in \{1,2,...N\}$ with $1\le i\le m\le N$ and $\sum_{i=1}^m s_i=N,$  consider the weight functions of the form $$g(x)=\prod_{i=1}^mg_i(y_i),\; x=(y_1,y_2\ldots y_m)\in \prod_{i=1}^m \R^{s_i}.$$ One can provide conditions on $g_1,g_2\ldots,g_m$ so that $g$ is in $\pq.$
\end{remark}

\subsection{The necessary conditions}
In the following proposition we show under certain conditions on $g$ that the spaces mentioned in Proposition \ref{HSLo} and Proposition \ref{HSLo-1} are necessary for $g$ to satisfy \eqref{p-qHardy}. A similar result for the Hardy-Rellich inequalities is obtained in \cite{AUA}. 

\begin{proposition} \label{necessarycond}
Let $\Om =B_R(0)$ with
\begin{align*}
R \in \left\{\begin{array}{ll}
    (0,\infty], \quad &\text{if} \ \ N>p; \\
    (0,\infty), \quad &\text{if} \ \ N=p.
 \end{array} \right.
\quad  \mbox{and} \quad
q \in \left\{\begin{array}{ll}
    [p,p^*], \quad &\text{if} \ \ N>p; \\
    (p,\infty), \quad &\text{if} \ \ N=p.
 \end{array} \right.
\end{align*}
If $g \in \pq$ is radial, radially decreasing, then
\begin{align*}
g \in X:= \left\{\begin{array}{ll}
    L^{\al(p,q),\infty}(\Om), \quad &\text{if} \ \ N>p; \\
    L^{1,\infty,\frac{q}{N'}}(\Om), \quad &\text{if} \ \ N=p.
 \end{array} \right.\end{align*}
\end{proposition}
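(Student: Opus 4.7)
The strategy is to test the hypothesised inequality \eqref{p-qHardy} against a suitable one-parameter family of radial cutoff functions $u_r\in\c1(\Om)$ supported in $B_{2r}(0)$, use this to extract a pointwise ceiling on the radial profile of $g$, and then convert the ceiling into a bound on $g^{*}$ via the formula $g^{*}(t)=g(\omega_N^{-1/N}t^{1/N})$ from Remark \ref{rearrangement for radial}, valid for radial radially decreasing weights.

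\textbf{The case $N>p$.} Fix a smooth nonincreasing $\phi:[0,\infty)\to[0,1]$ with $\phi\equiv 1$ on $[0,1]$ and $\phi\equiv 0$ on $[2,\infty)$, and for $r\in(0,R/2)$ set $u_r(x):=\phi(|x|/r)\in\c1(\Om)$. A direct computation gives $\int_\Om|\Gr u_r|^p\,\dx\le C r^{N-p}$, while radial monotonicity of $g$ together with $u_r\equiv 1$ on $B_r(0)$ yields $\int_\Om g|u_r|^q\,\dx\ge \omega_N r^N g(r)$. Plugging these into \eqref{p-qHardy} and using the algebraic identity $q(N-p)/p-N=-N/\al(p,q)$ gives $g(r)\le C r^{-N/\al(p,q)}$ for every $r\in(0,R/2]$. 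Feeding this into the rearrangement formula produces $t^{1/\al(p,q)}g^{*}(t)\le C$ on $(0,\omega_N(R/2)^N]$. For $t$ in the residual interval $(\omega_N(R/2)^N,\omega_N R^N)$ (only present when $R<\infty$), radial monotonicity forces $g^{*}(t)\le g(R/2)<\infty$ and $t^{1/\al(p,q)}$ is bounded, so the bound extends, giving $g\in L^{\al(p,q),\infty}(\Om)$.

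\textbf{The case $N=p$.} A power-type cutoff no longer captures the correct weight, so a Moser--Trudinger logarithm is required. For $r\in(0,R/2)$ consider $v_r(x):=\min\{\log(R/|x|),\log(R/r)\}$ (smoothed by a mollifier if strict $\c1$ regularity is needed). Then $|\Gr v_r|=|x|^{-1}$ on the annulus $r<|x|<R$ and vanishes elsewhere, so $\int_\Om|\Gr v_r|^N\,\dx=N\omega_N\log(R/r)$. Since $v_r=\log(R/r)$ on $B_r(0)$ and $g$ is radially decreasing, $\int_\Om g|v_r|^q\,\dx\ge \omega_N r^N g(r)(\log(R/r))^q$. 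Inserting these bounds into \eqref{p-qHardy} and simplifying via $q-q/N=q/N'$ yields $r^N g(r)(\log(R/r))^{q/N'}\le C$ for all small $r$. Setting $t=\omega_N r^N$ and noting that $\log(eR^N/(\omega_N r^N))\sim N\log(R/r)$ as $r\to 0^+$, we conclude $t\,(l_1(t))^{q/N'}g^{*}(t)\le C$ on $(0,\omega_N(R/2)^N]$. The residual interval is again handled by the finite bound $g^{*}(t)\le g(R/2)$ together with $l_1(t)$ being bounded there, so $g\in L^{1,\infty;q/N'}(\Om)$.

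\textbf{Main obstacle.} The only nontrivial choice is the test function in the critical case $N=p$: a naive power cutoff only produces a polynomial ceiling on $g(r)$ and loses the logarithmic factor, so one must resort to the near-extremisers of the Moser--Trudinger embedding to recover the exact Lorentz--Zygmund weight $(l_1(t))^{q/N'}$. The remaining bookkeeping --- the exponent identities linking $q,N,p$ to $\al(p,q)$ and $q/N'$, and the automatic boundedness of $g$ on the annulus $R/2\le|x|<R$ supplied by radial monotonicity --- is routine.
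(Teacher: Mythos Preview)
Your proof is correct and follows essentially the same strategy as the paper's: test the inequality against radially symmetric functions concentrated near the origin --- a compactly supported bump for $N>p$ and the truncated Newtonian logarithm $\min\{\log(R/|x|),\log(R/r)\}$ for $N=p$ --- and read off the decay of $g$. The one structural difference is in how the bound is extracted: you derive a \emph{pointwise} ceiling $g(r)\le C r^{-N/\alpha(p,q)}$ (resp.\ $r^N(\log(R/r))^{q/N'}g(r)\le C$) and translate it into a bound on $g^{*}$, whereas the paper lower-bounds $\int_\Om g|u_r|^q$ by $(r/2)^q\int_0^{\omega_N(r/2)^N}g^{*}$ (resp.\ $(\log(R/r))^q\int_0^{\omega_N r^N}g^{*}$) to obtain a bound on $g^{**}$ directly. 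Both routes yield membership in the stated space; the paper's $g^{**}$ bound is marginally stronger but not needed for the conclusion. One small caveat: the exact identity $g^{*}(t)=g(\omega_N^{-1/N}t^{1/N})$ from Remark~\ref{rearrangement for radial} assumes \emph{strict} decrease, so to be safe replace your appeal to it by the monotonicity of rearrangement applied to the pointwise inequality $g(|x|)\le C|x|^{-N/\alpha(p,q)}$ together with Example~\ref{ex rearrangement}.
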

  
\begin{proof}
\underline{$\bm{N>p}$}: Let $ g \in \pq$ be a radial and radially decreasing. For each $r\in (0,R),$ consider the following function:
 \begin{align*}
           u_r(x)= \left\{\begin{array}{ll}
    r-|x|, \quad &\text{for} \ \ |x|\le r; \\
    0, \quad &\text{for} \ \ |x| \ge r.
 \end{array} \right. \end{align*}
Clearly,
  \begin{align}
          \Gr u_r(x)= \left\{\begin{array}{ll}
    -\frac{x}{|x|}, \quad &\text{for} \ \ |x|\le r; \\
    0, \quad &\text{for} \ \ |x| \ge r,
 \end{array} \right. \quad \text{and} \quad \dis |\Gr u_r(x)|^p \, \dx = \int_{B_r} \dx = \om_N r^N. \label{necessary2} \end{align}
Thus for each $r\in (0,R),$ $u_r\in \Dp.$ Furthermore, since $g \in \pq$, 
 \begin{equation}\label{necessary3}
  \int_\Om g(x)|u_r(x)|^q \, \dx \leq C \left(\int_\Om |\nabla u_r(x)|^p \, \dx \right)^{\frac{q}{p}},\; \forall r\in(0,R).
 \end{equation}
 Since $g$ is  radial and radially decreasing, we estimate the left hand side of the above inequality as below:        
  \begin{align*}
   	\int_{\Om} g(x) |u_r(x)|^q \, \dx \geq \int_{B_{{\frac{r}{2}}}}g(|x|) |u_r(x)|^q \, \dx & \geq  \left(r-{\frac{r}{2}}\right)^q\int_{B_{{\frac{r}{2}}}}g(|x|)\, \dx  \\
   	   & =\left({\frac{r}{2}}\right)^q \int_0^{\om_N ({\frac{r}{2}})^N} g^* (s)\, \ds.  
  	\end{align*}  
Therefore,  from \eqref{necessary2} and \eqref{necessary3} we obtain
    $$\left({\frac{r}{2}}\right)^q \int_0^{\om_N ({\frac{r}{2}})^N} g^*(s) \, \ds \le C  r^{\frac{Nq}{p}}.$$ 
Now by setting $\om_N ({\frac{r}{2}})^N=t $ and since $0<r<R$ is arbitrary, we conclude that $$ \sup_{t \in (0,\frac{|\Om|}{2^N})} t^{{\frac{1}{\al(p,q)}}} g^{**}(t) \leq C.$$ 
Moreover, $ t^{{\frac{1}{\al(p,q)}}} g^{**}(t) $ is bounded on $ (\frac{|\Om|}{2^N}, |\Om|) $. Therefore,   $g$ must belong to $L^{\al(p,q),\infty}(\Om)$. 
  
\noi \underline{$\bm{N=p}$}:  Let $R<\infty$ and $q \in [N,\infty)$. For   each $r\in (0,R),$ we consider the following function:
\begin{align*}
 u_r(x)= \left\{\begin{array}{ll}
    \log \left(\frac{R}{r} \right), \quad &\text{for} \ \ |x|\le r; \\
    \log \left(\frac{R}{|x|} \right), \quad &\text{for} \ \ |x| \ge r.
 \end{array} \right.
\end{align*}
Clearly,
\begin{align}
           \Gr u_r(x)= \left\{\begin{array}{ll}
    0, \quad &\text{for} \ \ |x|\le r; \\
    -\frac{x}{|x|^2}, \quad &\text{for} \ \ |x| \ge r,
 \end{array} \right. \quad \text{and} \quad 
 \int_{\Om} |\nabla u_r(x)|^N \, \dx &=  \om_N \int_{r}^R \, \frac{\dt}{t} =  \om_N  \log \left(\frac{R}{r} \right).  \label{necessary4} \end{align}
Thus for each $r\in (0,R),$ $u_r\in \DN.$ Furthermore, since $g \in \pq$, 
 \begin{equation}\label{necessary5}
  \int_\Om g(x)|u_r(x)|^q \, \dx \leq C \left(\int_\Om |\nabla u_r(x)|^N \, \dx\right)^{\frac{q}{N}}, \; \forall r \in (0,R).
 \end{equation}
 Since $g$ is  radial and radially decreasing, we estimate the left hand side of the above inequality as below:        
   \begin{align*}
   	\dis g(x) |u_r(x)|^q \dx  \ge  \left(\log \left(\frac{R}{r}\right)\right)^q\int_{B_{r}}g(|x|) \, \dx \notag 
   	 = \left(\log \left( \frac{R}{r} \right) \right)^q \int_0^{\om_N r^N} g^*(s) \, \ds.   
  	\end{align*}   
Now  using \eqref{necessary4} and \eqref{necessary5} we obtain
$$ \int_0^{\om_N r^N} g^*(s)\ds \leq C \left(\log \left( \frac{R}{r} \right) \right)^{-\frac{q}{N'}} .$$ 
By setting $\om_N r^N=t $ and since $0<r<R$ is arbitrary, we conclude that $$ \sup_{t \in (0,|\Om|)} t \left( \log \left(\frac{|\Om|}{t} \right) \right)^{\frac{q}{N'}} g^{**}(t) \leq C.$$ Therefore, $g \in L^{1, \infty, \frac{q}{N'}}(\Om)$. 
\end{proof}  
\appendix
\section{}
In this section, we provide various Lorentz and Lorentz-Zygmund spaces in $\pq$. Then we supply alternative proofs for the Lorentz-Sobolev and Brezis-Wainger embeddings. First, we state a sufficient condition for the one-dimensional weighted Hardy inequalities due to Muckenhoupt in \cite[Theorem 2]{Muckenhoupt} (for $q=p$), also see, \cite[Theorem 2]{Bradley} (for $q \ge p$),  \cite{Mazja, Sinnamon} (for $0 < q <p$). For further readings on these inequalities, we refer to \cite[Chapter 5]{KMP}.
\begin{proposition}[Muckenhoupt condition]\label{Mucken}
For $b \in (0, \infty],$ let $v,w$ be non-negative measurable functions on $(0,b)$ with $w > 0$. Let $p\in (1,\infty),q\in (0,\infty)$, and $\ga = \frac{pq}{p-q}.$
\begin{enumerate}[(i)]
   \item If $0<q<1$ and 
   $$ \displaystyle A_1 = \left( \int_0^b \left( \int_0^s v(t) \, \dt \right)^{\frac{\ga}{p}} \left( \int_s^b w(t)^{1-\p} \, \dt \right)^{\frac{\ga}{\p}} v(s) \, \ds \right)^{\frac{1}{\ga}} < \infty,$$
  then
 \begin{align}\label{Mucken1}
\displaystyle \left( \int_0^b \left | \int_s^b f(t) \, \dt \right|^q v(s) \, \ds \right)^{\frac{1}{q}} \leq {(\p)}^{\frac{1}{\ga}} q^{\frac{1}{p}}   A_1 \left( \int_0^b | f(s) |^p w(s) \, \ds \right)^{\frac{1}{p}}
\end{align}
holds for any measurable function $f$ on $(0,b).$
   \item If $1 \le q <p < \infty$ and 
 $$ \displaystyle A_2 = \left( \int_0^b \left( \int_0^s v(t) \, \dt  \right)^{\frac{\ga}{q}} \left( \int_s^b w(t)^{1- \p} \, \dt \right)^{\frac{\ga}{\q}} w(s)^{1-\p} \, \ds  \right)^{\frac{1}{\ga}} < \infty,  $$
then
 \begin{align}\label{Mucken2}
\displaystyle \left( \int_0^b \left | \int_s^b f(t) \, \dt \right|^q v(s) \, \ds \right)^{\frac{1}{q}} \leq  (\p)^{\frac{1}{\q}}  q^{\frac{1}{q}} A_2 \left( \int_0^b | f(s) |^p w(s) \, \ds \right)^{\frac{1}{p}}
\end{align}
holds for any measurable function $f$ on $(0,b).$
    \item If $1 \leq p \leq q < \infty$ and 
    $$ \displaystyle A_3 = \sup_{0<t<b} \left( \int_{0}^t v(s) \,  \ds \right)^{\frac{1}{q}} \left( \int_t^b w(s)^{1 - \p} \, \ds \right)^{\frac{1}{\p}} < \infty, $$
then 
\begin{align}\label{Mucken3}
\displaystyle \left( \int_0^b \left | \int_s^b f(t) \, \dt \right|^q v(s) \, \ds \right)^{\frac{1}{q}}  \le (\p)^{\frac{1}{\p}} p^{\frac{1}{q}} A_3 \left( \int_0^b | f(s) |^p w(s) \, \ds \right)^{\frac{1}{p}}
\end{align}
holds for any measurable function $f$ on $(0,b).$
\end{enumerate}
\end{proposition}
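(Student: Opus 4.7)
The plan is to prove the three cases of Proposition \ref{Mucken} by classical weighted-Hardy arguments, treating $(iii)$ (where $1\le p\le q$) first since it is the most direct, and then handling $(ii)$ ($1\le q<p$) and $(i)$ ($0<q<1$) by combining H\"older's inequality with a layer-cake or integration-by-parts device.

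For $(iii)$, fix a non-negative measurable $f$ and set $F(s):=\int_s^b f(t)\,\dt$ and $W(t):=\int_t^b w(\tau)^{1-\p}\,{\rm d}\tau$, so that $W'=-w^{1-\p}$. The key step is the weight splitting $f(t)=(f(t)\,w(t)^{1/p}\,W(t)^{\alpha})\cdot(w(t)^{-1/p}\,W(t)^{-\alpha})$, with $\alpha$ chosen so that H\"older with exponents $(p,\p)$ yields
\begin{align*}
F(s)^p\le \left(\int_s^b f(t)^p w(t) W(t)^{\alpha p}\,\dt\right)\left(\int_s^b w(t)^{1-\p} W(t)^{-\alpha\p}\,\dt\right)^{p-1}= C_{\alpha}\,W(s)^{\beta}\int_s^b f^p w\, W^{\alpha p}\,\dt,
\end{align*}
where the second integral is computed explicitly via $W'=-w^{1-\p}$. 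Raising to the power $q/p$, multiplying by $v(s)$, integrating in $s$ and swapping orders by Fubini reduces the estimate to bounding $\int_0^t v(s)\,W(s)^{\beta q/p}\,\ds$, which is where $A_3$ enters; a careful choice of $\alpha$ recovers the constant $(\p)^{1/\p}p^{1/q}A_3$.

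For $(ii)$, a direct H\"older step is unavailable since $p>q$, so I would use the layer-cake identity
\begin{align*}
\int_0^b F(s)^q v(s)\,\ds=q\int_0^\infty r^{q-1}V(\{s: F(s)>r\})\,\dr,\qquad V(s):=\int_0^s v(\tau)\,{\rm d}\tau,
\end{align*}
and estimate $V(\{F>r\})$ using the monotonicity of $F$ together with the splitting argument from $(iii)$, but with $\gamma=pq/(p-q)$ now playing the role of the integrability exponent. Summing in $r$ produces the integral defining $A_2$. For $(i)$, since $q<1$, I would instead integrate by parts using $F'=-f$ (after a truncation to handle zeros of $F$) to obtain
\begin{align*}
\int_0^b F(s)^q v(s)\,\ds=q\int_0^b f(s)F(s)^{q-1}V(s)\,\ds,
\end{align*}
and then apply H\"older with exponents $(p,\p)$ to the right-hand side, now with a weight splitting that carries a $V$-factor rather than a $W$-factor; Fubini then reorganizes the bound into the expression for $A_1$.

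The main technical obstacle will be the bookkeeping: recovering the precise multiplicative constants $(\p)^{1/\gamma}q^{1/p}$, $(\p)^{1/\q}q^{1/q}$ and $(\p)^{1/\p}p^{1/q}$ in the three cases requires pinning down $\alpha$ and $\beta$ (and the analogous parameters in $(i)$ and $(ii)$) via the identity $W'=-w^{1-\p}$ and a careful Fubini step. In case $(i)$ an additional subtlety is that $F^{q-1}$ blows up where $F$ vanishes, so the integration by parts must be justified by truncating $f$ and passing to the limit by monotone convergence; distributing the $v$-factors correctly in that splitting is where most of the algebra concentrates.
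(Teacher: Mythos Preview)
The paper does not actually prove this proposition: it is stated as a known result, with references to Muckenhoupt \cite{Muckenhoupt} (the case $q=p$), Bradley \cite{Bradley} (the case $q\ge p$), Maz'ja \cite{Mazja} and Sinnamon \cite{Sinnamon} (the case $0<q<p$), and the monograph \cite[Chapter~5]{KMP}. So there is no in-house proof to compare against; your proposal goes well beyond what the paper does.

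That said, your sketch is broadly in line with how these inequalities are proved in the cited literature. The argument you outline for $(iii)$ is precisely the classical Muckenhoupt--Bradley proof (H\"older with the weight splitting $W^{\alpha}$, then Fubini), and the choice $\alpha=1/(pp')$ indeed recovers the constant $(\p)^{1/\p}p^{1/q}$. For $(i)$ and $(ii)$ your outlines are standard ingredients --- integration by parts for $q<1$, and a level-set or duality reduction for $1\le q<p$ --- but they remain schematic: in particular, the layer-cake route you propose for $(ii)$ is not the shortest path, and tracking the sharp constants $(\p)^{1/\q}q^{1/q}$ and $(\p)^{1/\gamma}q^{1/p}$ through those arguments is genuinely delicate (this is where Sinnamon's proof, or the treatment in \cite[Chapter~5]{KMP}, does the real work). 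If you wish to include a self-contained proof rather than a citation, following those references directly would be cleaner than reconstructing the $q<p$ cases from the pieces you list.
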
 

\begin{proposition}\label{HSLo}
Let $N > p$ and  
\begin{align*}
 X:= \left\{ \begin{array}{ll}
         L^{\al(p,q),\frac{p}{p-q}}(\Om), & \quad  \text{for} \ \ q\in (0,p);\\
            L^{\al(p,q), \infty}(\Om), & \quad  \text{for} \ \ q \in [p,p^*]. \\
             \end{array}\right. 
\end{align*}
If $g \in X$, then  there exists $C = C(N,p,q)>0$ such that 
\begin{equation}\label{HSLo2.1}
    \displaystyle \int_0^{|\Om|} g^*(t) {u^*(t)}^q \,  \dt  \leq  C \norm{g}_{X}  \left( \int_{0}^{|\Om|} t^{p - \frac{p}{N}} | {u^*}^{\prime}(t) |^p \,  \dt \right)^{\frac{q}{p}}, \quad  \forall  \, u \in \c1(\Om).
\end{equation}  
\end{proposition}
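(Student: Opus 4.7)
The plan is to reduce (\ref{HSLo2.1}) to a one-dimensional weighted Hardy-type inequality and then invoke Muckenhoupt's criterion (Proposition \ref{Mucken}) separately in the three regimes $q\in(0,1)$, $q\in[1,p)$, and $q\in[p,p^*]$. Since $u\in\c1(\Om)$, its decreasing rearrangement $u^*$ is absolutely continuous on $(0,|\Om|)$ and vanishes for large $s$, so $u^*(s)=\int_s^{|\Om|}|{u^*}'(t)|\,\dt$. Setting $f:=|{u^*}'|$, $v:=g^*$, and $w(s):=s^{p-p/N}$, the target inequality (\ref{HSLo2.1}) becomes
\[
\int_0^{|\Om|} v(s)\Big(\int_s^{|\Om|} f(t)\,\dt\Big)^q\,\ds \le C\,\norm{g}_X \Big(\int_0^{|\Om|} f(s)^p w(s)\,\ds\Big)^{q/p}.
\]
Thus it suffices to bound the appropriate Muckenhoupt constant $A_j$ by $C(N,p,q)\,\norm{g}_X^{1/q}$ and raise the resulting Muckenhoupt inequality to the $q$-th power.

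As input I would record the two building blocks obtained by explicit integration:
\[
\int_0^s v(t)\,\dt = s\,g^{**}(s), \qquad \int_s^{|\Om|}w(t)^{1-\p}\,\dt \le C\,s^{-(N-p)/(N(p-1))},
\]
the second using $w^{1-\p}(t)=t^{-p(N-1)/(N(p-1))}$ and the hypothesis $N>p$ to guarantee integrability at infinity. For $q\in[p,p^*]$, plugging these into Proposition \ref{Mucken}(iii) yields $A_3 = C\sup_{t}t^{\,1/q-(N-p)/(Np)}\,g^{**}(t)^{1/q}$, and the identity $\frac{1}{q}-\frac{N-p}{Np}=\frac{1}{q\al(p,q)}$ identifies this with $C\,\norm{g}_{\al(p,q),\infty}^{1/q}$.

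For $q\in[1,p)$, set $\ga=pq/(p-q)$. Substituting into $A_2^{\ga}$ in Proposition \ref{Mucken}(ii) yields an integrand of the form $C\,g^{**}(s)^{p/(p-q)}\,s^{E}$, where a direct computation collapses the exponent to $E=\frac{pq}{N(p-q)}=\frac{p}{(p-q)\al(p,q)}-1$; this is precisely the exponent appearing in the equivalent $g^{**}$-form of the Lorentz norm $\norm{g}_{\al(p,q),p/(p-q)}^{p/(p-q)}$ (cf.\ (\ref{equivallence norm})). Hence $A_2\le C\,\norm{g}_X^{1/q}$. For $q\in(0,1)$ the only change is that $A_1$ in Proposition \ref{Mucken}(i) carries an extra factor $v(s)=g^*(s)$ in place of $w(s)^{1-\p}$; using the pointwise bound $g^*\le g^{**}$ (valid because $g^*$ is non-increasing) and repeating the same exponent arithmetic gives $A_1\le C\,\norm{g}_X^{1/q}$.

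The main obstacle is the algebraic matching between the exponents produced by the three Muckenhoupt constants and the exponents in the $g^{**}$-representation of $\norm{g}_X$. These identities are elementary but delicate, as any sign error propagates through the entire computation; I would double-check them at the endpoints $q=p$ (where $\al(p,p)=N/p$) and $q=p^*$ (where $\al(p,q)=\infty$ and the inequality degenerates to the rearranged Sobolev inequality with $g\equiv 1$).
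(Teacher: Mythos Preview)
Your proposal is correct and follows essentially the same approach as the paper: you reduce to the one-dimensional weighted Hardy inequality with $f=|{u^*}'|$, $v=g^*$, $w(t)=t^{p-p/N}$, and then invoke the three parts of Proposition~\ref{Mucken} in the three regimes $q\in(0,1)$, $q\in[1,p)$, $q\in[p,p^*]$, matching the Muckenhoupt constants with the $g^{**}$-form of the Lorentz norm. The paper carries out exactly this computation, including the same use of $g^*\le g^{**}$ in the case $q\in(0,1)$ and the same exponent identity $\tfrac{\gamma}{q}+\tfrac{1}{N(p-1)}\bigl(\tfrac{\gamma(p-N)}{q'}-p(N-1)\bigr)=\tfrac{qp}{N(p-q)}$ in the case $q\in[1,p)$.
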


\begin{proof}
In Proposition \ref{Mucken} we set $f = {u^*}^{\prime}, v = g^*$ and $w(t)= t^{p - \frac{p}{N}}.$ Then we calculate 
$$\displaystyle \int_{0}^s v(t) \, \dt = s \left( g^{**}(s) \right) \; \text{and} \; \displaystyle \int_s^{|\Om|} {w(t)}^{1 - \p} \, \dt \le \frac{N(p-1)}{N-p} s^{\frac{p-N}{N(p-1)}}.$$  Let $C_1(N,p) = \frac{N(p-1)}{N-p}.$ We consider three cases.

\noi $\bm{\underline{q \in (0,1)}}$: In this case,
\begin{align*}
    A_1^{\ga} &= \int_0^{|\Om|} \left( \int_0^s v(t) \, \dt \right)^{\frac{\ga}{p}} \left( \int_s^{|\Om|} w(t)^{1-\p} \, \dt \right)^{\frac{\ga}{\p}} v(s) \, \ds \\
    & \le  C_1^{\frac{\ga}{\p}} \int_0^{|\Om|}   (s g^{**}(s))^{\frac{\ga}{p}} s^{\frac{\ga(p-N)}{Np}} g^*(s) \, \ds \le  C_1^{\frac{\ga}{\p}} \int_0^{|\Om|} s^{\frac{\ga}{N}} (g^{**}(s))^{1+\frac{\ga}{p}} \, \ds,
\end{align*}
where $\ga=\frac{qp}{p-q}$. Therefore, 
\begin{align*}
    A_1 \le C_1^{\frac{1}{\p}} \left( \int_0^{|\Om|} s^{\frac{qp}{N(p-q)}} (g^{**}(s))^{\frac{p}{p-q}} \, \ds  \right)^{\frac{p-q}{qp}} = C_1^{\frac{1}{\p}}\norm{g}_{\al(p,q), \frac{p}{p-q}}^{\frac{1}{q}}.
\end{align*}
Thus for $g\in L^{\al(p,q),\frac{p}{p-q}}(\Om)$, the Muckenhoupt condition ($(i)$ of Proposition \ref{Mucken}) is satisfied. Therefore, using  \eqref{Mucken1} we obtain 
$$ \displaystyle  \int_0^{|\Om|} g^*(t) {u^*(t)}^{q} \, \dt   \le C \|g \|_{\al(p,q),  \frac{p}{p-q}}  \left( \int_{0}^{|\Om|} t^{(p - \frac{p}{N})} | {u^*}^{\prime}(t)|^p \,  \dt\right)^{\frac{q}{p}},$$
for some $C=C(N,p,q)>0$.
 
\noi $\bm{\underline{q \in [1,p)}}$:  In this case, we calculate
\begin{align}
  A_2^{\ga}  & = \int_0^{|\Om|} \left( \int_0^s v(t) \, \dt  \right)^{\frac{\ga}{q}} \left( \int_s^{|\Om|} w(t)^{1- \p} \, \dt \right)^{\frac{\ga}{\q}} w(s)^{1-\p} \, \ds  \no \\
  & \le  C_1^{\frac{\ga}{\q}} \int_0^{|\Om|}  (s g^{**}(s))^{\frac{\ga}{q}} s^{\frac{1}{N(p-1)} \left( \frac{\ga(p-N)}{\q} - p(N-1)\right)} \, \ds, \label{m2}
\end{align}
where $\ga = \frac{qp}{p-q}$. Moreover,  $$\frac{\ga}{q} + \frac{1}{N(p-1)} \left( \frac{\ga(p-N)}{\q} - p(N-1)\right) =  \frac{qp}{N(p-q)}.$$ Therefore, using \eqref{m2}, $A_2 \le C_1^{\frac{1}{\q}} \norm{g}_{\al(p,q), \frac{p}{p-q}}^{\frac{1}{q}}$. Thus for $g\in L^{\al(p,q),\frac{p}{p-q}}(\Om)$, the Muckenhoupt condition ($(ii)$ of Proposition \ref{Mucken}) is satisfied, and hence using \eqref{Mucken2} we obtain \eqref{HSLo2.1}. 

\noi $\bm{\underline{q \in [p,p^*]}}$: In this case, 
\begin{align*}
A_3 & = \sup_{s\in (0,|\Om|)} \left( \int_{0}^s v(t) \,  \dt \right)^{\frac{1}{q}} \left( \int_{s}^{|\Om|} w(t)^{1 - \p} \,  \dt \right)^{\frac{1}{\p}} \\
& \le C_1 \sup_{s\in (0,|\Om|)} \left( s g^{**}(s) \right)^{\frac{1}{q}} s^{\frac{p-N}{Np}} =  C_1 \left( \sup_{s \in (0,|\Om|)} g^{**}(s) s^{\frac{N(p-q) + qp}{Np}} \right)^{\frac{1}{q}} = C_1 \|g \|_{ \al(p,q), \infty}^{\frac{1}{q}}.
\end{align*}
Now for $g \in L^{\al(p,q), \infty}(\Om)$ using \eqref{Mucken3} we obtain \eqref{HSLo2.1}. 
\end{proof}

\begin{proposition}\label{HSLo-1}
Let $N = p$ and $\Om$ be bounded. Let  
\begin{align*}
 X:= \left\{ \begin{array}{ll}
         L^{1,\frac{N}{N-q};\frac{q}{N'}}(\Om), & \quad  \text{for} \ \ q\in (0,1); \\
         L^{1, \frac{N}{N-q}; q-1}(\Om), & \quad  \text{for} \ \ q\in [1,N); \\
         L^{1, \infty;\frac{q}{N'}}(\Om), & \quad  \text{for} \ \ q \in [N,\infty). \\
             \end{array}\right. 
\end{align*}
If $g\in X$, then  there exists $C = C(N,q)>0$ such that 
\begin{align}\label{HSLo2.2}
    \displaystyle \int_0^{|\Om|} g^*(t) {u^*(t)}^q \,  \dt  \leq  C \norm{g}_{X}  \left( \int_{0}^{|\Om|} t^{N-1} | {u^*}^{\prime}(t) |^N \,  \dt \right)^{\frac{q}{N}}, \quad  \forall  \, u \in \c1(\Om).
\end{align}  
\end{proposition}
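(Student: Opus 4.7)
The plan is to mimic the proof of Proposition \ref{HSLo}, applying the Muckenhoupt conditions of Proposition \ref{Mucken} with the choice $v(s) = g^*(s)$, $w(s) = s^{N-1}$ and $p = N$. Since $u^*$ is absolutely continuous on $(0, |\Om|)$ and vanishes at $|\Om|$, one writes $u^*(s) = -\int_s^{|\Om|} {u^*}'(t)\,\dt$, so the left-hand side of \eqref{HSLo2.2} is precisely of the form appearing on the left of \eqref{Mucken1}--\eqref{Mucken3} with $f = {u^*}'$. The whole task then reduces to bounding the Muckenhoupt constants $A_1, A_2, A_3$ by $C(N,q) \, \norm{g}_X^{1/q}$ in the three respective regimes of $q$.

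The source of the Zygmund parameter in the statement is the ``dual'' integral: with $w(s) = s^{N-1}$ one has $w(s)^{1-N'} = s^{-1}$, hence
\begin{equation*}
\int_s^{|\Om|} w(t)^{1-N'} \, \dt = \log(|\Om|/s),
\end{equation*}
while $\int_0^s v(t)\,\dt = s\,g^{**}(s)$ as in Proposition \ref{HSLo}. This replacement of the polynomial factor $s^{(p-N)/(N(p-1))}$ of that proposition by a logarithm is exactly what forces the third parameter in each admissible space $X$.

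For $q \in (0,1)$ and $\gamma = \frac{Nq}{N-q}$, using $g^* \le g^{**}$ a direct substitution gives
\begin{equation*}
A_1^{\gamma} \le \int_0^{|\Om|} s^{\,q/(N-q)}\, \bigl(\log(|\Om|/s)\bigr)^{qN/(N'(N-q))} (g^{**}(s))^{N/(N-q)}\,\ds,
\end{equation*}
which, after the elementary bound $\log(|\Om|/s) \le \log(e|\Om|/s)$, is exactly the $\frac{N}{N-q}$-th power of $\norm{g}_{L^{1,N/(N-q);q/N'}(\Om)}$; hence $A_1 \le C \norm{g}_X^{1/q}$ and \eqref{Mucken1} yields \eqref{HSLo2.2}. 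For $q \in [1,N)$ the same substitutions give
\begin{equation*}
A_2^{\gamma} = \int_0^{|\Om|} s^{\,q/(N-q)}\, \bigl(\log(|\Om|/s)\bigr)^{N(q-1)/(N-q)} (g^{**}(s))^{N/(N-q)}\,\ds,
\end{equation*}
using $\gamma/q = N/(N-q)$ and $\gamma/q' = N(q-1)/(N-q)$; this matches $\norm{g}_{L^{1,N/(N-q);q-1}(\Om)}^{N/(N-q)}$, and \eqref{Mucken2} concludes the case. For $q \in [N,\infty)$ one directly has
\begin{equation*}
A_3 = \sup_{0<t<|\Om|}\, (t\,g^{**}(t))^{1/q}\, \bigl(\log(|\Om|/t)\bigr)^{1/N'} \le \norm{g}_{L^{1,\infty;q/N'}(\Om)}^{1/q},
\end{equation*}
and \eqref{Mucken3} finishes the proof.

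The only real obstacle is a bookkeeping one: verifying in each of the three cases that the exponents of $s$, of $\log(|\Om|/s)$, and of $g^{**}(s)$ appearing in $A_i^{\gamma}$ (or in $A_3^q$) agree with those in the corresponding Lorentz--Zygmund norm $\norm{g}_X$. This requires nothing beyond the identities $1+\gamma/N = \gamma/q = N/(N-q)$ and $\gamma/q' = N(q-1)/(N-q)$, which follow from $\gamma = Nq/(N-q)$. No new analytic idea beyond the strategy of Proposition \ref{HSLo} is needed; the trichotomy in the statement is dictated precisely by the three cases of the Muckenhoupt condition.
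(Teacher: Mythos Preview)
Your proposal is correct and follows exactly the paper's approach: the paper sets $f = {u^*}'$, $v = g^*$, $w(t) = t^{N-1}$ in the Muckenhoupt conditions, computes $\int_s^{|\Om|} w(t)^{1-N'}\,\dt = \log(|\Om|/s)$, and bounds the resulting $A_i$ by the corresponding Lorentz--Zygmund norm of $g$. The paper only writes out the case $q \in [N,\infty)$ explicitly and dismisses the other two ranges as ``similar''; your proposal supplies the details for all three cases, with the correct exponent bookkeeping.
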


\begin{proof}
We only consider the case where $q \in [N, \infty)$. For the other cases proof follows using similar set of arguments. As before, we set $f = {u^*}^{\prime}, v = g^*$ and $w(t)= t^{N-1}.$ We see that $ \int_s^{|\Om|} {w(t)}^{1 - N'} \,  \dt \le \log (\frac{e|\Om|}{s})$, and compute
\begin{align*}
A_3  = \sup_{s\in (0,|\Om|)} \left( \int_{0}^s v(t) \,  \dt \right)^{\frac{1}{q}} & \left( \int_{s}^{|\Om|} w(t)^{1 - N'} \,  \dt \right)^{\frac{1}{N'}}  \le \sup_{s\in (0,|\Om|)} \left( s g^{**}(s) \right)^{\frac{1}{q}} \left( \log \left(\frac{e|\Om|}{s} \right)  \right)^{\frac{1}{N'}} \\
& =  \left( \sup_{s\in (0,|\Om|)} s g^{**}(s) \left( \log \left(\frac{e|\Om|}{s} \right)  \right)^{\frac{q}{N'}}  \right)^{\frac{1}{q}} = \|g \|_{ 1, \infty,\frac{q}{N'}}^{\frac{1}{q}}.
\end{align*}
Thus for $g\in L^{1, \infty;\frac{q}{N'}}(\Om)$,  the Muckenhoupt condition ($(iii)$ of Proposition \ref{Mucken}) is satisfied. Therefore, using \eqref{Mucken3} we obtain \eqref{HSLo2.2}.
\end{proof}

In the following theorem we provide simple alternate proofs for the Lorentz-Sobolev embedding ($N>p$) and the Brezis-Wainger embedding ($N=p$).

\begin{theorem}\label{embedding} 
Let $\Om$ be an open set in $\R^N$ and $p \in (1,N].$
\begin{enumerate}[(i)]
    \item \textbf{The Lorentz-Sobolev embedding}:  Let $N>p.$ Then $\Dp\hookrightarrow L^{p^*,p}(\Om),$ i.e., there exists $C = C(N , p) > 0$ such that 
  \begin{align}\label{Embedd1}
      \norm{u}_{p^*, p} \le C \norm{u}_{\Dp}, \quad \forall  \, u \in \Dp.
  \end{align}
  \item \textbf{The Brezis-Wainger embedding}: Let $N=p$ and $\Om$ be  bounded. Then $\Dp \hookrightarrow L^{\infty,p,-1}(\Om)$, i.e., 
  there exists $C = C(p) > 0$ such that 
  \begin{align}\label{Embedd1.1}
      \norm{u}_{\infty,p,-1} \le C \norm{u}_{\Dp}, \quad \forall  \, u \in \Dp.
  \end{align}
\end{enumerate}
\end{theorem}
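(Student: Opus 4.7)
\textbf{Proof proposal for Theorem \ref{embedding}.}

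My plan is to deduce both embeddings by combining the P\'olya--Szeg\"o inequality (Proposition \ref{HL and PS}(b)) with the one-dimensional weighted Hardy inequality in the equal-exponent case $q=p$ supplied by Muckenhoupt's criterion (Proposition \ref{Mucken}(iii)). Throughout, it suffices to work with $u\in\c1(\Om)$; extension to $\Dp$ follows by density and the continuity of the rearrangement map. Extending $u$ by zero to $\R^N$ preserves the decreasing rearrangement $u^*$, and since $u^*$ is absolutely continuous on $(0,|\Om|)$ with $u^*(t)\to 0$ as $t\to|\Om|^-$, we have the representation $u^*(t)=\int_t^{|\Om|}|{u^*}'(s)|\,\ds$.

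For part (i), I first observe that $|u|_{p^*,p}^p=\int_0^{|\Om|}t^{-p/N}u^*(t)^p\,\dt$, using the identity $p/p^*-1=-p/N$. I then apply Proposition \ref{Mucken}(iii) with $q=p$, $b=|\Om|$, $v(t)=t^{-p/N}$, $w(t)=t^{p-p/N}$, and $f(t)=|{u^*}'(t)|$. The weight integrals are explicit: $\int_0^t v=\frac{N}{N-p}t^{1-p/N}$ and $\int_t^{|\Om|}w^{1-p'}\le\frac{N(p-1)}{N-p}t^{(p-N)/(N(p-1))}$. A direct exponent computation shows the two factors combine to a constant, so $A_3=A_3(N,p)<\infty$. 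Muckenhoupt therefore gives
$$|u|_{p^*,p}\le C(N,p)\left(\int_0^{|\Om|}s^{p-p/N}|{u^*}'(s)|^p\,\ds\right)^{1/p},$$
and the right-hand side is bounded by $C(N,p)\|\Gr u\|_p$ by P\'olya--Szeg\"o. The equivalence $|u|_{p^*,p}\asymp\|u\|_{p^*,p}$ (the passage from $u^*$ to $u^{**}$) completes part (i).

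For part (ii), $N=p$ and the quasi-norm reads $|u|_{\infty,p,-1}^p=\int_0^{|\Om|}t^{-1}l_1(t)^{-p}u^*(t)^p\,\dt$. I again apply Proposition \ref{Mucken}(iii), now with $v(t)=t^{-1}l_1(t)^{-p}$ and $w(t)=t^{p-1}$. The substitution $\tau=l_1(s)$ gives $\int_0^t v(s)\,\ds=\frac{1}{p-1}l_1(t)^{1-p}$, while $\int_t^{|\Om|}w(s)^{1-p'}\,\ds=\log(|\Om|/t)\le l_1(t)$. Raised to the appropriate powers $1/p$ and $1/p'$, the logarithmic factors cancel in the product, so $A_3<\infty$ depends only on $p$. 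Combined with the P\'olya--Szeg\"o bound $\int_\Om|\Gr u|^p\ge p^p\om_p\int_0^\infty s^{p-1}|{u^*}'(s)|^p\,\ds$ and the equivalence $|u|_{\infty,p,-1}\asymp\|u\|_{\infty,p,-1}$, this yields \eqref{Embedd1.1}.

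The computations are elementary; the only delicate step is matching weights so that the P\'olya--Szeg\"o weight on the gradient side equals $w(s)$ exactly and the factors in $A_3$ cancel. This is the sole piece of bookkeeping, and in both cases the cancellation is forced by the scaling of the target function space ($s^{-p/N}$ versus $s^{p-p/N}$ in case (i); the logarithmic weight $l_1^{-p}$ paired with $t^{-1}$ on one side and $s^{p-1}$ on the other in case (ii)).
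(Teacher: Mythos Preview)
Your proof is correct and follows essentially the same approach as the paper: reduce to a one-dimensional weighted Hardy inequality via the P\'olya--Szeg\"o inequality, then verify Muckenhoupt's $A_3$ condition (Proposition~\ref{Mucken}(iii)) for the relevant pair of weights. The paper routes the Muckenhoupt step through the intermediate Propositions~\ref{HSLo} and~\ref{HSLo-1} applied to specific choices of $g$ (namely $g(x)=|x|^{-p}$ in~(i) and a piecewise logarithmic weight in~(ii)), whereas you apply Muckenhoupt directly to $v(t)=t^{-p/N}$ and $v(t)=t^{-1}l_1(t)^{-p}$; the computations are the same up to constants, and your version is slightly more streamlined.
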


\begin{proof} 
$(i)$ For $g = |x|^{-p} \in  L^{\frac{N}{p}, \infty}(\RN)$, from $(i)$ of Example \ref{ex rearrangement}, we have $g^*(t) = \left( \frac{\om_N}{t} \right)^{\frac{p}{N}}$ and $\norm{g}_{\frac{N}{p}, \infty}= \frac{N \om_N^{\frac{p}{N}}}{N-p}$. Then using \eqref{HSLo2.1},
 \begin{align*}
      \displaystyle \int_0^{\infty} t^{- \frac{p}{N}} {u^*(t)}^p  \, \dt  \le  C  \int_{0}^{\infty} t^{p - \frac{p}{N}} | {u^*}^{\prime}(t) |^p \, \, \dt, \quad \forall  \, u \in \mathcal{D}^{1,p}_0(\R^N),
 \end{align*}
where $C= C(N,p)>0$. Notice that the left hand side of the above inequality is $|u|_{p^*, p}$  and it is equivalent to $\norm{u}_{p^*, p}$. Thus from the  P\'{o}lya-Szeg\"{o} inequality (Proposition \ref{HL and PS}),  we obtain
\begin{align*}
    \norm{u}_{p^*, p} \le C  \norm{u}_{\dpR}, \quad \forall  \, u \in \dpR.
\end{align*}
For $u\in \Dp,$ where $\Om$ is an open set in $\R^N$, one can consider the zero extension of $u$ to $\R^N$ to get \eqref{Embedd1}.

\noi $(ii)$ Let $\Om=B_R(0)$ where $R>0$. We consider the following function
\begin{align*}
    g(x) = \left\{ \begin{array}{ll}
         |x|^{-p}\left( \log  \left( e \left(\frac{R}{|x|} \right)^p \right) \right)^{-p}, & \quad  \text{ for }  x \in B_{R_1}(0); \\
          (p|x|)^{-p}, & \quad  \text{ for }  x \in B_{R}(0) \setminus B_{R_1}(0), \\
              \end{array}\right.
\end{align*}
where $R_1 = R e^{\frac{1-p}{p}}$. Since $g$ is radial and radially decreasing on $B_R(0)$, using Example \ref{ex rearrangement}, 
\begin{align*}
    g^*(t) = \left\{ \begin{array}{ll}
         \frac{\om_p}{t}  \left( \log  \left( \frac{e|\Om|}{t} \right) \right)^{-p}, & \quad  \text{ for }  t \in (0, |B_{R_1}(0)|); \\
         p^{-p} \frac{\om_p}{t}, & \quad  \text{ for }  t \in (|B_{R_1}(0)|,  |B_{R}(0)|). \\
              \end{array}\right.
\end{align*}
From $(ii)$ of Example \ref{ex rearrangement}, $g \in L^{1,\infty;p}(B_R(0))$. Hence using $(ii)$ of Proposition \ref{LZ prop} and $(ii)$ of Theorem \ref{Symmetrization}, we conclude that $g \in \mathcal{H}_{p,p}(B_R(0))$.  Moreover, from \eqref{HSLo2.2} and the P\'{o}lya-Szeg\"{o} inequality, there exists $C=C(p)>0$ such that 
 \begin{align*}
      & \displaystyle \int_0^{|B_R(0)|}   \left(\frac{{u^*(t)}}{\log(\frac{e|B_R(0)|}{t})} \right)^p  \, \frac{\dt}{t} \\ & \le \int_0^{|B_{R_1}(0)|}  \left(\frac{{u^*(t)}}{\log(\frac{e|B_R(0)|}{t})} \right)^p  \, \frac{\dt}{t} + \int_{|B_{R_1}(0)|}^{|B_{R}(0)|} \frac{\left( u^*(t) \right)^p}{t} \, \dt \\
      & \le C \int_0^{|B_{R}(0)|} g^*(t) (u^*(t))^p \, \dt \\
      & \le C \int_{B_R(0)} |\Gr u(x)|^p \, \dx, \quad \forall \, u \in \D^{1,p}_0(B_R(0)),
\end{align*}
where the first inequality holds since $(\log(\frac{e|B_R(0)|}{t}))^{-1} \le 1$ for $t\le |B_R(0)|$. Notice that, the left hand side of the above inequality is $|u|_{\infty,p; -1}$ (equivalent to $\norm{u}_{\infty,p; -1}$). Therefore, 
\begin{align*}
    \norm{u}_{\infty,p; -1} \le C  \norm{u}_{\D^{1,p}_0(B_R(0))}, \quad \forall  \, u \in \D^{1,p}_0(B_R(0)).
\end{align*} 
Furthermore, every bounded open set $\Om$ is contained in $B_R(0)$ for some $R>0.$  Thus the extension by zero to $B_R(0)$ together with above inequality yields \eqref{Embedd1.1}.
\end{proof}
\section*{Acknowledgement}
The corresponding author acknowledges the Department of Science \& Technology, India, for the research grant DST/INSPIRE/04/2014/001865. We thank the anonymous reviewers for their valuable comments that have improved this article.

\bibliographystyle{plainurl}

\end{document}